\theoremstyle{plain}
\newtheorem{thm}{Theorem}[section]
\newtheorem{theorem}[thm]{Theorem}
\newtheorem{corollary}[thm]{Corollary}
\newtheorem{proposition}[thm]{Proposition}
\theoremstyle{definition}
\newtheorem{remark}[thm]{Remark}
\newtheorem{definition}[thm]{Definition}
\newtheorem{example}[thm]{Example}
\newtheorem{conjecture}[thm]{Conjecture}
\newtheorem{question}[thm]{Question}
\numberwithin{equation}{section}
\newcommand{\sA}{{\mathcal A}}
\newcommand{\sB}{{\mathcal B}}
\newcommand{\sF}{{\mathcal F}}
\newcommand{\sG}{{\mathcal G}}
\newcommand{\sH}{{\mathcal H}}
\newcommand{\sT}{{\mathcal T}}
\newcommand{\sU}{{\mathcal U}}
\newcommand{\sV}{{\mathcal V}}
\newcommand{\PP}{\ensuremath{\mathbb{P}}}
\newcommand{\CC}{\ensuremath{\mathbb{C}}}
\newcommand{\RR}{\ensuremath{\mathbb{R}}}
\newcommand{\ZZ}{\ensuremath{\mathbb{Z}}}
\newcommand{\QQ}{\ensuremath{\mathbb{Q}}}
\newcommand{\hol}{\ensuremath{\mathcal{O}}}
\newenvironment{dedication}
        {\begin{quotation}\begin{center}\begin{em}}
        {\par\end{em}\end{center}\end{quotation}}
\newcommand\om{\omega}
\newcommand\la{\lambda}
\newcommand\al{\alpha}
\newcommand\be{\beta}
\newcommand\Ga{\Gamma}
\newcommand\De{\Delta}
\newcommand\ga{\gamma}
\newcommand\e{\epsilon}
\newcommand{\Lam}{\Lambda}
\newcommand{\ra}{\ensuremath{\rightarrow}}
\def\eea{\end{eqnarray*}}
\def\bea{\begin{eqnarray*}}
\newcommand\dual{\mathrel{\raise3pt\hbox{$\underline{\mathrm{\thinspace d
\thinspace}}$}}}
\newcommand\qe{\ifhmode\unskip\nobreak\fi\quad $\Box$}       
\def\BOX{\hfill\lower.5\baselineskip\hbox{$\Box$}}
\newtheorem{theo}{Theorem}[section]
\newtheorem{remarkk}[theo]{Remark}
\title [Vanishing Chern classes, Manifolds Isogenous to a Torus Product.]{ Manifolds with  trivial Chern classes II:   Manifolds Isogenous to a Torus Product,  coframed Manifolds and a question by Baldassarri.}
\author{Fabrizio Catanese}
\address {Mathematisches Institut der Universit\"at Bayreuth\\
NW II,  Universit\"atsstr. 30\\
95447 Bayreuth}
\email{fabrizio.catanese@uni-bayreuth.de}
\address{  Korea Institute for Advanced Study, Hoegiro 87, Seoul, 
133--722.}
\thanks{AMS Classification: 14F, 14K, 14C25\\
 }
 \date{\today}
\begin{document}

\begin{abstract}
Mario Baldassarri asked in  1956  to describe the projective manifolds with last k Chern classes trivial  in real  cohomology.
He claimed that the solutions are Roth's  Pseudo-Abelian Varieties, while indeed the class of solutions is larger,
it includes the class introduced here of Manifolds Isogenous to a k-Torus Product: these have also vanishing Chern numbers,
and   in dimension 2    are all the solutions with $K_X$ nef.

We show that such a simple  picture  does not hold  in higher dimension.

Other solutions to Baldassarri's question are  the manifolds isogenous to k-framed or k-coframed 
 manifolds, that we  investigate here:
the  k- framed projective manifolds with   $K_X$  nef are    the Pseudo-Abelian varieties.
We show   results for  the    k-coframed  manifolds,  pose   open questions and conjectures,
treat also  the non projective case.
\end{abstract}

\maketitle

\begin{dedication}
In   memory of   Mario Baldassarri (1920-1964).
\end{dedication}

\tableofcontents

\section{Foreword and extended summary}

 Motivated by a  general question  addressed by Mario Baldassarri in  1956, we discuss characterizations of 
the Pseudo-Abelian Varieties introduced by Roth, and we introduce a first new notion, of Manifolds Isogenous to a k-Torus Product:
the latter have the  last k Chern classes trivial  in rational cohomology and vanishing Chern numbers.

We show that in dimension 2  the latter class  is  the correct substitute for some incorrect assertions by Enriques, Dantoni, Roth and Baldassarri: these are the surfaces with $K_X$ nef and $c_2(X)=0 \in H^4(X, \ZZ)$.

We observe in the last section, using a construction by Chad Schoen,   that such a simple similar picture  does not hold  in higher dimension.

We discuss then, as a class of solutions to Baldassarri's question, manifolds isogenous to
 projective (respectively: K\"ahler) manifolds whose tangent bundle or whose  cotangent bundle has a trivial subbundle
 of positive rank.
 
  We see that the  class of `partially framed' projective manifolds (that is,  whose tangent bundle  has a trivial subbundle) consists,
 in the case where $K_X$ is nef, of  the Pseudo-Abelian varieties of Roth;
while the  class of `partially coframed' projective manifolds is not yet  fully understood in spite of the 
new results that we are able to show here: and we formulate some open questions and conjectures.

In the course of the paper  we address also the  case of more general compact complex Manifolds,
introducing the new notions of suspensions over parallelizable Manifolds,  of  twisted hyperelliptic Manifolds, and we describe
 the known results  under the  
K\"ahler assumption.

\section{Introduction.}

Our  first  motivation was to  understand   a question posed in Mario Baldassarri 's paper \cite{baldassarri},
which   aimed at determining the complex projective Manifolds whose   last  Chern classes 
are zero in rational cohomology, namely  $c_i(X) = 0 \in H^{2i} (X, \QQ )$ for  $ i \geq k+1$.
He made the wrong claim  that these manifolds  are exactly the  Pseudo-Abelian varieties introduced by Roth \cite{rothPAV}, hence his work also suggested to have a better understanding   of the possible characterizations of the Pseudo-Abelian varieties.
\smallskip

Baldassarri's  claim 
 arose from the attempt to generalize  to higher dimensions  an  incorrect assertion concerning algebraic surfaces (deriving from a minor mistake in the Castelnuovo-Enriques classification theorem of  algebraic surfaces). 
 
 After clarifying the surface case,  we explain here  the state of the art in all dimensions, prove some new results,  define new classes of compact complex Manifolds, and pose some new questions.
  \bigskip

A first  basic principle that Baldassarri and his precedessors missed to consider (as 
the use of topology was not sufficiently established at the time) was the
following:
 
 \begin{remark}\label{isogeny}
 {\bf (Isogeny principle):} If we have a finite unramified map between two compact complex Manifolds
 $ f : Z \ra X$, then $c_i(Z) = 0 \in H^{2i}(Z, \QQ)$ if and only 
 if $c_i(X) = 0 \in H^{2i}(X, \QQ)$. 
 
 Defining {\bf isogeny} between Manifolds as the equivalence relation generated by the existence
 of such finite unramified maps, we see that the set of Manifolds which are solutions to Baldassarri's question consists of 
 a union of isogeny classes.
 
 \end{remark} 
 
  Baldassarri's  problem has been  solved in the extremal case where $k=0$: the characterization of the compact K\"ahler Manifolds with all Chern classes zero in rational (or real) cohomology 
  was solved  in 1978, thanks to Yau's celebrated theorem \cite{yau} on the existence of 
 K\"ahler-Einstein metrics on Manifolds with $c_1 (X) = 0 \in H^2(X, \QQ)$.
  From this, as explained in Kobayashi's book, page 116 of \cite{kobayashi},  follows that the 
compact  K\"ahler manifolds   (cKM) with $c_1(X) = c_2(X)= 0 \in H^*(X, \QQ)$
are the ones isogenous to complex tori
 (they are the so-called {\bf Hyperelliptic Manifolds}, quotients $X = T/G$
 of a complex torus $T$ by a finite group $G$ acting freely on $T$).
 Indeed, once one knows that we have a  K\"ahler-Einstein metric, that the manifold with $c_1(X) = c_2(X)= 0 \in H^*(X, \QQ)$
is flat was proven by Apte
\cite{apte} in the 50's.

 Hence, more precisely, we have:

 \begin{theorem}\label{zerochern}(Yau-Apte) 
  A compact K\"ahler manifold $X$
 such that $ c_1(X)= 0,  c_2(X) = 0, $ in $ H^*(X, \RR)$, is a  Hyperelliptic manifold. 
  \end{theorem}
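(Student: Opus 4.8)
The plan is to use Yau's solution of the Calabi conjecture to equip $X$ with a Ricci-flat Kähler metric, then exploit a curvature formula for the second Chern class to show that this metric is in fact flat, and finally invoke the Bieberbach theorems together with the parallelism of the complex structure to present $X$ as a free quotient of a complex torus. For the first step, since $c_1(X)=0$ in $H^2(X,\RR)$, Yau's theorem \cite{yau} provides, in a fixed Kähler class $[\om]\in H^2(X,\RR)$, a Kähler metric $g$ whose Ricci form vanishes identically; fix this Ricci-flat metric and denote its Kähler form again by $\om$.

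The key step is a Chern--Weil computation. For a Ricci-flat Kähler metric the closed form representing $c_2(X)$ reduces to $\tfrac{1}{8\pi^2}\,\mathrm{tr}(R\wedge R)$, since the $c_1^2$ contribution vanishes; wedging with $\om^{n-2}$ and using the Kähler identities yields, up to a fixed positive constant $c$, the pointwise identity
\[
c_2(X)\wedge\om^{n-2} \;=\; c\,\lvert R\rvert^2\,\om^{n},
\]
where $\lvert R\rvert$ is the norm of the curvature tensor of $g$ (the Kähler analogue of the Chern--Gauss--Bonnet/Apte formula). Integrating gives $\int_X c_2(X)\wedge\om^{n-2}=c\int_X\lvert R\rvert^2\,\om^{n}\ge 0$, with equality precisely when $g$ is flat. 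The hypothesis $c_2(X)=0\in H^4(X,\RR)$ forces the left-hand side to vanish, so $g$ is a flat Kähler metric on $X$.

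It then remains to run classical flat-geometry arguments. A compact flat Riemannian manifold has finite holonomy and, by Bieberbach's theorems, a finite-index normal subgroup $\Lambda\le\pi_1(X)$ acting on the universal cover $\widetilde X\cong\RR^{2n}$ by translations. Since $g$ is Kähler and flat, the complex structure $J$ is parallel, hence translation-invariant on $\widetilde X$, so $\widetilde X\cong\CC^n$ canonically and $\Lambda$ is a lattice of translations; thus $T:=\CC^n/\Lambda$ is a complex torus, and $G:=\pi_1(X)/\Lambda$ is a finite group acting freely --- freeness being inherited from the free deck action of $\pi_1(X)$ --- by affine biholomorphisms, with $X=T/G$. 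Hence $X$ is a Hyperelliptic manifold.

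The hard part is the curvature identity in the middle step: one must nail down the correct sign and check that, for Ricci-flat Kähler metrics, the top-degree form $c_2(X)\wedge\om^{n-2}$ is a nonnegative multiple of $\lvert R\rvert^2\,\om^{n}$, vanishing exactly in the flat case. It is precisely this saturated Chern-number inequality that converts the purely cohomological hypothesis $c_2(X)=0$ into the metric conclusion of flatness; the other ingredients (Yau's theorem, Bieberbach's structure theory) enter only as black boxes.
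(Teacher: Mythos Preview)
Your argument is correct and is precisely the standard route: Yau's Ricci-flat metric, the Apte-type identity $c_2\wedge\om^{n-2}=c\,|R|^2\,\om^n$ in the Ricci-flat case, hence flatness, and then Bieberbach plus parallelism of $J$. The paper does not actually supply a proof of this statement; it merely records the result and attributes it to Yau, pointing to \cite{yau} for the existence of the K\"ahler--Einstein metric (and the Apte reference \cite{apte} sits in the bibliography for the Chern-class inequality you use). So your write-up fills in exactly the details the paper leaves implicit, and there is no alternative argument in the paper to compare against.

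One small point worth making explicit in your last paragraph: the freeness of the $G$-action on $T=\CC^n/\Lambda$ is not literally ``inherited'' from the freeness on $\CC^n$; rather, a fixed point of $[\gamma]\in G$ on $T$ would mean $\lambda^{-1}\gamma$ fixes a point of $\CC^n$ for some $\lambda\in\Lambda$, forcing $\lambda^{-1}\gamma=\mathrm{id}$ since the deck action is free, hence $\gamma\in\Lambda$. You clearly have this in mind, but spelling it out avoids any ambiguity.
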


\medskip
What happens for $k \geq 1$?

The first significant case is  the case where  $n : = dim(X)=2, k=1$.

Already to describe  this case we need some definition: a  compact K\"ahler Manifold
$X$ is said to be a {\bf torus product} iff $ X \cong  T \times Y$, where $T$ is a complex torus
of dimension $ \geq 1$.

Whereas one says that $X$ is {\bf strongly isogenous to a torus product }
if $ X = (T \times Y)/G$, where $G$ acts on $T, Y$
and freely by its  diagonal action on $T \times Y$ (we shall see later that, if $X$ is   a compact K\"ahler 
non uniruled Manifold, the above two notions are equivalent).

The case of Roth's {\bf Pseudo Abelian Varieties } is the special case 
where $ X = (T \times Y)/G$, $X$ is projective, and moreover $G$ acts on $T$
as a group of translations.

 In the case   $n =2, k=1$,    there is  a complete answer, which we find 
 important to state here for historical and  conceptual reasons (see Theorems \ref{crucial}, \ref{detail} and  \ref{dantoni} for more detailed statements):

 \begin{theorem}\label{1-dantoni}
 Let $X$ be a  compact K\"ahler surface  with  $c_2(X)=0 \in H^4(X, \ZZ)$. Then:
 
 \begin{enumerate}
 \item 
 if $K_X$ is nef,  then $X$ is strongly isogenous to a torus product, namely $ X = (T \times Y)/G$,
 where $G$ acts freely via a diagonal action on $ T \times Y$;
 \item
 if $X$ is a minimal surface, but $K_X$ is not nef, then $X$ is a $\PP^1$-bundle
 over an elliptic curve $E$;
 \item
 if $X$ is non minimal, then $X$ is a blow up of a $\PP^1$-bundle
 over  a curve $C$ of genus $ g \geq 2$.

 \end{enumerate} 
In the first two cases $c_1(X)^2 = c_2(X)=0$, while
in the last case $ \chi(X) = 1-g$, hence $ c_1(X)^2 = K_X^2 < 0$.
 
 If $c_1(X)=0 \in H^2(X, \RR)$, then $S$ is either a complex torus, or 
 a hyperelliptic surface.
 
 If $c_1(X)=0 \in Pic(X)$, then $X$ is a complex torus.

 \end{theorem}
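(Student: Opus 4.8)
The plan is to translate the hypothesis topologically and then run through the Enriques--Kodaira classification of the K\"ahler surface $X$. Since $X$ is a compact complex surface, $c_2(X)\in H^4(X,\ZZ)\cong\ZZ$ equals the topological Euler number $e(X)$, so the hypothesis is precisely $e(X)=0$. I would keep at hand: Noether's formula $12\chi(\Oh_X)=c_1(X)^2+c_2(X)$, which under our hypothesis becomes $12\chi(\Oh_X)=K_X^2$; the additivity $e(X)=e(B)e(F)+\sum_s\bigl(e(f^{-1}(s))-e(F)\bigr)$ for a fibration $f\colon X\to B$ with general fibre $F$; and the fact that a blow-up raises $e$ by $1$ and lowers $K_X^2$ by $1$. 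The proof then splits according to $\kappa(X)\in\{-\infty,0,1,2\}$ and to whether $X$ is minimal, the K\"ahler hypothesis being used to discard the non-K\"ahler surfaces (class VII, Kodaira surfaces) at the relevant points.

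First the cases that do not survive, or that land in (2)--(3). If $\kappa(X)=2$, then $X$ is of general type and $c_2(X)=e(X)>0$ (e.g. Bogomolov--Miyaoka--Yau gives $c_1^2\le 3c_2$, together with $c_1^2>0$), so these are excluded; and since the minimal model of any surface with $\kappa\ge 0$ has $e\ge 0$ (immediate from the list of values when $\kappa=0$, and from Euler additivity when $\kappa=1$) while blow-ups strictly increase $e$, no \emph{non-minimal} surface with $\kappa\ge 0$ has $e=0$. If $\kappa(X)=0$, the classification of minimal K\"ahler surfaces leaves only $K3$ ($e=24$), Enriques ($e=12$), complex tori ($e=0$) and bielliptic surfaces ($e=0$): so $X$ is a complex torus or a bielliptic surface, and both are tautologically strongly isogenous to a torus product --- the torus being $T=T\times\{\mathrm{pt}\}$, and a bielliptic surface being $(E_1\times E_2)/G$ with $G$ acting by translations on $E_1$, with $E_2/G\cong\PP^1$, the diagonal action free; here also $c_1(X)^2=K_X^2=0$. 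If $\kappa(X)=-\infty$, then $X$ is rational or ruled; a rational surface has $e\ge 3$, so $X$ is ruled over a curve $C$ of genus $g\ge 1$ whose minimal model is a $\PP^1$-bundle over $C$, and $0=e(X)=4(1-g)+r$ with $r\ge 0$ the number of blow-ups. If $r=0$ then $g=1$ and $X$ is a $\PP^1$-bundle over an elliptic curve (with $K_X$ not nef, since a ruling fibre $\ell$ has $K_X\cdot\ell=-2$, and with $K_X^2=0$, $c_2(X)=0$): case (2). If $r>0$ then $g\ge 2$, $X$ is a blow-up of a $\PP^1$-bundle over $C$, $\chi(\Oh_X)=1-g$, and $c_1(X)^2=K_X^2=8(1-g)-r=12(1-g)<0$: case (3). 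In particular $K_X$ nef forces $\kappa(X)\ge 0$, so in case (1) only $\kappa(X)\in\{0,1\}$ occurs.

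The heart of the matter is $\kappa(X)=1$ with $K_X$ nef (hence $X$ minimal). Let $f\colon X\to B$ be the elliptic (Iitaka) fibration; the general fibre $F$ is smooth of genus $1$, so $e(F)=0$ and $e(X)=\sum_s e(f^{-1}(s))$. Inspecting Kodaira's list of (possibly multiple) fibre types, every special fibre has Euler number $\ge 1$ except a multiple fibre ${}_mF_0$ with $F_0$ a smooth elliptic curve, which contributes $0$. Hence $e(X)=0$ forces every special fibre to be a multiple of a smooth elliptic curve; in particular the $j$-invariant of the family is a morphism $j\colon B\to\PP^1$ missing $\infty$, hence constant, so $f$ is isotrivial. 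I would then invoke the structure theory of relatively minimal isotrivial elliptic surfaces: such an $X$ is of the form $(E\times\widetilde C)/G$, where $E$ is the general fibre, $\widetilde C\to B=\widetilde C/G$ is the Galois cover of group $G$ branched exactly over the points carrying the multiple fibres, $G$ acts on $E$ through $\Aut(E)=E\rtimes\mu$ (with $\mu$ finite cyclic of order $2$, $4$ or $6$) so that the stabiliser of each ramification point of $\widetilde C$ acts on $E$ by fixed-point-free automorphisms --- i.e. by nonzero translations --- and $G$ acts diagonally and freely on the product $E\times\widetilde C$, the freeness being exactly what makes $(E\times\widetilde C)/G$ smooth and consistent with $e(X)=0$. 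This exhibits $X$ as strongly isogenous to the torus product $E\times\widetilde C$; moreover $\kappa(X)=1$ is equivalent to the base orbifold $\bigl(B;(m_i)\bigr)$ having negative Euler characteristic, i.e. to $g(\widetilde C)\ge 2$, and $K_X^2=0$ by the canonical bundle formula, which completes case (1). I expect this invocation of the structure theory to be the main obstacle to spell out in full: one must produce the explicit presentation $X\cong(E\times\widetilde C)/G$ with $G$ acting diagonally and \emph{freely} on the product, in particular handling the ramification points of $\widetilde C\to B$ --- where $G$ does not act freely on $\widetilde C$ alone, but the nonzero translations on $E$ restore freeness on the product --- and ruling out any twisting that would obstruct writing the fibrewise automorphisms as a single base-independent $G$-action.

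Finally the closing assertions. If $c_1(X)=0\in H^2(X,\RR)$, then $K_X$ is numerically trivial, so $X$ is minimal with $K_X^2=0$ and $\kappa(X)\le 0$; since $\kappa(X)=-\infty$ would make $K_X$ non-nef, $\kappa(X)=0$, and by the analysis above $X$ is a complex torus or a bielliptic (``hyperelliptic'') surface --- this in particular recovers Theorem \ref{zerochern} in dimension $2$. If moreover $c_1(X)=0\in\Pic(X)$, i.e. $K_X\cong\Oh_X$, then $X$ cannot be bielliptic, since bielliptic surfaces have $K_X$ of torsion order $2$, $3$, $4$ or $6$; hence $X$ is a complex torus.
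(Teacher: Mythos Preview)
Your proposal is correct and follows essentially the same route as the paper: translate $c_2=0$ into $e(X)=0$ and run through the Enriques--Kodaira classification, with the $K_X$ nef case being the substantive one. The paper does not spell out the $\kappa=1$ argument either; it packages it as Theorem~\ref{crucial}, citing \cite{bea}, \cite{fcbl}, \cite{time} for the statement that $K_S$ nef and $e(S)=0$ force $S$ to be strongly isogenous to a torus product, and then handles the ruled and non-minimal cases by the same short computations you give (see Proposition~2.3 and Corollary~2.4). Your sketch of the $\kappa=1$ case via isotriviality (all singular fibres are multiples of smooth elliptic curves, hence constant $j$, hence $(E\times\widetilde C)/G$) is exactly the content of those references, and you are right to flag the passage to a global diagonal free $G$-action as the point requiring care; the paper's more detailed Theorem~2.2 records precisely which quotients arise. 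The closing assertions on $c_1=0$ are handled identically.
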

 There are three features in our above theorem for $n=2$:
 
 \begin{itemize}
 \item
 (I): in case (1), where $K_X$ is nef, $X$ need not be a Pseudo-Abelian variety;
 \item
 (II): since $\chi(\hol_X) \leq 0$, then $X$ always possesses a non zero holomorphic
 1-form $ \om \in H^0(\Omega^1_X)$;
 \item
 (III): if moreover $X$ is minimal, then $\om$ is everywhere non vanishing;
 \item
 (IV): if  $X$ is minimal, then $X$ is birationally covered by a  trivial family of 
 (positive dimensional) complex tori,
 that is, there exists a dominant rational map $\psi : T \times Y \dasharrow X$.

 \end{itemize}
 
 The first feature  (I) contradicts results of Dantoni \cite{dantoni} and 
  Enriques  \cite{enriques-pg0}, (indeed Enriques' error is also reproduced in the classification theorem of Castelnuovo and Enriques
\cite{classificazione}). These papers put Baldassarri and  Roth on the wrong track.

The second feature (II) is the one which, as we shall see,  fails to hold true in dimension $n \geq 3$ as a consequence of vanishing of the top Chern class; while, if (II) holds true, since $(-1)^n c_n(X)$ is the expected number of zeros of a holomorphic 1-form, then (III) looks plausible.

If one takes (III) 
  as an assumption,  that there exists a holomorphic 1-form without zeros,
 then Baldassarri's claim holds at least in dimension 3, as shown by \cite{haoschreieder1},  which indeed uses a much weaker assumption, namely the existence of a real closed 1-form without zeros (see also 
 a similar result in Theorem \ref{coframed}).
 
 The failure of (II) for $n \geq 3$ is due to work of Chad Schoen \cite{schoen},
 which  shows  that in dimension $3$ there are simply connected manifolds, actually Calabi Yau manifolds ($K_X$ trivial in $Pic^0(X)$)
 with $c_3(X)=0$.  
 
 Concerning the failure of feature (IV) for $n \geq 3$,
 our result  here is  that the Schoen threefolds are not birationally covered by a family of  isomorphic Abelian surfaces, see Proposition \ref{schoenstrong}, and  that they do not admit a fibration onto a surface  with general fibres 
 isomorphic to a fixed 
 elliptic curve $E$, see Proposition \ref{schoen}. 
 Hence  that they are also birationally far away from being isogenous 
 to a torus product.

The first conclusion that we reach  is therefore that,  in higher dimensions, the right
assumption to make  is   feature (III).

And the guiding observation is that  an easy  class of solutions to Baldassarri's question is  provided by the manifolds $X$ which 
are isogenous  {\bf to a partially framed or to a partially co-framed}  manifold $Z$.

 Our definition here is that $Z$ is  partially (tangentially) framed if  the tangent bundle $\Theta_Z$
admits a maximal trivial subbundle $\hol_X^k$, with $k>0$.

Whereas  $Z$ is  said to be partially co-framed (cotangentially framed) if the  cotangent bundle $\Omega^1_Z$ admits such a maximal trivial subbundle $\hol_X^k$, with $k>0$.

The first definition brings back into play Roth's Pseudo-Abelian Varieties,
see Theorem \ref{framed}, which contains   results also in the non projective case:

\begin{theorem}
A tangentially k-framed projective Manifold with $K_X$ nef is a pseudo-Abelian variety,
in particular $\Theta_X \cong \hol_X^k \oplus \sF$.

\end{theorem}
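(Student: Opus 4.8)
The plan is to reduce, using the non-uniruledness coming from $K_X$ nef, to the case of an essentially free torus action, and then to read off Roth's translation condition directly from the trivial subbundle. \emph{Step 1 (the automorphism group).} Since $K_X$ is nef, $X$ is not uniruled; hence $A:=\Aut^0(X)$ is a complex torus, because a positive-dimensional linear algebraic subgroup of $\Aut(X)$ would produce rational curves in its orbits, contradicting non-uniruledness. In particular $H^0(X,\Theta_X)=\mathrm{Lie}(A)$ is abelian, so the frame $\theta_1,\dots,\theta_k$ of the maximal trivial subbundle $\hol_X^k\subseteq\Theta_X$ consists of pairwise commuting, pointwise independent, nowhere vanishing vector fields; their joint flow generates a holomorphic action on $X$ of a positive-dimensional subtorus $T\subseteq A$ with $\mathrm{Lie}(T)\supseteq\langle\theta_1,\dots,\theta_k\rangle$, whose orbits are positive-dimensional tori everywhere tangent to the frame.

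\emph{Step 2 (product structure).} Here I would invoke the structure results established above: a non-uniruled compact K\"ahler manifold carrying such an essentially free torus action is strongly isogenous to a torus product, the second factor being (a resolution of) the leaf space $Y$ of the frame foliation; concretely, replacing $X$ within its isogeny class --- legitimate by Remark \ref{isogeny} --- we may write $X=(T\times Y)/G$ with $G$ finite, acting freely and diagonally, $\dim T=k$, and the frame corresponding to a basis of $\mathrm{Lie}(T)$. Pulling the $\theta_i$ back to $T\times Y$ gives $G$-invariant vector fields $\widetilde\theta_i=v_i\oplus\eta_i$ with $v_i\in\mathrm{Lie}(T)$ and $\eta_i\in H^0(\Theta_Y)$; writing the $G$-action on $T$ as $t\mapsto\rho(g)t+c_g$ with $\rho(g)\in\Aut(T,0)$ and $c_g\in T$, the $G$-invariance of $\widetilde\theta_i$ forces $d\rho(g)v_i=v_i$ and $(g_Y)_*\eta_i=\eta_i$ for all $g,i$ (the two summands do not interact), so $v_i=\widetilde\theta_i-\eta_i$ is itself $G$-invariant, i.e. $\rho(g)$ fixes each $v_i$.

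\emph{Step 3 (translations and the splitting of $\Theta_X$).} Since the $v_i$ span $\mathrm{Lie}(T)$ and $d\rho(g)$ fixes all of them, $d\rho(g)=\mathrm{id}$, hence $\rho(g)=\mathrm{id}$ on $T$: $G$ acts on $T$ purely by translations. This is exactly Roth's condition, so $X$ is a pseudo-Abelian variety. (If the structure theory only delivered a decomposition in which $G$ acts on $T$ with $\rho$ nontrivial, one would first apply Maschke's theorem to the finite group $\rho(G)$ on $\mathrm{Lie}(T)$ --- the isotypic projectors being defined over the lattice of $T$, hence cutting out subtori --- to isogeny-split off the non-invariant part into the $Y$-factor, after which $G$ acts by translations on the surviving torus, which still contains all the $v_i$ and so is positive-dimensional.) Finally, the canonical, hence $G$-equivariant, decomposition $\Theta_{T\times Y}=\mathrm{pr}_T^{*}\Theta_T\oplus\mathrm{pr}_Y^{*}\Theta_Y=\bigl(\hol_{T\times Y}\otimes\mathrm{Lie}(T)\bigr)\oplus\mathrm{pr}_Y^{*}\Theta_Y$, on which $G$ acts trivially on the factor $\mathrm{Lie}(T)$ because it acts by translations, descends under the free quotient to $\Theta_X\cong\hol_X^{\dim T}\oplus\sF$; and $\dim T=k$ precisely because all of $\mathrm{Lie}(T)$ descends to everywhere independent global vector fields, while $\hol_X^k$ is the maximal trivial subbundle.

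\emph{Step 4 (where the difficulty is).} The genuine obstacle is Step 2: promoting the essentially free holomorphic torus action to a product decomposition up to isogeny. This is where K\"ahlerness and non-uniruledness are used in an essential way --- one must rule out non-split, Calabi--Eckmann/Blanchard type torus fibrations and show the fibration is, up to \'etale cover, isotrivial and then a product --- and it is the part inherited from the general structure theory; Steps 1 and 3 are then essentially formal once the bookkeeping identifying $\dim T$ with the rank $k$ of the maximal trivial subbundle is in place. For the non-projective part of Theorem~\ref{framed} one runs the same argument, keeping $T$ a torus but allowing $Y$ to be the appropriate K\"ahler building block.
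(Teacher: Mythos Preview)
Your approach and the paper's coincide through Step 1: $K_X$ nef forces $\Aut^0(X)$ to be a torus (Theorem \ref{f-l}), and the frame integrates to a torus action with equidimensional orbits. But the paper does not pass through an abstract ``strongly isogenous to a torus product'' statement and then recover translations afterward. Its Section 3 derives directly from Roth's condition (a torus $T$ acting with all orbits of dimension $\dim T$) that $X=(Y\times T)/G$ with $G$ \emph{already} acting on $T$ by translations: the orbit map is a principal $T$-bundle over an open set of the orbit space, so the monodromy centralizes $T$ and hence consists of translations, and projectivity makes this monodromy finite. The proof of Theorem \ref{framed} then simply invokes this together with \cite{amoros}, \cite{lieberman1}. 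So once Step 1 produces Roth's hypothesis, the pseudo-Abelian conclusion and the splitting are immediate.

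Your Step 3 is therefore a detour, and it contains a real gap. You assert that the $v_i$ span $\mathrm{Lie}(T)$ and that $\dim T=k$, but neither is justified; a priori the components $\eta_i\in H^0(\Theta_Y)$ could be nonzero and the $v_i$ linearly dependent, in which case your invariance argument does not force $\rho(g)=\mathrm{id}$ (and the Maschke backup only yields a \emph{positive}-dimensional invariant subtorus, not one of dimension $k$, so the rank of the trivial summand in the claimed splitting is lost). The missing lemma is that under $K_X$ nef \emph{every} nonzero holomorphic vector field is nowhere vanishing (the ideal $\mathfrak L$ of Theorem \ref{f-l} is zero), so all of $H^0(\Theta_X)$ evaluates to a trivial subbundle of $\Theta_X$; maximality of $k$ then gives $k=h^0(\Theta_X)=\dim\Aut^0(X)$, the $\theta_i$ form a basis of $\mathrm{Lie}(\Aut^0(X))$, and $\eta_i=0$ automatically. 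With this in place your Step 3 does go through, though it remains unnecessary. (A minor point: Remark \ref{isogeny} is about Chern classes and does not license ``replacing $X$ within its isogeny class'' for the pseudo-Abelian conclusion; fortunately you are not really replacing $X$, only presenting it as a quotient of an \'etale cover.)
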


The previous classical Theorem was essentially proven by Roth
and, thanks to the work of Fujiki \cite{fujiki} and Lieberman \cite{lieberman},
 the class of partially (tangentially) framed projective manifolds is understood.
 We also observe, using results of  \cite{lieberman}, \cite{amoros} (which also contains an excellent survey of the theory of framed K\"ahler manifolds), that the picture becomes more complicated if we enlarge our consideration
 to the wider realm of cKM (compact K\"ahler Manifolds), where more complicated constructions
 such as Seifert fibrations, principal torus bundles and torus suspensions enter into  the picture
 (see section 4).
 
 We also consider in Section 3 the non K\"ahler case, where we define a more general notion of suspension over a parallelizable Manifold,  for which a splitting of the type 
 $\Theta_X \cong \hol_X^k \oplus \sF$ holds true. 
 
 And we consider all the Manifolds isogenous to a parallelizable Manifold,
 calling  them {\bf twisted Hyperelliptic Manifolds} since they also enjoy the property of
 having all the real Chern classes equal to zero (but a general converse result is missing).
 \bigskip
 
 Passing to     the second class,  of partially co-framed  manifolds, we see how this interesting class  is more mysterious and presents 
some intriguing questions  (see the discussion following Proposition \ref{nozero} and Theorem \ref{coframed}). 

A new result which we prove here is the following

 \begin{theorem}\label{coframed}
 
 Assume that $X$ is a $k$-coframed compact K\"ahler manifold, such  that either
 
 (a) $q: = h^0(\Omega^1_X)=k$,
 
  or  more generally 
  
  (b) the framing is given by $f^* ( \Omega^1_A)$, where $f : X \ra A$ is holomorphic
  and $A$ is a complex torus of dimension $k$ ($f$ is the Albanese map $a_X$ in the first case).
  
  (i)  Then $f$    is a differentiable fibre bundle.
 
(ii)  If   $X$ is projective,  and $K_X$ is nef, then $X$ is a pseudo-Abelian variety provided one of the following assumptions is satisfied:

\begin{enumerate}
\item
$n-k \leq 3$,
\item
$K_X$ is pseudoample (there is a  positive integer $m$ such that  $|m K_X|$ is base point free),
\item
the fibres of $f$ have a pseudoample canonical system,
\item
$k \leq 5$.
\end{enumerate}

 \end{theorem}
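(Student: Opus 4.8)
\textbf{Proof plan.}

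\emph{Part (i).} A maximal trivial subbundle $\hol_X^k\subset\Omega^1_X$ is in particular a subbundle, so the $k$ global sections it provides are pointwise linearly independent; dually, the bundle map $f^*\Omega^1_A\to\Omega^1_X$ is fibrewise injective with locally free cokernel, hence $df\colon\Theta_X\to f^*\Theta_A$ is surjective at every point. In case (a) one first notes that on a compact K\"ahler manifold every holomorphic $1$-form is closed, so the Albanese map $a_X$ is defined, and $q=k$ forces $H^0(\Omega^1_A)\to H^0(\Omega^1_X)$ to be an isomorphism identifying the framing with $a_X^*\Omega^1_A$, so $f=a_X$; in case (b) one argues with $f$ directly. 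Thus $f$ is a proper holomorphic submersion, and Ehresmann's fibration theorem shows it is a differentiable fibre bundle.

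\emph{Reductions for Part (ii).} Taking the Stein factorization $X\to B\to A$, with $X\to B$ having connected fibres and $B\to A$ finite \'etale (so $B$ is again a complex torus and all hypotheses persist), we may assume $f$ has connected fibres $F$; this changes nothing, since being a pseudo-Abelian variety is intrinsic to $X$. As $K_A$ is trivial, $K_{X/A}=K_X$, so $K_X$ nef restricts to $K_F$ nef: the fibres are minimal. Assumption (2) implies (3) by restricting $|mK_X|$ to a fibre, and assumption (1) implies, via the Abundance Theorem in dimension $\le 3$ (Miyaoka, Kawamata), that $K_F$ is semiample, hence also (3). So it remains to treat two cases: (3) the fibres have semiample canonical bundle, and (4) $k\le 5$.

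\emph{Part (ii), case (3).} Form the relative Iitaka fibration of $K_{X/A}$, a factorization $X\to W\to A$ in which $X\to W$ has connected fibres of numerically trivial canonical class and $W\to A$ is, over a dense open set, the family of canonical models of the fibres of $f$, hence a family of canonically polarized varieties over the abelian variety $A$. Since an abelian variety contains no positive-dimensional subvariety of log general type and has numerically flat cotangent bundle, the variation of this family is zero (Viehweg--Zuo, Kebekus--Kov\'acs, Campana--P\u{a}un; or Kov\'acs), so after a finite \'etale base change $\tilde A\to A$ it becomes a product $\tilde A\times Z$. For the $K$-trivial part $X\to W$ one invokes the Beauville--Bogomolov decomposition on an \'etale cover of a fibre and the fact that the only global $1$-forms on $X$ are the $k$ pulled back from $A$ (equivalently, the relative Albanese of $X/W$ is trivial) to see that the associated variation of Hodge structure has finite monodromy, hence, after enlarging $\tilde A\to A$, the family $X\times_A\tilde A\to\tilde A$ is a product $\tilde A\times Y$ with $Y$ projective. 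Choosing $\tilde A\to A$ Galois with group $G$ acting diagonally — acting on $\tilde A$ by translations, freely, since an \'etale cover of a torus has translation deck group — we get $X=(\tilde A\times Y)/G$, a pseudo-Abelian variety, with $\Omega^1_X$ the descent of $\hol_{\tilde A\times Y}^k$ plus the cotangent directions of $Y$.

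\emph{Part (ii), case (4), and the main difficulty.} When $k\le 5$ the fibres need not be canonically polarized, and one argues directly with the smooth fibration $f\colon X\to A$ over a torus of dimension $\le 5$ carrying $k$ pointwise independent nowhere-vanishing closed $1$-forms, running an extension of the Hao--Schreieder analysis (\cite{haoschreieder1}) to deduce isotriviality of $f$, after which one concludes as in case (3). \textbf{The main obstacle} is exactly this step together with the $K$-trivial fibre part of case (3): the positivity/hyperbolicity machinery that immediately kills the variation of canonically polarized families gives nothing for minimal fibres with $\kappa=0$, so one must instead extract rigidity from the nowhere-vanishing $1$-forms, the nefness of $K_X$, and generic vanishing — and it is precisely the numerical bound $k\le 5$ that makes the required Hodge-theoretic input available.
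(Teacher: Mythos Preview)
Your Part (i) and the reductions $(2)\Rightarrow(3)$, $(1)\Rightarrow(3)$ via abundance in dimension $\le 3$ are correct and match the paper. The divergence begins with how you handle (3) and (4).

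For (3) the paper does not pass through the relative Iitaka fibration. It invokes Taji \cite{taji} directly: since the fibres have semiample canonical bundle they admit good minimal models, and Taji's theorem gives \emph{birational} isotriviality over $A$. The remaining work is to upgrade birational isotriviality to an honest holomorphic bundle, and here the paper uses Wavrik \cite{wavrik}: one must check that $h^0(F_a,\Theta_{F_a})$ is constant in $a$, which the paper does by observing (via Fujiki--Lieberman) that fibres with $K_{F_a}$ nef have no vanishing vector fields, hence are themselves pseudo-Abelian, and this is deformation-stable. Your route through the Iitaka decomposition is not wrong in spirit, but the step you flag as the main obstacle --- the $K$-trivial fibre part --- is a genuine gap as written. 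The claim that ``the only global $1$-forms on $X$ are pulled back from $A$'' is only available in case (a), and even there it does not by itself force the variation of Hodge structure of the $K$-trivial fibres to have finite monodromy; you would need a substantive argument (e.g.\ of Viehweg--Zuo type for Calabi--Yau and hyperk\"ahler factors over an abelian base) that you have not supplied. The paper sidesteps this entirely by using Taji's result, which handles all good-minimal-model fibres uniformly.

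For (4) the gap is sharper. There is no ``extension of the Hao--Schreieder analysis'' to smooth fibrations over tori of dimension $\le 5$; \cite{haoschreieder1} is a threefold result and its methods do not obviously scale with $\dim A$. The paper instead cites Taji \cite{taji21}, whose bound on the Kodaira dimension of base spaces of families yields birational isotriviality precisely when the base is an abelian variety of dimension $\le 5$, after which one concludes via Wavrik as in (3). Without that reference (or an equivalent), your case (4) is not proved.
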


 A partial   analogue of (1) holds also for  compact K\"ahler manifolds, in view of \cite{c-h-p}.
 
 \smallskip
 
For the proof of  the previous Theorem \ref{coframed} we use a recent result by Taji \cite{taji}, proving birational isotriviality
 under the more general assumption that the fibres admit a good minimal model.

 In the projective case with $K_X$ nef we do not have yet more examples of $k$-coframed Manifolds 
   than the class of 
the Pseudo-Abelian varieties.

  Observe that hypothesis (2) holds  if the so called Abundance conjecture is true, hence
 in particular it is reasonable to pose the following

  \begin{conjecture}\label{co-framed}
 
Assume that $X$ is a $k$-coframed projective  manifold with  $q =k$ and 
with $K_X$  nef: then $X$ is a pseudo-Abelian variety.
 
 \end{conjecture}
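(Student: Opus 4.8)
My plan is to run the proof of Theorem \ref{coframed}(ii) essentially verbatim, replacing its conditional hypotheses (1)--(4) by the Abundance conjecture for the Albanese fibres; the conjecture is then exactly Theorem \ref{coframed}(ii) modulo Abundance. First I fix the geometry. Since $X$ is $k$-coframed, the maximal trivial subbundle $\hol_X^k\subset\Omega^1_X$ is spanned by $k$ global holomorphic $1$-forms (the images of the constant sections), and as $q=h^0(\Omega^1_X)=k$ these form a basis of $H^0(\Omega^1_X)$. Hence the coframing is $a_X^*\Omega^1_A\cong\hol_X^k$ for the Albanese map $a_X:X\to A$ with $\dim A=k$, so we are in case (a) of Theorem \ref{coframed}. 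By part (i) of that theorem $a_X$ is a differentiable fibre bundle, in particular a proper holomorphic submersion, and its fibres $F$ are smooth connected projective manifolds of dimension $n-k$. Taking determinants in the relative cotangent sequence $0\to a_X^*\Omega^1_A\to\Omega^1_X\to\Omega^1_{X/A}\to 0$ and using $K_A=\hol_A$ gives $K_X\cong K_{X/A}$, whence $K_X|_F\cong K_F$ is nef because $K_X$ is.

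With this in place, the remainder of the argument is that of Theorem \ref{coframed}(ii): one feeds the family $a_X:X\to A$ over the torus $A$ into Taji's birational isotriviality theorem \cite{taji}, upgrades to genuine isotriviality using that $a_X$ is a smooth fibre bundle with minimal fibres, and reads off the pseudo-Abelian structure $X=(F\times A')/G$, where $A'\to A$ is the \'etale cover of tori produced by isotriviality and $G$ is its deck group, acting by translations on $A'$ and freely and diagonally on $F\times A'$. The single place at which that proof invokes one of the hypotheses (1)--(4) is in verifying the \emph{precondition} of Taji's theorem, namely that the fibres $F$ admit a good minimal model; everything downstream of that step is unconditional and identical to the proven case.

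Thus the hard part --- and the reason the statement is posed as a conjecture rather than a theorem --- is exactly this precondition. We know only that $K_F$ is nef; what is needed is that $K_F$ be semiample, equivalently that $F$ be its own good minimal model. This is precisely the Abundance conjecture for the $(n-k)$-dimensional fibres, and it is the content behind hypotheses (2) and (3) of Theorem \ref{coframed} (while (1) $n-k\le 3$ is simply the range in which Abundance is a theorem). I would therefore complete the proof by invoking Abundance for $F$ and then quoting Theorem \ref{coframed}(ii); conversely, without Abundance the nef class $K_F$ need not be semiample, Taji's theorem cannot be activated, and the argument stalls at exactly this step. Any unconditional proof would have to bypass Taji entirely, for instance by exploiting the flat structure on $\Theta_X\to a_X^*\Theta_A$ dual to the framing to produce an honest holomorphic fibre-bundle (hence isotrivial) structure directly --- a route whose feasibility is, at present, unclear.
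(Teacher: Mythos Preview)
Your analysis is correct, and it matches the paper's own rationale for posing this as a conjecture rather than a theorem: immediately before stating Conjecture~\ref{co-framed}, the paper observes that hypothesis~(2) of Theorem~\ref{coframed} (that $K_X$ be pseudoample) would follow from the Abundance conjecture, whence the conjecture is exactly Theorem~\ref{coframed}(ii) modulo Abundance. There is no proof in the paper to compare against, since the statement is left open.

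One small remark on packaging: the paper invokes Abundance for $X$ itself (nef $K_X$ $\Rightarrow$ $K_X$ semiample, i.e.\ hypothesis~(2)), whereas you invoke it for the fibres $F$ (nef $K_F$ $\Rightarrow$ $K_F$ semiample, i.e.\ hypothesis~(3)). Since the proof of Theorem~\ref{coframed} shows (2)$\Rightarrow$(3) and then reduces everything to (3), both routes land in the same place; your version has the minor advantage of only needing Abundance in dimension $n-k$ rather than $n$.
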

 
\medskip

In fact, we ask the more  general 

\begin{question} Are   the 
 $k$-coframed projective manifolds with $K_X$ nef  
 just the Pseudo-Abelian varieties?
 \end{question}

 It was observed by the referee that related conjectures are contained in the preprint \cite{CCHao}.

\smallskip

Finally, while    for $k$- framed Manifolds and  for $k$-coframed
Manifolds   the  last  $k$   Chern classes are   zero (in the Chow ring
if the Manifolds are projective), we observe that more generally 
 for  the Manifolds  isogenous to a k-Torus product  (in view of Remark \ref{isogeny})  not only
the last  $k$   Chern classes are   zero in rational  cohomology,  
but also   all the Chern numbers of $X$ are equal to zero. 

Hence in  the last section we briefly discuss  Manifolds with $K_X$ nef 
and with vanishing Chern numbers.

For $n=2$, as we already illustrated,   there are only  manifolds isogenous to a torus product;  but, in dimension $n=3$, in spite of a  recent result by  Hao-Schreieder \cite{haoschreiederbmy},  (valid in all dimensions, see Theorem \ref{hsbmy}) which  in particular describes      threefolds
with  $c_1 c_2=0$,  and Kodaira dimension $2$, as being birationally isogenous 
to a torus product, 
  we have also the Schoen threefolds,  which do not exhibit this feature.

Section 6 contains also some historical comments which may be useful in order to relate the present results
to  the classical literature.

\bigskip
\section{Surfaces with second Chern class equal to zero, and a Chern class characterization of Hyperelliptic and Abelian surfaces}

In  this section we establish Theorem \ref{1-dantoni}
mentioned in the Introduction: this is done rerunning the proof of the classification of surfaces.

The classification theorem by Castelnuovo and Enriques \cite{classificazione} was extended by Kodaira \cite{kodaira},
and a crucial result  is the following
(see   \cite{bea}, Chapter 6, Theorem VI.13  containing the basic   ingredients,     Theorem 1.4 of \cite{fcbl},  the crucial Theorem of \cite{time}  and
also another proof in  \cite{haoschreiederbmy}):

\begin{theorem}\label{crucial}
Let  $ S $ be a  compact smooth complex  surface, minimal in the strong sense that $K_S$ is  nef, and 
such that $\chi(S)=0$ (which implies that  $ K_S^2=0$  and the topological Euler number
$e(S) =  c_2 (S) = 0$). Equivalently,  assume that  $K_S$ is nef, and that $e(S) = c_2(S)=0$.

 Then  $X$ is strongly isogenous to a torus product, namely $ X = (T \times Y)/G$,
 where $T$ is a torus of positive dimension (1 or 2) and $G$ acts freely via a diagonal action on $ T \times Y$.

\end{theorem}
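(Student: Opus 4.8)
The plan is to rerun the Enriques--Kodaira classification of compact complex surfaces, organised by the Kodaira dimension $\kappa(S)$, reading off at each step which surfaces satisfy both $K_S$ nef and $\chi(\hol_S)=0$; as recorded in the statement, for such a surface Noether's formula forces $K_S^2=e(S)=0$, so all three vanishings may be used freely. First I would observe that $K_S$ nef implies that $S$ carries no $(-1)$-curve (such a curve $C$ has $K_S\cdot C=-1<0$), hence $S$ is a minimal model; but the minimal models with $\kappa=-\infty$ are $\PP^2$ and the geometrically ruled surfaces (i.e.\ $\PP^1$-bundles over smooth curves), on none of which is $K_S$ nef (a fibre $F$ of a $\PP^1$-bundle has $K_S\cdot F=-2$). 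Therefore $\kappa(S)\ge 0$, and since a surface of general type has $K_S^2>0$ we are reduced to $\kappa(S)\in\{0,1\}$.

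Suppose $\kappa(S)=0$. The classification then says that $S$ is a complex torus, a $K3$ surface, an Enriques surface, or a bielliptic (hyperelliptic) surface. A complex torus is already of the required shape, with $T=S$ of dimension $2$ and $G$ trivial; a bielliptic surface is by definition $X=(E_1\times E_2)/G$ where $G$ acts on $E_1=:T$ by translations and on $E_2=:Y$ with $E_2/G\cong\PP^1$, the diagonal action being free, so here $\dim T=1$. The two remaining possibilities are excluded because $\chi(\hol_S)=2$ for a $K3$ surface and $\chi(\hol_S)=1$ for an Enriques surface, contradicting $\chi(\hol_S)=0$.

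The case $\kappa(S)=1$ is the substantial one, and the structure of the elliptic fibration is where the main obstacle lies. Here $S$ admits a relatively minimal elliptic fibration $f\colon S\to B$ over a smooth curve (its Iitaka fibration). From the canonical bundle formula one has $\chi(\hol_S)=\deg\bigl((R^1f_*\hol_S)^{\vee}\bigr)$, a nonnegative integer which is strictly positive as soon as $f$ has a singular fibre; hence $\chi(\hol_S)=0$ forces every fibre of $f$ to be a smooth elliptic curve, the only allowed degeneration being multiple fibres. I would then invoke the classical structure theory of such fibrations: inverting Kodaira's logarithmic transforms by an orbifold-\'etale base change $\widetilde B\to B$, ramified to the prescribed orders over the multiple-fibre points — whose local model near such a point is $(E\times\D)/(\ZZ/m)$ with $\ZZ/m$ acting by rotation on the disc and by translation by an $m$-torsion point on $E$, which is precisely the feature making the eventual action free — one reduces to a smooth elliptic fibre bundle over a curve. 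Since the $j$-invariant is a holomorphic, hence constant, function on the compact base, this bundle is isotrivial, so a further finite \'etale base change trivialises it to a product $E\times B''$. Following the two projections of $E\times B''$ then shows that the Galois group $G$ of the resulting finite \'etale cover $E\times B''\to S$ acts diagonally, compatibly with a suitable trivialisation, and freely, with $T:=E$ an elliptic curve; moreover $g(B'')\ge 2$, since $\kappa(B'')=\kappa(E\times B'')=\kappa(S)=1$. Hence $S=(T\times Y)/G$ with $Y=B''$, completing all cases, and in each case $\dim T\in\{1,2\}$ as asserted.

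The two delicate points — which I expect to carry the real content of the $\kappa(S)=1$ case — are (i) Kodaira's local analysis at the multiple fibres, needed to verify that the induced $G$-action on $E\times B''$ is fixed-point free, and (ii) the compatibility of the trivialisation of the isotrivial bundle with the deck group, so that the action is literally diagonal and not merely diagonal up to isogeny. Everything else is bookkeeping inside the surface classification.
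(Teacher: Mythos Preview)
Your approach---rerunning the Enriques--Kodaira classification by Kodaira dimension---is exactly what the paper intends: the paper gives no argument of its own for this theorem, deferring entirely to the cited references \cite{bea}, \cite{fcbl}, \cite{time}, whose proofs proceed just as you outline. Your treatment of $\kappa=-\infty$, $\kappa=2$, and the K\"ahler $\kappa=0$ cases is correct.

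There is, however, one genuine gap. The statement as written says ``compact smooth complex surface'' without a K\"ahler hypothesis, and your $\kappa=0$ list omits the primary and secondary Kodaira surfaces: these are non-K\"ahler, have $K_S$ torsion (hence nef) and $\chi(\hol_S)=e(S)=0$, yet cannot be written as $(T\times Y)/G$ since any such quotient of a product of a torus with a curve is K\"ahler. Non-K\"ahler principal elliptic bundles over curves of genus $\ge 2$ give the same obstruction with $\kappa=1$. The paper's Theorem~1.3 (the version in the introduction) explicitly assumes K\"ahler, and the cited sources treat the algebraic case, so the hypothesis is surely intended; you should flag it.

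A second, subtler point concerns your $\kappa=1$ argument. Your step ``a further finite \'etale base change trivialises it to a product'' is not justified by isotriviality alone: after reducing the structure group to translations, the bundle corresponds to a homomorphism $\rho\colon\pi_1(\tilde B)\to E$, and if $\rho$ has infinite image no finite \'etale cover \emph{of the base} trivialises it. What is actually needed is a finite \'etale cover \emph{of $S$}, and this is where projectivity enters---for instance via a multisection (making the torsor class torsion), or via Poincar\'e complete reducibility applied to $\mathrm{Alb}(S)$ to split off the elliptic factor. So your ``delicate point (ii)'' locates the right step, but the difficulty is the existence of the trivialisation rather than its compatibility with the deck group; the non-K\"ahler examples above are precisely the bundles for which this step fails.
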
 
 \medskip
 
 More precisely, 
 
 \begin{theorem}\label{detail}
 The surfaces strongly isogenous to a    torus product are classified as follows: 

1)  $p_g(S)=1$,  $q(S)=2$, and $S$ is  a complex torus $A$ (a {\bf hyperelliptic surface of grade  1}), or 

2)  $p_g(S)= p,  q(S) = p + 1$  and  $S$ is {\bf isogenous} to an elliptic  product, i.e. $S$ is the quotient  $(C_1 \times C_2)/G$ of a product of  curves
of genera $$g_1: = g (C_1) =1 , g_2: = g (C_2) \geq 1,$$
 by a free action of a finite group of product type
(that is, $G$ acts faithfully on $C_1, C_2$ and we take the diagonal action $ g (x,y) : = ( g x, gy)$),
 such that,  if we denote by $g'_j = g (C_j /G)$, then $$g'_1 + g'_2 = p + 1.$$ 

 Case 2) bifurcates into two subcases: 

(2.1,p) {\bf pseudo-elliptic case}: $g'_1=1$ (hence  $G$ acts on $C_1$ by translations),    $ C_2 / G $ has genus $p$, 
and we assume \footnote{to exclude that we are in case 1)}, for $p=1$, that $g_2 \geq 2$; or 

(2.0,p)  $g'_1=0$ (hence $C_1 /G \cong  \PP^1$),   $C_2 /G$ has genus $p+1= q(S)$,
and we assume  \footnote{to exclude that we are in case (2.1,0)}, for $p=0$, that $g_2 \geq 2$; 
here  the image of Albanese map $ \alpha : S \ra Alb(S) $ equals $C_2 /G \subset  Alb(S)$.

\medskip

Case (2.1,0) with $g_2=1$ is the case where $S$ is a properly  hyperelliptic  
(bielliptic)  surface (a {\bf  hyperelliptic surface  of grade  $\geq 2$}): 

$S = (E_1 \times C_2) / G$, where $ E_1, C_2$ are elliptic curves, 
and $G$ acts  via an action of product type, such that $G$ acts
on $ E_1$ via translations, and faithfully on $C_2$ with  $ C_2 / G \cong  \PP^1$.

In this case all the fibres of the Albanese map are isomorphic to $C_2$, $P_{12}(S) = 1$, and  $S$ admits also an elliptic fibration
 $\psi : S \ra C_2 / G \cong \PP^1$.
 \smallskip

In the other cases (2.0,p),  (2.1,p), for $p \geq 1$, (2.1,0) with $g_2 \geq 2$,
 $S$ is {\bf isogenous to a higher genus elliptic product},  this means that 
  $C_2$ has genus $g_2 \geq 2$. Here $S$ is properly elliptic  and $P_{12}(S) \geq  2$.
  
   \medskip

The  cases are distinguished mainly by the geometric genus   $p_g(S)=p$.

In the  case of the torus and of the hyperelliptic surfaces  $K_S$ is numerically equivalent to zero, whereas in the other cases 
$K_S$ is not numerically equivalent to zero.

Moreover, the  three cases are also distinguished (notice that $c_1(S)$ is the class of the divisor $ - K_S$)  by 
\begin{itemize}
\item
$K_S = 0 \in Pic (S)$  for the case 1) of a complex torus, 
\item
$c_1(S)=0 \in H^2(S, \ZZ)$ but $K_S \neq 0 \in Pic (S)$
in the case of properly hyperelliptic surfaces,
\item
$c_1(S)\neq 0 \in H^2(S, \QQ)$ in the other cases where $S$ is isogenous to a higher genus elliptic product.
\end{itemize}

\end{theorem}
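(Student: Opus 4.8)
The plan is to start from a presentation $S=(T\times Y)/G$, with $T$ a complex torus of positive dimension, $G$ finite acting diagonally and freely, and to unwind it; since $\dim S=2$ the only possibilities are $\dim T=2$ (so $Y$ is a point, $S=T/G$) and $\dim T=1$ (so $T=E_1$ is an elliptic curve and $Y=C_2$ a smooth curve). In the second case one first normalizes the presentation to one in which $G$ acts faithfully on both factors, by successively pushing $\ker(G\ra\Aut C_2)$ into the $E_1$-factor and $\ker(G\ra\Aut E_1)$ into the $C_2$-factor: each such kernel acts freely on the torus factor (a fixed point there would give a fixed point on the product), hence by translations, so that factor stays a torus of the same dimension, and the quotient genus of the curve factor is not affected --- this is the standard minimal realization familiar from the theory of surfaces isogenous to a product. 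We may also discard at the outset the uniruled case $C_2\cong\PP^1$ (then $S$ is a $\PP^1$-bundle over the elliptic curve $E_1/G$), which is not on the present list and is recorded separately, e.g. in Theorem \ref{1-dantoni}(2); thus from now on $g_2:=g(C_2)\ge 1$. Two numerical identities feed everything: since $T\times Y\ra S$ is étale of degree $|G|$, multiplicativity of the holomorphic Euler characteristic gives $\chi(\hol_S)=\chi(\hol_T)\chi(\hol_Y)/|G|=0$, whence $q(S)=p_g(S)+1$; and pullback of $1$-forms identifies $H^0(\Omega^1_S)=H^0(\Omega^1_{T\times Y})^G=H^0(\Omega^1_T)^G\oplus H^0(\Omega^1_Y)^G=H^0(\Omega^1_{T/G})\oplus H^0(\Omega^1_{Y/G})$, so $q(S)=\dim(T/G)+g(Y/G)$, which in the curve case reads $q(S)=g_1'+g_2'$ with $g_1':=g(E_1/G)$, $g_2':=g(C_2/G)$, hence $p:=p_g(S)=g_1'+g_2'-1$. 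This already forces the relations $q(S)=p+1$ and $g_1'+g_2'=p+1$ asserted in case 2).

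Next the case division. If $\dim T=2$ then $K_S$ has finite order in $\Pic(S)$ (as $\pi^*K_S=K_T=\hol$), so $\kappa(S)=0$ and $S$ is minimal (a $(-1)$-curve would meet $K_S$ negatively), while $\pi_1(S)$ is infinite (it contains $\pi_1(T)\cong\ZZ^4$ with finite index); by the Enriques--Kodaira classification $S$ is then a complex torus or a (properly) hyperelliptic surface. In the first case we are in case 1) ($p_g=1$, $q=2$); in the second, the Bagnera--De Franchis structure theorem writes $S=(E_1\times E_2)/G$ with $G$ acting on $E_1$ by translations, faithfully on $E_2$, and $E_2/G\cong\PP^1$ --- an instance of case (2.1,0) with $g_2=1$, the two projections furnishing the Albanese map $a_S:S\ra E_1/G$ with all fibres $\cong E_2$ and the second elliptic pencil $S\ra E_2/G\cong\PP^1$. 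If $\dim T=1$ we branch on $g_1'=g(E_1/G)\in\{0,1\}$. The value $g_1'=1$ is equivalent to $G$ acting on $E_1$ by translations (a finite group acting freely on an elliptic curve acts by translations, and $g_1'=1$ forces by Riemann--Hurwitz the cover $E_1\ra E_1/G$ to be étale); this is case (2.1,$p$), with $C_2/G$ of genus $g_2'=p$. The value $g_1'=0$ means $E_1/G\cong\PP^1$, i.e. the $G$-action on $E_1$ has fixed points; this is case (2.0,$p$), with $g_2'=p+1=q(S)$, and the Stein factorization of $a_S:S\ra\Alb(S)$ through the morphism $S\ra C_2/G$ (legitimate since $H^0(\Omega^1_S)$ comes entirely from the $C_2$-factor) identifies $\im(a_S)$ with $C_2/G\subset\Alb(S)$. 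In all of case 2), Riemann--Hurwitz for $C_2\ra C_2/G$ gives $g_2\ge g_2'$ with equality only when that cover is étale; combined with the stated hypotheses this shows that $g_2=1$ occurs exactly in the bielliptic case (2.1,0) --- in case (2.1,1) a further $g_2=1$ would make $G$ act by translations on $C_2=E_2$ too, so $S$ would be a free translation quotient of an abelian surface, hence a torus, which is precisely why $g_2\ge 2$ is imposed there --- while $g_2\ge 2$ in all remaining cases, so $S$ is isogenous to a higher genus elliptic product.

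It remains to record the pluricanonical and Chern-class statements. For the torus, $K_S=0\in\Pic(S)$ trivially. For a bielliptic $S=(E_1\times E_2)/G$, the transitive free translation action of $G$ on the $E_1$-orbits makes $a_S:S\ra B:=E_1/G$ an isotrivial elliptic fibre bundle, with smooth fibres $\cong E_2$ and no multiple fibres; Kodaira's canonical bundle formula then gives $K_S=a_S^*M$ with $\deg M=\chi(\hol_S)=0$ and $M$ of finite order $>1$ in $\Pic^0(B)$ (finite since $12K_S=\hol$, nontrivial since $K_S\ne 0$ by the classification), so $c_1(S)=-a_S^*c_1(M)=0$ in $H^2(S,\ZZ)$ while $K_S\ne 0$ in $\Pic(S)$, and $P_{12}(S)=1$. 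For $S=(C_1\times C_2)/G$ isogenous to a higher genus elliptic product ($g(C_1)=1$, $g(C_2)\ge 2$), $\pi^*K_S=\mathrm{pr}_2^*K_{C_2}$ is semiample of Iitaka dimension $1$; hence $\kappa(S)=1$ --- $S$ is properly elliptic, via $S\ra C_2/G$ with general fibre $\cong C_1$ --- with $P_{12}(S)=h^0(C_2,12K_{C_2})^G\ge 2$, and $\mathrm{pr}_2^*c_1(K_{C_2})=\pi^*c_1(S)$ is nonzero in $H^2(C_1\times C_2,\QQ)$, so, the transfer making $\pi^*$ injective on rational cohomology, $c_1(S)\ne 0$ in $H^2(S,\QQ)$ and $K_S$ is not numerically trivial. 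This gives the final trichotomy, and the stated behaviour of $K_S$ under numerical equivalence.

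The step I expect to be the main obstacle is the $\dim T=2$, $G\ne 1$ case: identifying $S$ with a hyperelliptic surface in standard Bagnera--De Franchis product form relies on the full Enriques--Kodaira classification, and the finer assertions there --- $c_1(S)=0\in H^2(S,\ZZ)$ and $P_{12}(S)=1$ --- require the canonical bundle formula for the Albanese elliptic bundle. The other genuine difficulty is purely combinatorial: tracking the overlaps between the listed cases (the ``$g_2\ge 2$'' conventions that separate case 1), the bielliptic case, and the higher genus elliptic products), and the normalization to a presentation on which $G$ acts faithfully on both curves. Everything else is a direct computation with invariant differentials, Riemann--Hurwitz, and multiplicativity of $\chi(\hol)$ and $e$ under the étale cover $T\times Y\ra S$.
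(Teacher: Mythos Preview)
Your argument is correct and covers all the assertions of the theorem. It differs from the paper's proof mainly in level of detail and in one genuine methodological point.

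The paper's proof is almost entirely a pointer to the literature: the classification itself is taken as already established in \cite{bea}, \cite{fcbl}, \cite{time}, and the only claim singled out as requiring an argument is that $c_1(S)=0$ in $H^2(S,\ZZ)$ for properly hyperelliptic surfaces, which the paper obtains by invoking its own Theorem~3.1 of Part~I \cite{part1} on Bagnera--de Franchis manifolds. You instead rebuild the classification from the presentation $S=(T\times Y)/G$ using invariant differentials, multiplicativity of $\chi(\hol)$ under \'etale covers, Riemann--Hurwitz, and the Enriques--Kodaira list for $\kappa=0$; and for the key integral vanishing $c_1(S)=0\in H^2(S,\ZZ)$ you use Kodaira's canonical bundle formula for the Albanese elliptic bundle, observing that $K_S=a_S^*M$ with $M\in\Pic^0(B)$ of degree zero, hence $c_1(M)=0$ already in $H^2(B,\ZZ)$. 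This is a genuinely different and more elementary route than the paper's appeal to \cite{part1}, and it has the advantage of staying within classical surface theory; the paper's approach, on the other hand, situates the vanishing as an instance of a general statement valid for Bagnera--de Franchis manifolds in all dimensions.

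One small point: your justification of $P_{12}(S)\ge 2$ in the higher-genus case via $h^0(C_2,12K_{C_2})^G\ge 2$ is correct but the inequality is not quite immediate from what you wrote, especially in the subcase $(2.1,0)$ with $g_2\ge 2$ where $C_2/G\cong\PP^1$; since you have already established $\kappa(S)=1$, it is cleaner simply to invoke the $P_{12}$-theorem (as the paper implicitly does via \cite{time}) rather than compute invariants directly.
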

\begin{proof}
The part which is not contained in the cited sources  (\cite{bea}, Chapter 6, Theorem VI.13,    Theorem 1.4 of \cite{fcbl},  the crucial Theorem of \cite{time}  and
 \cite{haoschreiederbmy}) concerns how to distinguish the several cases.

Everything is straightforward, except the assertion that for hyperelliptic surfaces (case (2.1,0) with $g_1=1$)) the first 
integral Chern class is zero: but this is a special case of our result on Bagnera de Franchis manifolds, Theorem 3.1 of \cite{part1}.

\end{proof}
The following can be readily checked:

\begin{proposition}\label{autom}
In case  1) $A$ acts transitively and freely on $A$, so $Aut^0(S)$ has dimension 2.

In case  (2.1,p), $Aut^0(S)$ has dimension 1: $C_1= E_1$ acts on $S$,  transitively on the orbit closures, but with stabilizer 
$  H \subset G \subset E_1$ 
for the classes of points $ (x,y) \in E_1 \times  C_2$ such that $ H y = y$.

Finally, in case (2.0,p ), $Aut^0(S)$ has dimension 0 :  there is no action of $C_1$ on $S$, but the general fibres of the Albanese map $ \al : S \ra C_2/G$
are  isomorphic to $C_1$ (a finite number shall only be isogenous to $C_1$).
\end{proposition}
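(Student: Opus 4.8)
The plan is to pull everything back to the \'etale Galois cover $\pi\colon C_1\times C_2\to S$ with group $G$ (in the notation of the Theorem $g_1=g(C_1)=1$, $g_2=g(C_2)\ge 1$, $G$ acts faithfully on each factor and freely on the product) and to reduce each assertion to an elementary computation inside $\Aut^0(C_1\times C_2)$. Case~1) is immediate: if $S=A$ is a complex torus, translations embed $A$ into $\Aut(A)$, while $\Aut(A)=A\rtimes\Aut(A,0)$ with $\Aut(A,0)$ discrete (an automorphism fixing the origin is a group automorphism, hence lies in a subgroup of $\GL(2n,\ZZ)$), so $\Aut^0(S)=A$ has dimension $2$.

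For the two product cases I would first record the standard lifting principle: the isomorphism class of the $G$-cover $\varphi^{*}(C_1\times C_2)\to S$ is locally constant in $\varphi\in\Aut^0(S)$, hence constant, equal to $C_1\times C_2\to S$; therefore the $\Aut^0(S)$-action lifts to an action on $C_1\times C_2$ of a connected group $\mathcal G$ fitting in $1\to G'\to\mathcal G\to\Aut^0(S)\to 1$ with $G'$ finite, one may take $\mathcal G\subseteq\Aut^0(C_1\times C_2)$, $\mathcal G$ normalizes (the image of) $G$, and $\dim\Aut^0(S)=\dim\mathcal G$ (see \cite{fujiki}, \cite{lieberman}). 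Since $C_1$ and $C_2$ have genus $\ge 1$, one has classically $\Aut^0(C_1\times C_2)=\Aut^0(C_1)\times\Aut^0(C_2)$, which equals $C_1$ (translations of the first factor) when $g_2\ge 2$ and $C_1\times C_2$ in the bielliptic subcase $g_2=1$.

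The normalizer computation then finishes both cases. In case~(2.1,$p$), $G$ acts on $C_1=E_1$ by translations, so $E_1\times\{e\}\subseteq\Aut^0(C_1\times C_2)$ centralizes $G$; on the other hand, when $g_2=1$ the quotient $C_2/G\cong\PP^1$ forces $G$ to contain a non-translation $y\mapsto\zeta y+c$ of $C_2$, and a translation $\tau_t$ of $C_2$ conjugates it to $y\mapsto\zeta y+c+t(1-\zeta)$, which is not in the finite group $G$ for general $t$; hence the identity component of the normalizer of $G$ in $\Aut^0(C_1\times C_2)$ is exactly $E_1\times\{e\}$, so $\mathcal G=E_1$, and since $G$ acts faithfully on $C_2$ we get $G'=\mathcal G\cap G=\{e\}$ and $\Aut^0(S)\cong E_1$. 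Its orbit through $[(x,y)]$ is the image of $E_1\times\{y\}$, namely $E_1/H$ with $H=\operatorname{Stab}_G(y)\subseteq G\subseteq E_1$, and the stabilizer of $[(x,y)]$ is precisely $H$. In case~(2.0,$p$), $C_1/G\cong\PP^1$ forces $G$ to contain an element acting on $C_1$ as $x\mapsto\zeta x+c$ with $\zeta\ne 1$; conjugation by a translation $t$ of $C_1$ moves this element outside the finite set $G$ for general $t$, so no nontrivial translation of $C_1$ normalizes $G$, the connected group $\mathcal G$ is therefore trivial, and $\Aut^0(S)=\{e\}$.

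It remains to identify the Albanese fibres in case~(2.0,$p$): since $g_1=1$ and $G$ acts on $C_1$ with nontrivial linear part, $H^0(\Omega^1_{C_1})^G=0$, hence $q(S)=g(C_2/G)=p+1$ and $a_S$ is the composite $S\to C_2/G\hookrightarrow\operatorname{Jac}(C_2/G)=\Alb(S)$; its fibre over $[y]$ is $C_1/\operatorname{Stab}_G(y)$, which is $\cong C_1$ for general $[y]$, while for the finitely many $[y]$ with $H:=\operatorname{Stab}_G(y)\ne\{e\}$ the freeness of the $G$-action on $C_1\times C_2$ forbids any element of $H$ to fix a point of $C_1$, so $H$ acts by translations and $C_1/H$ is an elliptic curve isogenous, but in general not isomorphic, to $C_1$. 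The only genuinely delicate points are the lifting of $\Aut^0(S)$ to the cover together with the bookkeeping of its finite kernel, and keeping the description of $\Aut^0(C_1\times C_2)$ and of the normalizer of $G$ uniform across the subcase $g_2=1$; everything else is routine.
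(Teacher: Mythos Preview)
Your proof is correct. The paper itself gives no proof of this proposition: it simply introduces it with the phrase ``The following can be readily checked'' and leaves all details to the reader. You have supplied a complete and careful argument---lifting $\Aut^0(S)$ through the \'etale Galois cover, identifying $\Aut^0(C_1\times C_2)$ explicitly, computing the connected centralizer/normalizer of $G$ case by case, and reading off the Albanese fibres---so there is nothing to compare against beyond noting that your level of detail far exceeds what the paper provides.

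One very minor remark on presentation: in case~(2.0,$p$) (and likewise in the bielliptic subcase of~(2.1,$0$)) the relevant condition for $\tau_t\in C_1\times\{e\}$ to normalize the diagonally embedded $G\subset\Aut(C_1\times C_2)$ is, because $G$ acts faithfully on $C_2$, that $\tau_t$ \emph{centralize} $g|_{C_1}$ for every $g\in G$; your conjugation computation shows precisely this fails for any non-translation $g|_{C_1}$ and any $t\neq 0$. This is what you are doing, but the phrasing ``moves this element outside the finite set $G$'' reads as if $G$ sat inside $\Aut(C_1)$ alone. Since $G$ does act faithfully on $C_1$ as well, the identification is harmless and the conclusion is unaffected.
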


With a weaker notion of minimality we have a counterexample to the previous Theorem \ref{crucial},
as we shall now see.

\begin{proposition}\label{min}

If the smooth surface satisfies $c_1^2= c_2=0$, that is  $K_S^2= e(S)=0$, then $S$ is minimal.
\end{proposition}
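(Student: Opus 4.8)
The plan is to argue by contradiction, passing to a minimal model and then excluding the resulting numerical possibility via the Enriques--Kodaira classification.

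First I would suppose $S$ is not minimal, so that there is a birational morphism $\pi : S \to S_0$ onto a minimal surface $S_0$, expressed as a composition of $r \ge 1$ blow-ups of points. Since each blow-up of a point decreases $K^2$ by $1$ and increases the topological Euler number $e = c_2$ by $1$, one has
\[
  K_S^2 = K_{S_0}^2 - r, \qquad e(S) = e(S_0) + r.
\]
Combined with the hypothesis $K_S^2 = e(S) = 0$ this forces $K_{S_0}^2 = r \ge 1$ and $e(S_0) = -r \le -1$. In particular $12\,\chi(\hol_{S_0}) = K_{S_0}^2 + e(S_0) = 0$ by Noether's formula, so $\chi(\hol_{S_0}) = 0$.

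Next I would derive a contradiction from the existence of a minimal surface $S_0$ with $\chi(\hol_{S_0}) = 0$ and $K_{S_0}^2 \ge 1$. A minimal surface of general type has $\chi(\hol) \ge 1$, so $\kappa(S_0) \le 1$; but for $\kappa(S_0) \in \{0,1\}$ one has $K_{S_0}^2 = 0$ (in these cases $K_{S_0}$ is numerically trivial, resp.\ the surface is properly elliptic), contradicting $K_{S_0}^2 \ge 1$. Hence $\kappa(S_0) = -\infty$, so by the classification $S_0$ is $\PP^2$ or is geometrically ruled over a curve $C$ of genus $g$; then $\chi(\hol_{S_0}) = 1 - g$, and $\chi(\hol_{S_0}) = 0$ forces $S_0$ ruled over an elliptic curve, whence $K_{S_0}^2 = 8(1-g) = 0$ --- again contradicting $K_{S_0}^2 \ge 1$. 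This contradiction shows that $S$ is minimal.

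The proof is short, and there is really no serious obstacle: the only non-formal ingredient is the classification input (a minimal surface of Kodaira dimension $\le 1$ has $K^2 = 0$, and $\chi(\hol) \ge 1$ for surfaces of general type), which I would simply quote from the standard references such as \cite{bea}; everything else --- existence of a minimal model in dimension two, additivity of $K^2$ and of $e$ under blow-ups, and Noether's formula --- is routine surface theory. Alternatively one could phrase the last step directly as ``no minimal surface has $c_1^2 > 0$ and $c_2 < 0$ simultaneously'', the minimal surfaces with $c_2 < 0$ being precisely those geometrically ruled over a base of genus $\ge 2$, which have $c_1^2 = 8(1-g) < 0$.
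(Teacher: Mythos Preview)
Your proof is correct and follows essentially the same route as the paper's: argue by contradiction, pass to a minimal model, and use Noether's formula together with surface classification to obtain a numerical contradiction. The paper's argument is marginally quicker---it observes that $\chi(\hol_S)=0$ is a birational invariant, and that $e(S')<0$ forces (by Castelnuovo's theorem) $S'$ ruled with $q\ge 2$, hence $\chi(\hol_{S'})<0$---which is exactly your ``alternative'' phrasing at the end, bypassing the full Kodaira-dimension case split.
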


\begin{proof}

By the Noether's formula,  from $ K_S^2=0$ and $e(S) =  c_2 (S) = 0$  follows $\chi(\hol_S)=0$.

And $\chi(\hol_S)$ is a birational invariant.

If $S$ is not minimal, then $S$ is the blow-up of a minimal surface $S'$ with $e(S') <0$. By Castelnuovo's theorem
$S'$ is a ruled surface with $q(S') \geq 2$. But then $0 > \chi(\hol_{S'}) = \chi(\hol_S) =0$, a contradiction.

\end{proof}

\begin{corollary}\label{non-min}
Consider  the minimal surfaces $S$ with Chern numbers $c_1^2= c_2=0$, that is with $K_S^2= e(S)=0$.

If $K_S$ is nef, $S$
is  isogenous to a product $Y \times A$, with a torus $A$ of dimension $\geq 1$.

If $K_S$ is not nef, then $S$ is a $\PP^1$-bundle over an elliptic curve.

\end{corollary}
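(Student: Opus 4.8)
The plan is to read the Corollary off from the already--proved Theorem~\ref{crucial} in the nef case, and from the Enriques--Kodaira classification of minimal surfaces in the non--nef case, using Noether's formula as the only computational bridge. To begin with, the hypothesis $c_1(S)^2 = c_2(S) = 0$ says precisely that $K_S^2 = e(S) = 0$, so Noether's formula gives $\chi(\hol_S) = \tfrac{1}{12}(K_S^2 + e(S)) = 0$; and, as noted in the Proposition just preceding this Corollary, such an $S$ is automatically minimal, so the word ``minimal'' in the statement is not an additional constraint.

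Suppose first that $K_S$ is nef. Then $S$ is minimal in the strong sense and satisfies $e(S) = c_2(S) = 0$, which is exactly the equivalent form of the hypotheses of Theorem~\ref{crucial}. Applying that theorem, $S$ is strongly isogenous to a torus product, i.e.\ $S = (T \times Y)/G$ with $T$ a torus of positive dimension and $G$ acting freely and diagonally on $T \times Y$. The finite unramified (\'etale) quotient map $T \times Y \to S$ then exhibits $S$ as isogenous, in the sense of Remark~\ref{isogeny}, to $Y \times A$ with $A := T$ a torus of dimension $\geq 1$, which is the claimed conclusion.

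Suppose instead that $K_S$ is not nef. Since $S$ is minimal, this forces $\mathrm{kod}(S) = -\infty$, so by the classification of surfaces $S$ is either $\PP^2$ or a geometrically ruled surface, i.e.\ a $\PP^1$-bundle $\PP(\sE) \to B$ over a smooth curve $B$. The case $S = \PP^2$ is excluded because $e(\PP^2) = 3 \neq 0$; and in the ruled case multiplicativity of the topological Euler number in a locally trivial fibration gives $e(S) = e(\PP^1)\, e(B) = 2\,(2 - 2g(B))$, so $e(S) = 0$ forces $g(B) = 1$, and $S$ is a $\PP^1$-bundle over an elliptic curve (consistently, $K_S^2 = 8(1 - g(B)) = 0$).

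I do not expect a real obstacle: the statement is a direct corollary of Theorem~\ref{crucial} together with the surface classification. The only points requiring a word of care are that ``strongly isogenous to a torus product'' implies ``isogenous to a product'' via the defining \'etale cover, and the standard dichotomy that a minimal surface has either $K_S$ nef or Kodaira dimension $-\infty$; both are classical and can be invoked directly.
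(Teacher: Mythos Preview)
Your proof is correct and follows essentially the same approach as the paper: Theorem~\ref{crucial} handles the nef case directly, and for $K_S$ not nef you rule out $\PP^2$ by $e(\PP^2)=3$ and then compute $e(S)=4(1-g(B))$ for a $\PP^1$-bundle to force $g(B)=1$, exactly as the paper does. Your additional remarks (Noether's formula, the passage from ``strongly isogenous'' to ``isogenous'') simply make explicit what the paper leaves implicit.
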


\begin{proof}
By Theorem \ref{crucial} there remains only to consider the case where $K_S$ is not nef, hence $S$ is ruled.
Therefore, since the case of $S= \PP^2$ yields $e(S)=3$, we must have a $\PP^1$-bundle over a curve $C$.
In this case, since $0 = e(S)= 4 (1 - g(C))$, we get that $C$ has genus $g(C)= 1$.

\end{proof}

\begin{remark}\label{action}
 If we take  an elliptic curve $B$, and  a vector bundle $V$ of rank 2 on $B$,
say $ V = L \oplus M$, where $ deg (L) = deg (M)$,  then $\PP(V)$ is minimal, but the group of automorphisms of $\PP(V)$
consist of $\CC^*$ for general choice of $L, M$ (see \cite{maruyama}, also Theorem 7.3 of \cite{catliu} ).

Hence the action is not transitive on the orbit closures, because the two sections of $\PP(V)$ are left invariant by the
automorphism group.
\end{remark}

For completeness we show that:

\begin{proposition}
Surfaces in the class (2.0,0) do exist.
\end{proposition}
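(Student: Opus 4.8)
The plan is to exhibit $S$ explicitly as a quotient $(C_1\times C_2)/G$ of a product of curves by a finite group acting freely via a diagonal (``product type'') action, tuned to match the numerical profile of subcase (2.0,0): $g(C_1)=1$, $g(C_1/G)=0$, $g(C_2/G)=q(S)=1$, $g(C_2)\ge 2$, and $p_g(S)=0$. First I would pin down the group. In order that $C_1/G\cong\PP^1$ with $C_1$ elliptic, $G$ must contain automorphisms with fixed points; on the other hand, for the diagonal action on $C_1\times C_2$ to be free, every $g\in G$ with a fixed point on $C_1$ must act freely on $C_2$, so the branch locus of $C_2\to C_2/G$ can only involve the subgroup $H\le G$ of \emph{translations} of $C_1$. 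Since a nontrivial $H$ is needed to force $g(C_2)\ge 2$ over an elliptic base (an étale cover of an elliptic curve is again elliptic), and $G/H$ must be nontrivial to collapse $C_1$ to $\PP^1$, one needs $|G|\ge 4$; a short computation rules out $\ZZ/4$, leaving $G=(\ZZ/2)^2=\{1,a,b,ab\}$ as the minimal possibility. I take this $G$.

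For the action on $C_1=\CC/\Lambda$: let $\tau\in C_1$ be a nonzero $2$-torsion point, let $a$ be translation by $\tau$ and $b$ the inversion $x\mapsto -x$. Then $a$ acts freely, while $b$ and $ab\colon x\mapsto -x+\tau$ each have four fixed points; quotienting first by $\langle a\rangle$ (again an elliptic curve) and then by the induced involution $\pm 1$ shows $C_1/G\cong\PP^1$, i.e. $g'_1=0$.

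For the action on $C_2$: over a chosen elliptic curve $E$ with two marked points $p_1,p_2$, I would take the connected Galois cover $C_2\to E$ with group $G$ whose monodromy $\rho\colon\pi_1(E\setminus\{p_1,p_2\})\twoheadrightarrow G$ sends the two small loops $\gamma_1,\gamma_2$ to $a$ and the two standard generators $\alpha,\beta$ of $\pi_1(E)$ to $b$ and $ab$. Such $\rho$ exists: it is compatible with the defining relation $[\alpha,\beta]\gamma_1\gamma_2=1$ because $G$ is abelian and $a$ has order $2$, and it is surjective since $b,ab$ generate $G$, so $C_2$ is irreducible. The only element occurring as a local monodromy generator is $a$, hence $a$ is the only nontrivial element with fixed points on $C_2$; in particular $b$ and $ab$ act freely on $C_2$. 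Riemann–Hurwitz for $C_2\to E$ (two branch points, ramification index $2$, two points above each) gives $2g(C_2)-2=4(2\cdot 1-2)+2\cdot 2=4$, so $g(C_2)=3\ge 2$, while $C_2/G=E$ has genus $g'_2=1$.

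Finally I would assemble the pieces. Since $a$ acts freely on $C_1$ and $b,ab$ act freely on $C_2$, every nontrivial element of $G$ acts freely on $C_1\times C_2$, so $S:=(C_1\times C_2)/G$ is a smooth projective surface, isogenous to a product of product type, with $g_1=1$, $g_2=3$, $g'_1=0$, $g'_2=1$, hence $g'_1+g'_2=1$. To read off invariants I use $H^0(\Omega^1_S)=\bigl(H^0(\Omega^1_{C_1})\oplus H^0(\Omega^1_{C_2})\bigr)^G$: translations act trivially on $H^0(\Omega^1_{C_1})$ but $b$ acts by $-1$, so $H^0(\Omega^1_{C_1})^G=0$, whereas $H^0(\Omega^1_{C_2})^G=H^0(\Omega^1_E)$ is one-dimensional; thus $q(S)=1$. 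Since $g_1=1$ gives $\chi(\hol_S)=\chi(\hol_{C_1})\chi(\hol_{C_2})/|G|=0$, we get $p_g(S)=q(S)-1=0$. So $p=p_g(S)=0$, $q(S)=p+1=1=g'_1+g'_2$, $g'_1=0$ and $g_2=3\ge 2$: $S$ lies in the class (2.0,0), and its Albanese map is the fibration $S\to E$ induced by $C_2\to E$, with general fibre $C_1$. The main point to get right — the ``obstacle'' — is precisely the tension between freeness of the diagonal action and $g(C_2)\ge 2$: this is what excludes the cyclic groups and dictates both the translation/inversion splitting on $C_1$ and the requirement that all branching of $C_2\to E$ have monodromy inside the translation subgroup $\langle a\rangle$.
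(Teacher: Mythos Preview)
Your construction is correct and follows the same overall strategy as the paper: build an explicit diagonal quotient $(C_1\times C_2)/G$ by specifying the $G$-action on an elliptic $C_1$ directly and the $G$-action on $C_2$ via a surjective monodromy homomorphism from $\pi_1(E\setminus\{p_1,p_2\})$, arranging that the branch monodromy on $C_2$ lies in the translation subgroup of $G$ acting on $C_1$.

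The difference is purely in the choice of group. The paper takes $G=(\ZZ/2)^3$ with two independent $2$-torsion translations $\eta_1,\eta_2$ and the inversion $\e$ on $C_1$, and monodromy $\alpha\mapsto\e$, $\beta\mapsto\eta_1$, $\gamma_1,\gamma_2\mapsto\eta_2$; this yields $g(C_2)=5$. You instead work with the minimal group $G=(\ZZ/2)^2$ (one translation $a$ and the inversion $b$), monodromy $\alpha\mapsto b$, $\beta\mapsto ab$, $\gamma_1,\gamma_2\mapsto a$, obtaining $g(C_2)=3$. Your example is strictly smaller, and you add the observation (absent in the paper) that $(\ZZ/2)^2$ is the smallest group for which the construction can succeed, since $\ZZ/4$ cannot act on an elliptic curve with quotient $\PP^1$ and a nontrivial translation subgroup. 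Both arguments verify freeness in the same way, by checking that the unique element carrying the branch monodromy on $C_2$ is a translation on $C_1$.
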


\begin{proof}
Let $G := (\ZZ/2)^3$, and make it first act on an elliptic curve $C_1$ as the group of transformations
$$ z \mapsto \pm z + \eta, \ 2 \eta = 0, $$
so that $G$ has generators $\eta_1, \eta_2, \e$, where $\e (z)=-z$.

To get a second action on $C_2$ such that $C_2 / G= : E_2$ is an elliptic curve, we take $E_2$
to be an elliptic curve, $\sB = \{ x_1, x_2\}$ a branch set, so that
$$ \pi_1 (E_2 \setminus \sB) = \langle \al, \be, \ga_1, \ga_2| \ga_1 \ga_2= [ \al, \be]\rangle,$$
hence $H_1 (E_2 \setminus \sB) = \ZZ \al \oplus  \ZZ \be  \oplus \ZZ \ga_1$.

Define $\mu : H_1 (E_2 \setminus \sB)  \ra G$ by:
$$ \mu (\al) = \e, \ \mu(\be) = \eta_1, \ \mu(\ga_1) = \eta_2 \Rightarrow \mu(\ga_2) = \eta_2.$$

We want to prove that the product action of $G$ on $C_1 \times C_2$ is free.

To this purpose we observe that $\eta_2$ has eight fixed points on $C_2$, and, since $C_2$ has genus $5$,
$E'_2 : = C_2 / \langle \eta_2 \rangle$ is an elliptic curve, so that $E'_2 \ra E_2$ is \'etale, that is, 
$ G / \langle \eta_2 \rangle$ acts freely on $E'_2$. The conclusion is that the only element acting on $C_2$
with fixed points is $\eta_2$; since $\eta_2$ atcs freely on $C_1$, the product action is free.

\end{proof}

We end this section observing that if $S$ is 
 a minimal surface with  $e(S)=0$, then  either $S$ is a $\PP^1$ bundle over an elliptic curve, 
 or $K_S$
 is nef. In the latter case it must be $K_S^2 =0$, since for $K_S^2 < 0$ $S$ is ruled, hence $K_S$ is not nef,
  and for $K_S^2 > 0$ $S$ is 
 of general type, and then $e(S) > 0$. 
 
 The following  is  the  correction  of  the theorem of Dantoni \cite{dantoni};
 it shows that if $S$ is not a torus then the  surface is `elliptic' only  in the weak sense
 that it has a rational map with  fibres elliptic curves,
 and not in the strong sense   that a torus of
 dimension at least 1   acts   on $S$,  as we saw in Proposition \ref{autom} 
 and in the previous remark \ref{action} (it was also stated as Theorem \ref{1-dantoni}
 in the Introduction with only slightly different wording).

 \begin{theorem}\label{dantoni}
 A  surface $S$ with  $e(S)=0$ is either the blow up of a $\PP^1$- bundle over a curve
 of genus at least 2, or it is minimal, and then it is either a complex torus, or 
 a hyperelliptic surface, or it  is isogenous to a higher genus elliptic product,
 or it is a $\PP^1$-bundle over an elliptic curve.
 
 In the last three cases $S$ contains  
 a 1-dimensional family of isomorphic elliptic curves whose union is dense in $S$.
 
 If the canonical divisor is numerically trivial, then $S$ is either a complex torus, or 
 a hyperelliptic surface.

 \end{theorem}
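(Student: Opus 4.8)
The plan is to re-run the part of the surface classification that is relevant here, feeding the hypothesis $e(S)=0$ into Noether's formula $K_S^2+e(S)=12\chi(\hol_S)$; this gives $K_S^2=12\chi(\hol_S)$, so $K_S^2$ and $\chi(\hol_S)$ vanish together. I treat the non-minimal and the minimal cases separately. If $S$ is not minimal, blow it down to a minimal model $S'$ along $k\ge 1$ blow-ups, so that $e(S')=e(S)-k=-k<0$. A minimal surface with $c_2<0$ must be a $\PP^1$-bundle $\PP(\sE)\to C$ over a curve $C$ of genus $\ge 2$: minimal surfaces of general type have $c_2\ge c_1^2/3>0$ by Bogomolov--Miyaoka--Yau, minimal surfaces of Kodaira dimension $0$ or $1$ have $c_2\ge 0$, $\PP^2$ has $c_2=3$, and a $\PP^1$-bundle over a genus-$g$ curve has $c_2=4(1-g)$. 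Hence $S$ is the blow-up of such a $\PP^1$-bundle (and comparing $K_S^2=K_{S'}^2-k=8(1-g)-k$ with $12(1-g)$ even forces $k=4(g-1)$), and $S$ carries a $(-1)$-curve, so $K_S$ is not nef.

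Now suppose $S$ is minimal. If $K_S$ is not nef, $S$ is uniruled, hence $\PP^2$ (excluded since $e(\PP^2)=3$) or a $\PP^1$-bundle over a curve $C$ with $4(1-g(C))=e(S)=0$, that is, over an elliptic curve. If $K_S$ is nef, then $K_S^2\ge 0$; the case $K_S^2>0$ is impossible, as it would make $S$ of general type with $e(S)>0$, so $K_S^2=0$ and $\chi(\hol_S)=0$, and Theorem~\ref{crucial} applies: $S$ is strongly isogenous to a torus product, and the classification of those surfaces leaves exactly a complex torus, a (properly) hyperelliptic surface, or a surface isogenous to a higher genus elliptic product. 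This gives the stated dichotomy. For the last sentence of the theorem: if $K_S\equiv 0$ then $S$ has no $(-1)$-curve, hence is minimal, and $K_S$ is nef, so $S$ is one of those three types; but a surface isogenous to a higher genus elliptic product is properly elliptic with $P_{12}(S)\ge 2$, so there $K_S\not\equiv 0$, leaving a complex torus or a hyperelliptic surface.

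In the last three cases I would exhibit a one-parameter family of mutually isomorphic elliptic curves with dense union. If $S=(E_1\times C_2)/G$ is properly hyperelliptic with $C_2$ elliptic and $G$ acting on $E_1$ by translations, the Albanese map $S\to E_1/G$ is a holomorphic fibre bundle all of whose fibres are isomorphic to $C_2$. If $S=(C_1\times C_2)/G$ is isogenous to a higher genus elliptic product ($g(C_1)=1$), the second projection $f:S\to C_2/G$ has general fibre an elliptic curve isomorphic to $C_1$: when $g'_1=1$ (cases (2.1,p) and (2.1,0) with $g_2\ge 2$) the group $G$ acts on $C_1$ by translations with $C_1\to C_1/G$ \'etale, so these fibres are copies of $C_1$, while in case (2.0,p), where $f$ is the Albanese map, this is Proposition~\ref{autom}. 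If $S=\PP(\sE)\to E$ is a $\PP^1$-bundle over an elliptic curve, fix a section $\sigma_0$ and a fibre $F$; for $m\gg 0$ the sheaf $\hol_S(\sigma_0+mF)$ is globally generated, being a quotient of the pull-back of the rank-two bundle $\pi_*\hol_S(\sigma_0+mF)$ on $E$, whose degree tends to $+\infty$, so $|\sigma_0+mF|$ is base-point free of positive dimension and its general member is a smooth irreducible curve $D$ with $D\cdot F=1$; such a $D$ maps isomorphically onto $E$, hence $D\cong E$, and these copies of $E$ sweep out $S$. In each case, passing to a general one-parameter subfamily (in the ruled case a general pencil, discarding its finitely many singular members) yields the required family.

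Theorem~\ref{crucial} and the classification of surfaces strongly isogenous to a torus product do the bulk of the work, so the one genuinely delicate step is the previous paragraph for an \emph{arbitrary} rank-two bundle $\sE$ on $E$: guaranteeing that the dense family consists of \emph{mutually isomorphic} elliptic curves and not merely isogenous ones. The linear-system argument is designed to handle all $\sE$ at once; the alternative is a case split via Atiyah's classification --- projectively homogeneous bundles, where the translations of $E$ lift and act on $S$ with all orbits elliptic curves, versus $\hol_E\oplus N$ with $\deg N<0$, where one uses the sections coming from $H^0(E,N^{-1})$, of dimension $-\deg N\ge 1$.
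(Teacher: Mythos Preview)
Your proof is correct and follows the same approach as the paper: the paper does not give a separate proof of Theorem~\ref{dantoni} but treats it as a consequence of the preceding results (Theorem~\ref{crucial}, the classification theorem of surfaces strongly isogenous to a torus product, Proposition~\ref{autom}, and Corollary~2.5), together with the observation just before the statement that for a minimal $S$ with $e(S)=0$ one has either a $\PP^1$-bundle over an elliptic curve or $K_S$ nef with $K_S^2=0$. You reproduce exactly this logic, and you supply more detail than the paper does for the claim about the dense one-parameter family of isomorphic elliptic curves---in particular your linear-system argument in the $\PP^1$-bundle case (producing sections via $|\sigma_0+mF|$ for $m\gg 0$) is not spelled out in the paper at all.
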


 \begin{proof}
 
 The statement follows immediately from:  Theorem \ref{crucial},  Theorem  \ref{detail},  Proposition \ref{autom}, 
 Corollary \ref{non-min} and from the statement and the proof of Proposition \ref{min}.
 
 \end{proof}

 \section{Pseudo Abelian  Varieties and   Varieties Isogenous to a $k$-Torus-Product}
 
 The first aim of this section is to show how to derive from Roth's original definition
 of the Pseudo Abelian Varieties the modern definition which we adopt in this article;
 and how Roth's definition leads, in the case of compact K\"ahler Manifolds,
 to the notion of Seifert fibrations.
 
 Later on we dwell on other notions, for instance we give a new definition, of suspension over a parallelizable Manifold.
 
 Leonard Roth \cite{rothPAV},  \cite{rothPAV2} defined the Pseudo-Abelian varieties as follows:
 
 \begin{definition}
 (Roth)
 
 A  smooth projective variety $X$ of dimension $n$ is said to be Pseudo Abelian of order $k$ if
 
 i) $Aut^0(X)$ contains a complex torus $T$, of maximal dimension $=k$, with the property  that its orbits are all of dimension $k$.
 \end{definition}
 
 Recall in fact the following theorem by Fujiki and Lieberman \cite{fujiki}, \cite{lieberman}
 
 \begin{theorem}\label{f-l}
 If $X$ is a compact K\"ahler manifold (or more generally in the Fujiki class)  then there is an exact sequence of groups 
 $$ 1 \ra L \ra Aut^0(X) \ra T_X \ra 1$$
 where $L$ is a linear algebraic group, and $T_X$ is a complex torus. 
 
 The Lie Algebra of $Aut^0(X)$ is $\mathfrak A := H^0(\Theta_X)$, while the Lie Algebra
 of $L$ is the space $\mathfrak L$ of vector fields which admit zeros
 ($\mathfrak L$ is an ideal in the Lie Algebra $\mathfrak A$).
 
 If $X$ is not uniruled, then $L= \{1\}$.
 \end{theorem}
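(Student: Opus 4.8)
\emph{Proof strategy.} This is the classical structure theorem of Fujiki \cite{fujiki} and Lieberman \cite{lieberman}; I indicate how I would set up the argument. Since $X$ is a compact complex manifold, the theorem of Bochner and Montgomery shows that $\Aut(X)$ is a complex Lie group acting holomorphically on $X$, with Lie algebra $H^0(X,\Theta_X)$; compactness of $X$ makes this space finite dimensional, so $G:=\Aut^0(X)$ is a connected complex Lie group with $\mathfrak A=H^0(\Theta_X)$. I would then bring in the Albanese torus $A:=\Alb(X)$, which exists because $X$ lies in the Fujiki class $\mathcal C$ (with $a_X\colon X\ra A$ a morphism in the K\"ahler case). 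Functoriality of the Albanese makes $G$ act on $A$, and since $G$ is connected this action lands in $\Aut^0(A)=A$, the translation group, giving a holomorphic homomorphism $\rho\colon G\ra A$. Set $L:=\ker\rho$ and $T_X:=\rho(G)$. The first technical point is that $\rho(G)$ is \emph{closed} in $A$, hence a subtorus: this is automatic when $X$ is projective, where $G$ and $\rho$ are algebraic and $T_X$ is a closed algebraic subgroup of the abelian variety $A$; in the K\"ahler/Fujiki case one checks that $d\rho(\mathfrak A)\subseteq\mathrm{Lie}(A)$ is rational with respect to the period lattice, using the isomorphism $a_X^*\colon H^0(\Omega^1_A)\to H^0(\Omega^1_X)$. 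Granting this, $1\ra L\ra G\ra T_X\ra 1$ is exact with $T_X$ a complex torus.

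Next I would identify $\mathrm{Lie}(L)$ with $\mathfrak L=\{\theta\in\mathfrak A:\theta\text{ vanishes somewhere on }X\}$. A vector field $\theta$ generates $\varphi_t=\exp(t\theta)$, and $\theta$ vanishes at $p$ precisely when $p$ is fixed by all the $\varphi_t$. If $\theta(p)=0$, then $a_X(p)$ is fixed by the translation $\rho(\varphi_t)$, which forces $\rho(\varphi_t)=\mathrm{id}$, i.e.\ $\theta\in\mathrm{Lie}(L)$. Conversely, if $\theta\in\mathrm{Lie}(L)$, then $\varphi_t$ preserves every fibre of $a_X$ and annihilates every holomorphic $1$-form (these being pulled back from $A$), and the structure theory of holomorphic vector fields on compact K\"ahler manifolds (Carrell--Lieberman, Sommese, and Fujiki's analysis of the automorphism group relative to the Albanese) then produces a zero of $\theta$. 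Since $L=\ker\rho$ is normal in $G$, $\mathrm{Lie}(L)=\mathfrak L$ is automatically an ideal of $\mathfrak A$ (consistently, conjugating $\varphi_t$ by $g\in G$ carries the zero of $\theta$ to a zero of $\mathrm{Ad}(g)\theta$). That $L$ is moreover a \emph{linear algebraic} group is the substantive content of Fujiki's theorem: one analyses the action of $L$ on the fibres of $a_X$ and on the associated Barlet cycle spaces (Lieberman), the conclusion being that $L$ cannot contain a positive-dimensional complex torus acting on $X$, and Fujiki's theorem is precisely that $L$ is then (the analytification of) a linear algebraic group.

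For the last assertion, suppose $L\neq\{1\}$; being connected it is positive-dimensional, hence contains a subgroup isomorphic to $\G_a=(\C,+)$ or to $\G_m=\C^*$. An effective holomorphic action of $\G_a$ or $\G_m$ on $X$ has general orbit a copy of $\A^1$ or $\C^*$, whose closure in the compact $X$ is a rational curve, and such rational curves sweep out a dense open subset of $X$; hence $X$ is uniruled. Contrapositively, if $X$ is not uniruled then $L=\{1\}$ and $\Aut^0(X)=T_X$ is a complex torus. The main obstacle in all of this is the middle step: proving that the holomorphic vector fields admitting a zero form precisely $\mathrm{Lie}(L)$ — in particular that they form a linear subspace — and that $L$ is linear algebraic; this is exactly where the deep work of Fujiki and Lieberman on holomorphic vector fields with zeros and on the relative automorphism group over the Albanese is needed, and the closedness of $\rho(G)$ in $\Alb(X)$ in the K\"ahler case is a cognate subtlety.
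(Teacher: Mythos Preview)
The paper does not give its own proof of this theorem: it is stated as a known result of Fujiki and Lieberman, introduced with ``Recall in fact the following theorem by Fujiki and Lieberman \cite{fujiki}, \cite{lieberman}'', and no argument is supplied. So there is nothing in the paper to compare your proposal against.

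Your sketch is a reasonable outline of the Fujiki--Lieberman argument, and you correctly flag the genuinely hard points (closedness of $\rho(G)$ in $\Alb(X)$, the identification $\mathrm{Lie}(L)=\mathfrak L$, and the linear-algebraicity of $L$) as places where one must invoke the original papers. One small caution: in the last paragraph you pass from ``$L$ connected and positive-dimensional'' to ``$L$ contains a $\G_a$ or a $\G_m$''. This uses that $L$ is already known to be linear algebraic (so that one has Borel subgroups, unipotent radicals, etc.); a merely connected complex Lie group need not contain such a one-parameter algebraic subgroup. Since you have established linear-algebraicity of $L$ just before, this is fine, but the logical dependence should be made explicit. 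Alternatively one can argue directly from $\mathfrak L\neq 0$: a nonzero vector field with a zero, together with its flow, already produces the covering family of rational curves via the orbit closures, without first knowing that $L$ is algebraic.
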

 
 We  explain now,  sketching an argument of proof,   how to derive from Roth's definition a simpler one.
 
 We have an action $ T \times X \ra X$, and we denote by $T_x $ the orbit of $x$, i.e. the  image of $T \times \{x\}$.
  $T_x$ is smooth because no subvariety $Z$ of $T$ can be  exceptional, the normal bundle being spanned by global sections.
 Recall also that we are assuming that $X$ is smooth.
 
 Hence the orbits $T_x$ of $T$ give a subvariety $\sV$ of dimension $n-k$ in the Hilbert scheme $\sH$ (Douady space) of $X$,
 and the restriction of the universal family to $\sV$, $ \phi: \sU \ra \sV$, yields $\sU$ which
  maps isomorphically to $X$. In fact, $ dim (\sU) = n = dim (X)$, and the identity of $X$ factors as:
 $$ X \ra \sU  \ra \sV \times X \ra X,$$
 where $ x \in X \mapsto ( T_x, x)$.

Hence we may write:  $ \phi: X \ra \sV$, and since the action of $T$, 
 $$ a : T \times X \ra X$$ commutes with the projection over $\sV$, 
and is effective, then  the general fibre of $\phi$ is isomorphic to $T$.

 The other fibres are instead of the form $T/G'$, where $G'$ is a finite subgroup of $T$.
 Since the general fibres are isomorphic,   we have a holomorphic bundle over an open set $\sV'$ of $\sV$
 (for instance, as a consequence of  Kuranishi's theorem (see \cite{kuranishi}, \cite{wavrik}). 
 
 Because of the action of $T$ on the fibres the monodromy of the bundle centralizes the group of translations 
 hence the monodromy transformations consist of translations, and we have a principal bundle.
 
 If   $X$ is projective, then the monodromy is finite, hence we get a finite group $G \subset T$.
 
 The fibration is isotrivial, hence 
there exists a finite Galois base change $f : Y \ra \sV$ with group $G$ such that the fibre product 
is birational to a product
 $$ Y \times_{\sV} X  \sim Y \times T.$$
 
 At each point of $\sV$ the local monodromy $G'$ is a subgroup of $G$, hence the fibre product
 $ Y \times_{\sV} X$ yields an unramified covering of $X$ ( since $G'$ yields an unramified covering of the 
 corresponding fibre).
 
Hence the fibre product  $ Y \times_{\sV} X$ is smooth,  since $X$ is smooth and we have an unramified covering.

$ Y \times_{\sV} X$ is smooth,   is birational to $Y \times T$,
and all the fibres are isomorphic to $T$:
therefore,  $ Y \times_{\sV} X \cong Y \times T,$
compatibly with the projection onto $Y$.

 This motivates an equivalent definition, and some related definitions:
 
  \begin{definition}
 
(I)  A complex manifold  $X$ of dimension $n$ is said to be a k- Pseudo-Torus (or Pseudo-Torus of order $k$) if
 there is a torus $T$ of dimension $k$, a manifold $Y$, and a finite Abelian group $G$ 
 acting on $T$ faithfully via translations, acting faithfully on $Y$, such that the quotient
 of the product action is isomorphic to $X$:
 $$ X = (Y \times T)/G.$$
 
 (II) If moreover $X$ is projective, we shall say that $X$ is a Pseudo-Abelian Variety of order $k$ in the strong sense.
 
 (III) A compact K\"ahler Manifold $X$ is said to be  Seifert fibred, cf. \cite{lieberman}, if there is
 a finite abelian unramified covering $Z \ra X$ with Abelian Galois group $G$ (hence $X = Z/G$)
 such  that $Z$ is a principal bundle $ Z \ra Y$, with fibre a (positive dimensional) complex torus $T$,
 and where the action of $G$ commutes with the action of $T$ on $Z$.
 
 (IV) A compact complex manifold $X$ is called (\cite{amoros}) a suspension over a complex torus $T = \CC^k / \Lam$
 if there is a compact complex manifold $Y$ and a homomorphism $\rho : \Lam \ra Aut(Y)$
 such that
 $$ X =( \CC^k \times Y) / \Lam, \la (z,y) : = ( z + \la , \rho(\la) (y)) .$$

 \end{definition}
 
 \begin{remark}
(a)  The reason for definition (III) is that if $X$ is only a cKM, and it has an action of a  complex torus $T$, with all the orbits
tori of the same dimension,  then the global monodromy is not necessarily finite.
 But the local monodromies around the points corresponding to multiple fibres are finite subgroups of $T$, and since we have a finite number of them, the local monodromies generate a finite subgroup $G$ of $T$. Associated to this subgroup there is
 a  covering $Y \ra \sV$ which is a principal $T$-bundle.
 
 (b) In the case of definition (IV) of a suspension over a torus, $\CC^k$ acts on $X$ via translations on the first summand, but in general there 
 is no complex torus acting on $X$.
 
 Indeed the  $\CC^k$-orbits are the projections $\pi (\CC^k \times \{ y\})$, which are isomorphic to
 $  \CC^k / Stab_y$, hence they do not need be compact.
 
 The suspension over a torus is a Pseudo-torus if and only if the subgroup $ Im (\rho) \subset Aut(Y)$
 is finite. Equivalently, if and only if all orbits are tori (which amounts to $Stab_y$ being a subgroup of
 finite index in $\Lam$ for all $ y \in Y$): since then  $ Ker (\rho)$ has finite index.
 
 In particular the suspension is Pseudo-Abelian if and only if it is a Pseudo-torus.
  
 (c) Observe  that for such a suspension  we have a splitting of the tangent bundle
 
 $$\Theta_X \cong \hol_X^k \oplus \sF,$$ 
 hence also of the cotangent bundle.

 In \cite{amoros}  Example 2.4, page 1005, it is observed that a suspension $X$ as above is K\"ahler
 if and only if $Y$ is K\"ahler and a finite index subgroup  $\Lam ' \subset \Lam$ maps to $Aut^0(Y)$.
 
 In this case, taking $G : = \Lam / \Lam'$,
 there is a Galois covering $Z$ of $X$ with group $G$.  If moreover $Aut^0(Y)$ is trivial, then $Z = T' \times Y$
 and we have a pseudo-torus.
 
 If  $Aut^0(Y)$ is non-trivial, one cannot conclude that $Z$ is a principal torus bundle, since the action of $\Lam'$ on $Y$
 need not be properly discontinuous.
 
 (d) If $X$ is Seifert fibred, then $X = Z/G$, and we have an exact sequence corresponding to the principal bundle $ Z \ra Y$:
 $$ 0 \ra \hol_Z^k \ra \Theta_Z \ra \sF_Z \ra 0.$$
 Since the action of $G$ commutes with the action of $T$, the exact sequence descends to $X$, and we have
 $$ 0 \ra \hol_X^k \ra \Theta_X \ra  \sF  \ra 0.$$

 \end{remark}

 As we shall soon see, the above notions of Seifert-fibred manifold, or of pseudo-torus,  and of suspension over a torus
 are not general enough  in order to deal with 
 manifolds with Chern classes trivial in real cohomology, hence we give another definition (important for  the case of projective varieties):

   \begin{definition}
 
   Recall the definition of isogeny given in Remark \ref{isogeny}: a compact complex Manifold  $X$ of dimension $n$ is said to be a 
 \begin{itemize} 
 \item
 (i) {\bf  Manifold Isogenous to a k-Torus product }
 (MITP of order $k$ ), if
 $X$ is isogenous to a product $ Y \times T$, where 
 $T$  is a torus  of dimension $k >0$,    $Y$ is a compact complex manifold. 
 If  $k$ is maximal with this property we say that $X$ is a maximal MITP of order $k$.
 \item
 (ii) {\bf  (Manifold) Strongly Isogenous to a k-Torus product } (SITP of order $k$ ),
 if there  is a torus $T$ of dimension $k >0$,  a compact complex  manifold $Y$,  and a finite group $G$  acting on $T$, and $Y$, such that $G$ 
 acts freely on the product $ Y \times T$ via the induced diagonal action 
 ($g(t,y) : = (g(t), g(y))$ and moreover:
  $$ X = (Y \times T)/G.$$
 If  $k$ is maximal with this property we say that $X$ is a maximal SITP of order $k$.
 \end{itemize}
 
 If $X$ is projective, we shall say that it is a  Variety Isogenous to a k-Torus product.

 \end{definition}
 
 \begin{proposition}
  If $X$ is not uniruled, and $X$ is a compact K\"ahler Manifold, the two notions 
  (i) and (ii) are equivalent in the maximal case.

\end{proposition}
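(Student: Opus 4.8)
The plan is to prove only the substantial implication, namely that a non-uniruled compact K\"ahler $X$ which is a maximal MITP of order $k$ is a SITP of order $k$; the converse needs only that the quotient map $Y\times T\to (Y\times T)/G$ is finite unramified, so that the SITP order is always $\le$ the MITP order, whence the two maximal orders agree. For the substantial direction I would first record that ``non-uniruled'' is an isogeny invariant and is unaffected by products with tori (rational curves in $Y\times T$ project onto rational curves in $Y$, and rational curves lift and descend along finite unramified maps, $\mathbb P^1$ being simply connected), so the companion $Y$ is non-uniruled. By Remark \ref{isogeny} and the stability of finite unramified maps under fibre products, $X$ and $Y\times T$ possess a common connected finite \'etale cover $Z$. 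Since $\pi_1(Z)$ is a finite-index subgroup of $\pi_1(Y)\times\pi_1(T)=\pi_1(Y)\times\Z^{2k}$, the subgroups $N_Y:=\pi_1(Z)\cap\pi_1(Y)$ and $N_T:=\pi_1(Z)\cap\Z^{2k}$ have finite index, commute elementwise and meet trivially, so $N_Y\times N_T\le\pi_1(Z)$; the corresponding cover of $Y\times T$ is a genuine product $Y_0\times T_0$ with $T_0$ a $k$-torus and $Y_0$ a non-uniruled cKM, and it is a finite \'etale cover of $X$.

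\textbf{Step 2 (the automorphism torus of a Galois cover).} Next I would replace $Y_0\times T_0$ by the Galois closure $W\to X$ of $Y_0\times T_0\to X$, with group $G$ acting freely; $W$ is still non-uniruled, so by the Fujiki--Lieberman Theorem \ref{f-l} $\mathrm{Aut}^0(W)$ is a complex torus $S$. Lifting the free translation action of $T_0$ on $Y_0\times T_0$ along the finite covering $W\to Y_0\times T_0$ (first to the simply connected $\mathbb C^k$, then observing the lattice maps into $\mathrm{Aut}(W/(Y_0\times T_0))$, which acts freely) yields a $k$-torus $T_0'$ acting freely and holomorphically on $W$, so $T_0'\le S$ and $\dim S\ge k$. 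Maximality of $k$ forces $\dim S=k$, hence $S=T_0'$: the $S$-orbits are complex subtori, and a positive-dimensional family of them of dimension $m$ would, by isotriviality of the orbit fibration, exhibit $W$ --- hence $X$ --- as isogenous to a product with an $m$-torus, so $m\le k$, while $m\ge k$ from the free $T_0'$-action and then effectiveness of $\mathrm{Aut}^0$ gives $\dim S=m$. Thus $W\to B:=W/S$ is a holomorphic principal $S$-bundle over a compact K\"ahler $B$, and $B$ is non-uniruled (average a pseudo-effective current representing $c_1(K_W)=\pi^*c_1(K_B)$ over the torus fibres, $K_{W/B}$ being trivial).

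\textbf{Step 3 (trivialising and straightening the action).} By the structure theory of K\"ahler principal torus bundles (Blanchard; cf. the discussion of suspensions in Sections 3 and 4), the topological class of $W\to B$ is torsion, so after a finite \'etale cover $B'\to B$ and the corresponding cover $W'$ of $W$ the bundle is trivial, $W'\cong B'\times T'$ with $B'$ non-uniruled and $T'$ a $k$-torus. Enlarging $G$ to a finite group still acting freely on $W'$, and using that $G$ normalises $\mathrm{Aut}^0=$ the subgroup of $T'$-translations, $G$ preserves the two factors up to an affine twist: $g\cdot(b,t)=\bigl(g\cdot b,\ \alpha_g(t)+\psi_g(b)\bigr)$ with $\alpha_g\in\mathrm{Aut}(T')$ independent of $b$ and $\psi_g\colon B'\to T'$ holomorphic, hence of the form $\rho_g\circ\mathrm{alb}_{B'}$ up to a translation absorbed into $\alpha_g$, with $\rho_g\in\mathrm{Hom}(\mathrm{Alb}(B'),T')$. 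The family $(\rho_g)$ is a $1$-cocycle of the finite group $G$ with values in the twisted $G$-module $\mathrm{Hom}(\mathrm{Alb}(B'),T')\otimes\Q$; since $H^1$ of a finite group with coefficients in a $\Q$-vector space vanishes, it is a coboundary, and the change of variables $(b,t)\mapsto(b,\ t+\sigma(\mathrm{alb}_{B'}(b)))$ --- after one further isogeny making $\sigma$ integral --- conjugates the twisted action into a genuine product action. Therefore $X=(Y''\times T'')/G$ with $G$ acting freely and diagonally, $\dim T''=k$; that is, $X$ is a SITP of order $k$, necessarily maximal among SITP orders because the SITP order is $\le$ the MITP order.

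The hard part is Steps 2 and 3: producing a single finite \'etale cover of $X$ that is simultaneously Galois, a product with a $k$-torus, and carries a product --- not merely affine-twisted --- action of the Galois group. Controlling $\mathrm{Aut}^0$ of a Galois closure via maximality settles the first two points; the genuinely delicate point is removing the affine twist, and it is precisely the finiteness of $G$ (the vanishing of $H^1(G,-)$ on $\Q$-vector spaces) that upgrades the conclusion from ``suspension-type'' to a \emph{diagonal} product action.
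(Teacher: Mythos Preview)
Your Steps 1--2 share the paper's two key ingredients: produce a product \'etale cover via the $\pi_1$-argument, and use Fujiki--Lieberman together with the maximality of $k$ to pin down $\mathrm{Aut}^0$ of a suitable Galois cover as a $k$-torus, making the torus fibration canonical. The organization differs: the paper runs an induction on the length of the chain of \'etale maps witnessing the isogeny, handling separately the cases $X\to Y\times T$ and $Y\times T\to X$ at the base step and reducing the inductive step to these, whereas you pass directly to a common cover and then to its Galois closure over $X$. More significantly, the paper's argument (its step (b(1))) stops once a product $Y_1\times T_1$ is shown to be a \emph{Galois} \'etale cover of $X$; it never verifies that the Galois group acts \emph{diagonally} rather than via an affine-twisted action $g\cdot(y,t)=(g\cdot y,\,\alpha_g(t)+\psi_g(y))$. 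Your Step~3 explicitly straightens this twist with the $H^1(G,-\otimes\mathbb Q)=0$ trick, and that is genuine additional content not written in the paper.

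Two details in Step~3 need tightening. First, the appeal to Blanchard is a red herring: the reason the principal $S$-bundle $W\to B$ trivializes after a finite \'etale base change is simply that $W$ is itself a finite \'etale cover of the product $Y_0\times T_0$, so reapplying your Step~1 $\pi_1$-argument to $W\to Y_0\times T_0$ produces a product cover $Y_0''\times T_0''\to W$, and $Y_0''$ (or its image) furnishes the trivializing cover of $B=W/S$; no general structure theorem for K\"ahler torus bundles is needed. Second, ``one further isogeny making $\sigma$ integral'' must be performed on the base, not on $T'$: passing to a quotient of $T'$ changes $X$ rather than producing a cover of it. The clean fix is to replace $B'$ by the characteristic \'etale cover $B''\to B'$ associated to $N\cdot H_1(B',\mathbb Z)$ (so the Galois group enlarges to a finite extension $\tilde G$ of $G$ by the abelian deck group $D$); the elements of $D$ act on $\mathrm{Alb}(B'')$ by translations and trivially on $T'$, hence contribute trivially to the cocycle modulo constants, while the rational coboundary $\sigma$ becomes an honest morphism $\mathrm{Alb}(B'')\to T'$, so the coboundary equation for $\tilde G$ is now solvable integrally and your change of variables goes through.
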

 
 \begin{proof}
 
 Clearly (ii) implies (i), hence it suffices to show that (i) implies (ii),
 and we shall do it by induction on the number $h$ of finite unramified maps which
 make $X$ isogenous to such a product $ Y \times T$.
 
 \smallskip
 
 {\bf Step (a(1))} For $h=1$, assume that there is a finite unramified map $ f : X \ra Y \times T$.
 
 Then, taking the Galois closure $Z$, and letting $G$ 
 be the Galois group, we see that $Z$ is associated to an epimorphism
 $\psi :  \pi_1(Y) \times \pi_1(T) \ra G$, and we denote 
 $G_1: = \psi (\pi_1(Y)), \ G_2: = \psi (\pi_1(T))$. 
 
Then $ G $ is a quotient of $G_1 \times G_2$ by the normal subgroup $G_1 \cap G_2 < G$.

To the epimorphism
 $\phi :  \pi_1(Y) \times \pi_1(T) \ra (G_1 \times G_2)$ corresponds a product Manifold
 $Y_1 \times T_1$, and clearly $ Z = (Y_1 \times T_1) / G_1 \cap G_2$.
  We show now that  $X$ is   strongly isogenous to a product. Indeed  $G_1 \times G_2$  acts on $Y_1 \times T_1$ diagonally
 (and on $T_1$ via translations). Hence $X$, which is     a quotient of  $(Y_1 \times T_1)$
 by another subgroup  $G'$ of $G_1 \times G_2$ containing $G_1 \times G_2$,   is strongly isogenous to a product.
 
 {\bf Step (b(1))} For $h=1$, assume that there is a finite unramified map $ f :  (Y \times T) \ra X$. If $f$ is not Galois, take the Galois closure $Z$,  and apply Step (a(1)) to $Z$: using the same notation for the groups involved,  we find a Galois cover 
 $(Y_1 \times T_1 ) \ra (Y \times T)$ with group $G_1 \times G_2$, and $Z$ is a quotient of $(Y_1 \times T_1 )$ by a subgroup $G'$
 of  $G_1 \times G_2$.
 
Denote by $\Ga$
 the Galois group of $Z \ra X$, and by $G$ the subgroup which is the Galois group of  
$Z \ra (Y \times T)$. 
 
 We claim that the unramified cover $(Y_1 \times T_1 ) \ra X$ is Galois; to establish this claim it suffices to show that the elements $ \ga \in \Ga$ admit lifts to $(Y_1 \times T_1 )$,  because then the group of such  lifts $\Ga'$ admits and exact sequence $ 1 \ra G' \ra \Ga' \ra \Ga \ra 1$, hence $ X = (Y_1 \times T_1 )  / \Ga'$.
 
 Now, $Z$ has a split tangent bundle $ \Theta_Z = \sF \oplus \hol_Z^k$,
 and since $Z$ is not uniruled, it follows from  the Theorem of Fujiki and Lieberman, Theorem \ref{f-l}, 
 and by the maximality of $k$, that $H^0(\Theta_Z) = H^0(\hol_Z^k)$.
 
 Hence the splitting is uniquely determined, and the group $T_2$ maps onto 
 $Aut^0(Z)$. The fibration $ Z \ra (Z / T_2)$ is then isotrivial, and  $(Y_1 \times T_1 )$ is the canonical product pull back under $Y_1 \ra (Z / T_2)$.
 
 Therefore the lifting property holds true.
 
 \bigskip
 
  We assume now by induction that the Theorem is proven for $X_2$ if at most $h-1$ unramified maps are needed
 to make $X_2$   isogenous to a product, and consider $X$ such that
 $h$ such maps are needed. Then there is such an $X_2$  such that there is an \'etale (finite unramified map)
 between $X, X_2$. We have again two cases, the first being where  there is a finite unramified map $ f : X  \ra X_2$,
 the second where there is a finite unramified map $ f : X_2  \ra X$. We proceed to analyse these two cases.

{\bf Step (a(h))} Assume  that $ X_2 = (Y_2 \times T_2)/G_2$, as in (ii), and that there is a finite unramified map $ f : X  \ra X_2$.

Then, taking a component $X'$ of the fibre product $ X \times _{X_2}  (Y_2 \times T_2)$,
and applying  (a(1)) we get that $X'$, hence also $X$, admits $(Y_1 \times T_1)$
as an unramified covering, hence we can conclude because of Step (b(1)).

{\bf Step (b(h))} Assume then that $ X_2 = (Y_2 \times T_2)/G_2$, as in (ii), and that there is a finite unramified map $ f : X_2  \ra X$. 

Then taking the composition we get an unramified map $(Y_2 \times T_2) \ra X$,
and again we are done by Step (b(1)).

 \end{proof}

 The following  are easy important properties of such Manifolds and Varieties Isogenous to a Torus product.
 
 \begin{proposition}
  Let the complex Manifold $X$ of dimension $n$  be  Isogenous to a k-Torus product $Y \times T$
  where
  $ dim (T) = k > 0$.
  
  Then the integral Chern classes $c_i(  Y \times T) \in H^* (Y \times T, \ZZ)$ vanish 
  for $ i \geq n-k +1 $, and the rational Chern classes $c_i(  X) \in H^* (X, \QQ)$ vanish 
  for $ i \geq n-k + 1$.

  Moreover all the Chern numbers of $X$ vanish.
  
  If moreover $X$ is (respectively is isogenous to) a pseudo-torus, or more generally the suspension over a torus,
  or is Seifert fibred,
  then all the integral Chern classes $c_i(  X) \in H^* (X, \ZZ)$ vanish 
  for $ i \geq n-k +1 $ (respectively the corresponding rational Chern classes vanish).

 \end{proposition}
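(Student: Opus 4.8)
The plan is to establish the statements first for the model $M := Y \times T$ by a direct tangent bundle computation, then transport them to an arbitrary $X$ isogenous to $M$ using the isogeny principle (Remark~\ref{isogeny}), and finally handle the pseudo-torus, suspension and Seifert-fibred cases by exhibiting a trivial subbundle of $\Theta_X$. \emph{Model case.} Since $T$ is a complex torus of dimension $k$ it is parallelizable, $\Theta_T \cong \hol_T^k$, so writing $p : M \to Y$ for the projection one has $\Theta_M \cong p^*\Theta_Y \oplus \hol_M^k$, whence $c(\Theta_M) = p^* c(\Theta_Y)$. As $\dim Y = n-k$, this total Chern class lies in $\bigoplus_{j \le n-k} H^{2j}(M,\ZZ)$, which gives the integral vanishing $c_i(M) = 0$ for $i \ge n-k+1$. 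For the Chern numbers, any weighted monomial of weight $n$ in the classes $c_i(M) = p^* c_i(Y)$ is the $p^*$-pullback of a class in $H^{2n}(Y,\ZZ) = 0$, hence vanishes, and so does its integral over $M$.

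\emph{Passage to $X$.} For a finite unramified $f : Z \to W$ of compact complex manifolds one has $\Theta_Z \cong f^*\Theta_W$, the pullback $f^* \colon H^*(W,\QQ) \to H^*(Z,\QQ)$ is injective (it has a left inverse given by $\tfrac{1}{\deg f}$ times the transfer map), and $\int_Z f^*\alpha = (\deg f)\int_W \alpha$ for $\alpha$ of top degree. Hence, for each fixed $i$, the property $c_i(\cdot) = 0 \in H^{2i}(\cdot,\QQ)$ and the property ``all Chern numbers vanish'' are invariants of the isogeny equivalence relation. Since $X$ is isogenous to $M = Y \times T$, the model case yields $c_i(X) = 0 \in H^{2i}(X,\QQ)$ for $i \ge n-k+1$ and all Chern numbers of $X$ equal to zero.

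\emph{Pseudo-tori, suspensions, Seifert fibrations.} In each of these cases I claim $\Theta_X$ sits in a short exact sequence $0 \to \hol_X^k \to \Theta_X \to \sF \to 0$ with $\rk \sF = n-k$; by the Whitney formula $c(\Theta_X) = c(\sF)$, which is concentrated in degrees $\le 2(n-k)$, forcing $c_i(X) = 0 \in H^{2i}(X,\ZZ)$ for $i \ge n-k+1$. For a suspension $X = (\CC^k \times Y)/\Lam$ (the pseudo-torus being the case $\im \rho$ finite), $\Lam$ acts on the first factor through translations, which act trivially on the canonical trivialization of $\Theta_{\CC^k}$; hence the split sequence $0 \to \hol^k \to \Theta_{\CC^k \times Y} \to \pi_Y^*\Theta_Y \to 0$ is $\Lam$-equivariant and descends to $X$. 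For $X = Z/G$ Seifert-fibred over $Y$ with fibre a torus $T$, the vertical bundle of the principal $T$-bundle $Z \to Y$ is canonically $Z \times \mathfrak{t} \cong \hol_Z^k$, and since $G$ commutes with the $T$-action it carries fundamental vector fields to fundamental vector fields, so preserves this trivialized subbundle; the resulting sequence descends to $X$. When $X$ is only isogenous to such a $Z$, combine the integral statement just obtained with the isogeny principle above to get the rational one.

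\emph{Main difficulty.} No step is deep; the only point requiring care is the equivariant descent in the last step — verifying that the finite group (or the lattice $\Lam$) preserves the canonical trivialization of the flat, resp.\ vertical, part of the tangent bundle — together with the bookkeeping that distinguishes the integral conclusion, available when $\Theta_X$ itself carries a trivial subbundle, from the merely rational conclusion in the isogeny cases.
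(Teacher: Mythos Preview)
Your proof is correct and follows essentially the same route as the paper: compute $c(\Theta_{Y\times T})=p^*c(\Theta_Y)$ from the trivial summand $\hol^k$, kill the Chern numbers because the relevant classes land in $p^*H^{2n}(Y)=0$, transport to $X$ by the isogeny principle, and in the pseudo-torus/suspension/Seifert cases use the exact sequence $0\to\hol_X^k\to\Theta_X\to\sF\to0$ together with Whitney. The only cosmetic difference is that you argue the isogeny step via the transfer map and the degree formula for integrals, whereas the paper phrases it as $H^*(X,\QQ)=H^*(Y\times T,\QQ)^G$; and you re-derive the exact sequences, while the paper simply quotes them from the preceding Remark.
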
 
 
 \begin{proof}
 Since the tangent bundle of $Y \times T$ is the  direct sum of the pull back of the
 tangent bundle of $Y$ with the pull back of the
 tangent bundle of $T$ which is trivial, the total Chern class of $Y \times T$ 
 is the pull back of  the total Chern class of $Y $, and this proves the first assertion.
 
The second assertion follows since the Chern classes of $X$ pull back to the Chern classes 
of the product $Y \times T$, and $H^*(X, \QQ) =  H^* (Y \times T, \QQ)^G$.

The third  assertion follows since any isobaric polynomial of weight $n$ in the Chern classes of $X$
is a rational multiple of the same  isobaric polynomial of weight $n$ in the Chern classes of $Y$,
and we are assuming $k >0$.

The last assertion follows since the tangent bundle of $X$ either splits as $\Theta_X = \hol_X^k \oplus \sF$,
or has a bundle  exact sequence $$ 0 \ra \hol_X^k \ra \Theta_X  \ra  \sF \ra 0.$$
 
 \end{proof}
 
 \section{On the non K\"ahler case}
 In the realm of compact complex manifolds, the Manifolds $X$ with a holomorphically trivial tangent bundle $\Theta_X \cong \hol_X^n$ are called the {\bf parallelizable Manifolds}.

 By a Theorem of Wang \cite{wang} every parallelizable Manifold
 is a quotient $X =  \sG / \Lam$, where $\sG$ is a  simply connected complex Lie group 
 and $\Lam$ is a cocompact discrete subgroup.
 
 Since every Manifold isogenous to a parallelizable Manifold has trivial 
 real Chern classes, it would be interesting to study this class in more detail
 (in the K\"ahler case the parallelizable Manifolds are the complex tori,
 and the Manifolds isogenous to a torus are the Hyperelliptic Manifolds).
 
 \begin{proposition}
 Every Manifold $Y$  isogenous to a parallelizable Manifold
$X' =  \sG / \Lam'$ is obtained from another parallelizable Manifold $X =  \sG / \Lam$
dividing by the action of a finite group $G$ acting freely on $X$.

$G$ is a group of affine transformations of $X$, 
of the form $f(x) = g F(x)$ where $ g \in \sG$ and $F$ has a fixed point $x_0$ 
and has derivative $DF$ determined by the value of the derivative at $x_0$.

The quotient Manifold $Y : = X / G$ will be called a {\bf 
twisted hyperelliptic Manifold}.

 \end{proposition}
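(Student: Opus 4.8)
The plan is to reduce the isogeny to a single common étale covering, observe that parallelizability is inherited by that covering, and then determine the covering group by exploiting the Lie‑theoretic rigidity of compact parallelizable manifolds.

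\textbf{Step 1: reduction to a common covering.} A finite unramified holomorphic map of compact complex manifolds is proper, open and a local biholomorphism, hence a finite (unbranched) covering map; since the isogeny relation is generated by such maps, an easy zig‑zag induction — completing a chain by taking connected components of fibre products of étale maps, which stay compact — shows that ``$Y$ isogenous to $X'$'' is equivalent to the existence of a connected compact complex manifold $W$ with finite étale maps $W \ra Y$ and $W \ra X'$. Replacing $W$ by the Galois closure of $W \ra Y$ (still finite étale over $Y$, and still mapping finite étale to $X'$ through $W$), I may assume $W \ra Y$ is Galois with finite group $G$, so that $Y = W/G$ with $G$ acting freely. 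Pulling back $\Theta_{X'} \cong \hol_{X'}^{\,n}$ along $W \ra X'$ shows $\Theta_W$ is trivial, i.e. $W$ is parallelizable; by Wang's theorem $W = \sG_1/\Lam_1$.

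\textbf{Step 2: the Lie group is intrinsic.} Replacing the given $\sG$ by its universal cover (which only changes the presentation of $X'$), I may assume $\sG$ is simply connected, so that $\sG \ra X'$ is the universal covering, $\pi_1(X') = \Lam'$, and $\Lam'$ acts on $\sG$ by left translations. Since $W \ra X'$ is finite étale, $\sG$ is also the universal cover of $W$, with $\Lam := \pi_1(W)$ a finite‑index subgroup of $\Lam'$ (hence a discrete cocompact subgroup of $\sG$ acting by left translations) and $W = \sG/\Lam$; and since $W \ra Y$ is finite étale, $\sG$ is the universal cover of $Y$ as well, with $\Gamma := \pi_1(Y)$ a discrete cocompact group of biholomorphisms of $\sG$ containing $\Lam$ as a finite‑index normal subgroup (normal because $W \ra Y$ is Galois), $Y = \sG/\Gamma$, $G = \Gamma/\Lam$. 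The algebraic fact that makes this robust is: for any compact parallelizable $M = \sG/\Lam''$, the left‑invariant vector fields descend to a canonical global frame of $\Theta_M$, and as $M$ is compact and connected $H^0(M,\Theta_M)$ is exactly the space of constant‑coefficient combinations of this frame; hence $H^0(M,\Theta_M)$ with its Lie bracket is canonically the Lie algebra $\mathfrak g$, independently of the presentation, and (being simply connected) so is $\sG$.

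\textbf{Step 3: the affine lemma and conclusion.} The heart of the proof is: every biholomorphism $\s$ of a compact parallelizable manifold lifts to an \emph{affine} transformation $x \mapsto g\,\phi(x)$ of its universal cover $\sG$, with $g \in \sG$ and $\phi \in \Aut(\sG)$ a holomorphic group automorphism. Indeed $\s_*$ preserves $H^0(\Theta) = \mathfrak g$, so the lift $\wave\s$ carries left‑invariant fields to left‑invariant fields; composing with $L_{\wave\s(e)^{-1}}$ gives $\psi$ with $\psi(e) = e$ and $\psi_*\xi_v = \xi_{\al(v)}$ for a fixed $\al \in \Aut(\mathfrak g)$. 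Since the flow of a left‑invariant field $\xi_v$ is the right translation $x \mapsto x\exp(tv)$, the intertwining $\psi \circ R_{\exp tv} = R_{\exp t\al(v)} \circ \psi$, evaluated along products $\exp(t_1 v_1)\cdots\exp(t_k v_k)$ (which generate the connected group $\sG$), forces $\psi$ to be the group homomorphism integrating $\al$, an isomorphism because $\al$ is; thus $\wave\s = L_g \circ \phi$. Applying this to each $\ga \in \Gamma$, viewed via its descent to a biholomorphism of $W = \sG/\Lam$: the lifts of that biholomorphism to $\sG$ form a coset of $\Lam$, consisting of left translations (which are affine) together with an affine map supplied by the lemma, so $\ga$ itself is affine, $\ga(x) = g_\ga\,\phi_\ga(x)$. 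Since $\Lam$ acts by translations and is normal in $\Gamma$, the finite group $G = \Gamma/\Lam$ therefore acts freely on $X := W = \sG/\Lam$ through such affine transformations of its universal cover; each $\phi_\ga$ fixes $e$ and, being a homomorphism of the simply connected group $\sG$, is determined by its derivative $d(\phi_\ga)_e$, which is exactly the constant value of $D\phi_\ga$ in the left‑invariant frame — the meaning of ``$DF$ is determined by the derivative at the fixed point.'' This exhibits $Y = X/G$ with $X = \sG/\Lam$ parallelizable and $G$ a finite group of affine transformations acting freely, as required; and conversely every such $X/G$ is visibly isogenous to the parallelizable $X$.

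\textbf{Main obstacle.} The delicate point is the affine lemma in Step 3: one must know both that a compact parallelizable manifold has exactly the ``expected'' holomorphic tangent fields (so that the Lie algebra, hence $\sG$, is canonically attached to it and transforms functorially under biholomorphisms) and how to bootstrap ``preserves the canonical frame'' into ``is globally affine'' via the exponential map. Alongside this, the bookkeeping of Steps 1--2 — keeping the zig‑zag and Galois‑closure reductions inside compact complex manifolds, and passing cleanly to a simply connected $\sG$ without losing track of which subgroups act by translations — must be carried out with some care.
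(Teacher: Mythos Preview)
Your proof is correct and follows essentially the same strategy as the paper's: reduce the isogeny zig--zag to a single Galois \'etale cover of $Y$ by a parallelizable manifold $X=\sG/\Lam$ (you use fibre products, the paper uses intersections of finite--index subgroups of $\pi_1$; these are equivalent), then show the deck group consists of affine maps. The one substantive difference is that the paper argues the affine lemma in one line --- $DF$ is a global section of $\End(\hol_X^n)$ on a compact manifold, hence constant --- and leaves implicit why ``constant derivative in the left--invariant frame'' forces the lift to be a group automorphism composed with a translation; your flow argument ($\psi\circ R_{\exp tv}=R_{\exp t\al(v)}\circ\psi$, then generate $\sG$ by products of exponentials) spells out exactly this missing step, so your treatment is in fact more complete on that point.
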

 
 \begin{proof}
 If $Y$ is isogenous to $X'$, then $Y$ has also $\sG$ as universal cover,
 as it is trivial to see.
 
 Hence we can write $Y = \sG / \Ga$, where $\Ga$ acts freely on $\sG$
 and with compact quotient.
 
 Observe that  any isogeny amounts, up to isomorphism, to a finite index containment
 between fundamental groups, viewed as discrete groups of automorphisms of $\sG$ (since $\sG$ is simply connected).
 
 It is a general fact that the intersection $H\cap K$ of two finite index subgroups $H, K$ of
 a group $\Ga'$ is again of finite index in $\Ga'$  (it is the stabilizer of the orbit of $ H \times K$ on $\Ga' / H \times \Ga' / K$
 
 Hence, if  we consider subgroups $\Ga_i < \Ga', \ i=1, \dots, 5,$ such that  $\Ga_2$ is of finite index in $\Ga_1$ and $\Ga_3$, 
and $\Ga_4$ is of finite index in $\Ga_3$ and $\Ga_5$,
 then $\Ga_2 \cap \Ga_4$ is of finite index in $\Ga_3$, hence also 
 in  $\Ga_2$ , $\Ga_4$, $\Ga_1$ and $\Ga_5$.
 
 Therefore, since we can always assume  that  the number of direct relations  
 yielding the isogeny is even (introducing an equality step),
 we can reduce to the case where $ \Ga$ and $\Lam'$ contain a finite index subgroup in
 both of them, call it $\Lam''$. By replacing $ X'' = \sG / \Lam''$ by the Galois closure
 of $X'' \ra Y$, we obtain $Y  = X / G$ as desired.
 
 Now, if $f \in G$, there exists $g \in \sG$ such that $ F : = g^{-1} f$ has a fixed point 
 $x_0 : = \Lam$ on $X$. The derivative of $F$ is a holomorphic section of the trivial bundle $ DF \in H^0( End (\hol_X^n))$
 gotten by the action on the space of left invariant vector fields: since $X$
 is compact, $DF$ is constant, hence $ f (x) = g F(x)$ is affine.
 
 \end{proof}
 
 Of course each  such automorphism $F$ lifts to an automorphism $\tilde{F}$ 
 of $\sG$ which must have the property  of fixing the identity and of 
 sending $\Lam$ to $\Lam$, since we must have
 $$ \tilde{F} (g \la ) =  \tilde{F} (g ) \pi_1(F) (\la) \Rightarrow \tilde{F} (\la ) = \pi_1(F) (\la),$$
 this is exactly as it happens for automorphisms of complex tori.
 
 We give now a simple example of such Manifolds.
 
 \begin{proposition}
 Consider the Iwasawa Manifold  $X =  \sG / \Lam$, where $\sG$ is the complex Lie group of 
 unipotent $ 3 \times 3$ complex matrices, that is, upper triangular matrices and with diagonal entries equal to $1$.
 
 And let $\Lam$ be the subgroup of matrices with upper-diagonal entries belonging to the ring $\ZZ[i]$
 of Gaussian integers.
 
 Then we define the following automorphism of $X$, such that 
 $$ r [(z_{12}, z_{21},z_{13})] : = [(i z_{12},  z_{21} + 1/4 , iz_{13} )].$$

  Then $r$ has order $4$, and generates a group $G \cong \ZZ/4$ acting freely on $X$.
  The quotient is a twisted Hyperelliptic Manifold with $H^0 (\Omega^1_Y) $
  of dimension $1$ and generated by a closed holomorphic 1-form..
 \end{proposition}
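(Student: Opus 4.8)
The plan is to realize $r$ as the transformation of $X=\sG/\Lam$ induced by an affine automorphism $\tilde r$ of the Heisenberg group $\sG$, written in the normal form $\tilde r = L_g\circ F$ furnished by the Proposition on twisted hyperelliptic Manifolds proved above. Here $F$ is the Lie group automorphism of $\sG$ which rescales the entry $z_{12}$ by $i$ and leaves $z_{21}$ fixed; the Heisenberg commutation relation then forces $F$ to rescale $z_{13}$ by the product $i\cdot 1=i$, and this compatibility of the three rescalings is precisely the statement that $F$ is a group automorphism. Also $L_g$ is left translation by the element $g\in\sG$ whose unique nonzero above-diagonal entry is $z_{21}=1/4$. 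Since multiplication by $i$ preserves $\ZZ[i]$ we have $F(\Lam)=\Lam$, so $F$ descends to $X$; inspecting in addition the conjugates $g\mu g^{-1}$ for $\mu\in\Lam$ one verifies that $\tilde r$ normalizes $\Lam$, whence $r$ is a well-defined biholomorphism of $X$.

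Next I would establish the order and the freeness of the action. Because $i^{4}=1$ one has $F^{4}=\Id$, while the four successive translations by $1/4$ in the $z_{21}$-direction add up to a translation by $1\in\ZZ[i]$; iterating $\tilde r$ and bookkeeping the commutator contributions one finds that $\tilde r^{4}$ induces the identity on $X$, so $r^{4}=\Id$, whereas $\tilde r,\tilde r^{2},\tilde r^{3}$ do not descend to the identity, since $1/4,1/2,3/4\notin\ZZ[i]$. Hence $G:=\langle r\rangle\cong\ZZ/4$. For the freeness the quickest argument uses the base elliptic curve: the coordinate $z_{21}$ is a surjective homomorphism $\sG\to\CC$ carrying $\Lam$ onto $\ZZ[i]$, so it induces a surjective holomorphic map $p:X\to E:=\CC/\ZZ[i]$ with $p\circ r=\tau\circ p$, where $\tau$ is translation by $1/4$ on the elliptic curve $E$. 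Since $\tau$ is a fixed-point-free automorphism of order $4$, none of $r,r^{2},r^{3}$ can fix a point of $X$; thus $G$ acts freely, $Y:=X/G$ is a compact complex manifold, and --- $X$ being parallelizable and $X\to Y$ being \'etale --- $Y$ is a twisted hyperelliptic Manifold in the sense introduced above.

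It then remains to compute the holomorphic $1$-forms on $Y$. The space $H^{0}(\Omega^{1}_{X})$ is spanned by the three left-invariant forms $dz_{12}$, $dz_{21}$ and $\om:=dz_{13}-z_{12}\,dz_{21}$. A direct pullback computation --- using that left translations act trivially on left-invariant forms, together with the Leibniz rule for $\om$ --- gives $\tilde r^{*}(dz_{12})=i\,dz_{12}$, $\tilde r^{*}(dz_{21})=dz_{21}$ and $\tilde r^{*}\om=i\,\om$. As $X\to Y$ is \'etale Galois with group $G$, one gets $H^{0}(\Omega^{1}_{Y})=H^{0}(\Omega^{1}_{X})^{G}=\CC\cdot dz_{21}$, which is one-dimensional; and $dz_{21}$ is closed, being the pullback under $p$ of the invariant $1$-form of $E$. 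This yields the final assertion.

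The step I expect to be the main obstacle is the bookkeeping with the non-abelian multiplication of $\sG$: checking that the composite $L_g\circ F$ genuinely normalizes the lattice $\Lam$, and that $\tilde r^{4}$ acts trivially on $X$, requires tracking carefully how the translation component $L_g$ interacts with the commutator term in the group law. Once the action of $G$ is known to be free and of order $4$, identifying $Y$ as a twisted hyperelliptic Manifold and reading off $H^{0}(\Omega^{1}_{Y})$ from the $G$-action on the left-invariant forms is routine.
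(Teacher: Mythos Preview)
Your strategy matches the paper's: verify that $r$ is well-defined on cosets, deduce freeness from the action on the $z_{21}$-coordinate (your projection $p:X\to E=\CC/\ZZ[i]$ is exactly this idea), and then compute the $G$-action on a basis of $H^{0}(\Omega^{1}_{X})$. Your factorization $\tilde r=L_{g}\circ F$, with $F$ a Lie-group automorphism satisfying $F(\Lambda)=\Lambda$, is a pleasant conceptual repackaging of the paper's coordinate check: since $\tilde r(z\lambda)=gF(z)F(\lambda)=\tilde r(z)\,F(\lambda)$, well-definedness is immediate and no inspection of conjugates $g\mu g^{-1}$ is needed.

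There is one concrete slip. With $X=\sG/\Lambda$ (so $\Lambda$ acts on the \emph{right}, as the paper's verification $r(z\lambda)\in r(z)\Lambda$ confirms), the holomorphic $1$-forms on $X$ arise from the \emph{right}-invariant forms on $\sG$, not the left-invariant ones. Hence the correct third basis element is the paper's $\omega_{3}=dz_{13}-z_{21}\,dz_{12}$, not your $\omega=dz_{13}-z_{12}\,dz_{21}$; one then computes $r^{*}\omega_{3}=i\,\omega_{3}-\tfrac{i}{4}\,\omega_{1}$, the extra term coming precisely from the translation by $1/4$. Your tidy formula $\tilde r^{*}\omega=i\,\omega$ is an artifact of using a form that does not descend to $X$. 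Fortunately the invariant subspace is unchanged: it is $\CC\cdot dz_{21}$ (the paper's ``$\omega_{1}$'' in the last line of its proof is evidently a slip for $\omega_{2}$), so your conclusion survives.

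Finally, you are right to single out the order-$4$ verification as the delicate step. Note that since $F(g)=g$, the maps $F$ and $L_{g}$ commute, so $\tilde r^{4}=L_{g^{4}}=L_{(0,1,0)}$ is \emph{left} translation by a lattice element, whereas the quotient is by \emph{right} translations; and $z^{-1}(0,1,0)z=(0,1,-z_{12})$. Thus the ``bookkeeping of commutator contributions'' you allude to is genuinely nontrivial here, and neither your sketch nor the paper's proof spells it out --- so this point deserves more care than either supplies.
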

 
 \begin{proof}
 The automorphism is well defined, since the product $z \la$ has coordinates
 $$ (z_{12} + \la_{12}, z_{21} + \la_{21}, z_{13} +\la_{13} + z_{12} \la_{21}) ,$$
 hence  we infer $ [ r (z)] = [ r (z \la)] $ since 
 
 $$ [ r (z \la)] = [ (i z_{12} + i \la_{12}, z_{21} + \la_{21} + 1/4, i z_{13} + i \la_{13} + i z_{12} \la_{21})]=$$
 $$ =
 [ (i z_{12}, z_{21} + \la_{21} + 1/4, i z_{13}  + i z_{12} \la_{21})] = [ r(z) \la'], $$
 where $  \la' = (0,\la_{21}, 0) $.
 
 That the action is free follows since for the second coordinate 
 $$ z_{21} \mapsto z_{21} + 1/4.$$
 
 It is known that  $H^0 (\Omega^1_X)$ is generated by 
 $$ \om_1 : = d z_{12} , \om_2 : = d z_{21},\om_3 := d z_{13} -  z_{21} d z_{12} ,$$
 which are mapped by $r$  to  $ i \om_1  , \om_2 , i \om_3 - ( i/4) \om_1 $.
 
 Hence $H^0 (\Omega^1_X)^G$ is generated by $\om_1 $.
 \end{proof} 
 
 We leave aside here the general discussion of the structure of twisted 
 Hyperelliptic Manifolds, but we raise a question.
 
 \begin{question}
 The  twisted 
 Hyperelliptic Manifolds, being isogenous to parallelizable Manifolds,
  are examples of compact complex manifolds with all the real Chern classes 
  $c_{i, \RR}(X) =0 , \forall i$.
  
  Are there other examples?
 \end{question}

\bigskip
 
 We can also define  another general class of Manifolds,
 which we call Suspensions over a parallelizable Manifold.

 \begin{definition}
 
 (S-P-M) A compact complex manifold $X$ is called  a {\bf suspension over a 
 parallelizable Manifold} $Z =  \sG / \Lam$ 
if there is a compact complex manifold $Y$ and a homomorphism $\rho : \Lam \ra Aut(Y)$
 such that
 $$ X =( \sG \times Y) / \Lam, \la (z,y) : = ( z  \la ,  y \rho(\la)) .$$

 \end{definition}
 
 \begin{proposition}
 If $X$ is   a suspension over a 
 parallelizable Manifold $Z$, then we have a splitting of the tangent bundle 
 $$\Theta_X = \hol_X^k \oplus \sF,$$
 where $k = dim (Z)$.
 \end{proposition}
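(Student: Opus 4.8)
The plan is to pass to the covering $\tilde X := \sG \times Y$, on which $\Lam$ acts freely and properly discontinuously via $\la\cdot(z,y) = (z\la,\,y\rho(\la))$ with $X = \tilde X/\Lam$, to produce there a $\Lam$-invariant splitting of $\Theta_{\tilde X}$ into a holomorphically trivial rank-$k$ subbundle and a complement, and then to push the splitting down to $X$.

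First I would use the product structure of $\tilde X$ to write
$$ \Theta_{\tilde X} \;=\; p_1^*\Theta_{\sG} \;\oplus\; p_2^*\Theta_Y, $$
where $p_1 : \tilde X \ra \sG$ and $p_2 : \tilde X \ra Y$ are the projections. The deck transformation of $\tilde X$ attached to $\la \in \Lam$ is the product map $R_\la \times \rho(\la)$, with $R_\la$ right translation by $\la$ on $\sG$; its differential is therefore block diagonal for the above decomposition, so both $p_1^*\Theta_\sG$ and $p_2^*\Theta_Y$ are $\Lam$-invariant subbundles of $\Theta_{\tilde X}$. Hence they descend to subbundles $\sE, \sF \subset \Theta_X$ with
$$ \Theta_X \;=\; \sE \oplus \sF, \qquad \sE := (p_1^*\Theta_\sG)/\Lam,\quad \sF := (p_2^*\Theta_Y)/\Lam, $$
and $\sE$ has rank $\dim\sG = \dim Z = k$.

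Next I would trivialize $\sE$. Since $\sG$ is a complex Lie group of dimension $k$, the right-invariant holomorphic vector fields on $\sG$ form a $k$-dimensional vector space and give a global holomorphic frame $w_1,\dots,w_k$ of $\Theta_\sG$. Each $w_i$, being right-invariant, is in particular invariant under $R_\la$ for every $\la\in\Lam$; pulled back by $p_1$ it is a section of $p_1^*\Theta_\sG \subset \Theta_{\tilde X}$ invariant under the full $\Lam$-action, since the $\rho(\la)$-factor of that action moves only the $Y$-coordinate and so acts trivially on vector fields pulled back along $p_1$. Thus $p_1^*w_1,\dots,p_1^*w_k$ descend to global holomorphic vector fields on $X$ which at every point are $\CC$-linearly independent, i.e. a holomorphic frame of $\sE$. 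This gives $\sE \cong \hol_X^k$, and combined with the previous paragraph, $\Theta_X \cong \hol_X^k \oplus \sF$.

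The one point that needs a little care is the choice of \emph{right}-invariant rather than left-invariant vector fields: the parallelizing frame of $\Theta_\sG$ must be invariant under exactly the translations occurring in the $\Lam$-action, and for the action $z\mapsto z\la$ of the definition this means right translations. Left-invariant vector fields would only be invariant up to the adjoint action of $\Lam$, so they would descend to $\sE$ only after a twist; picking them would obscure (but not contradict) the triviality of $\sE$. Everything else is routine: the invariant subbundles descend because $\Lam$ acts freely, and the resulting objects are holomorphic vector bundles of the expected ranks.
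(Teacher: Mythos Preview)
Your proof is correct and follows essentially the same approach as the paper: split $\Theta_{\tilde X}$ via the product structure, observe that the $\Lam$-action is block diagonal so both summands descend, and then trivialize the first summand. The only cosmetic difference is that the paper phrases the trivialization as ``$\sE$ is the pull-back of $\Theta_Z$ along the projection $X\to Z=\sG/\Lam$, and $Z$ is parallelizable'', whereas you descend an explicit right-invariant frame from $\sG$; these are the same argument, since the parallelizing frame of $Z$ is exactly the one induced by right-invariant fields on $\sG$, and your remark on right- versus left-invariant fields makes explicit a point the paper leaves implicit.
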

 \begin{proof}
 The splitting  follows right away since $\Lam$ has a product action.
 
 Moreover, the first summand is trivial because  $Z =  \sG / \Lam$ is
 parallelizable and the first summand is just the pull-back of the tangent bundle 
 $\Theta_Z$ for the projection $ X \ra Z$, with fibres isomorphic to $Y$.
 
 \end{proof}
 
 \begin{remark}
 In the case where $\rho$ has finite image, we get that, $\Lam'$ being 
 defined as $\Lam' : = ker (\rho)$,
 then $X$ is isogenous to $Z' \times Y$, where $Z' : = \sG / \Lam'$.
 
 And we get then triviality of the Chern numbers of $X$.
 \end{remark}
 
In the next section we shall show how, assuming that $X$ is  K\"ahler ,  the occurrence of such a splitting 
$\Theta_X = \hol_X^k \oplus \sF$ is understood as the structure of a suspension over a torus.

It is interesting  to see whether a similar characterization of suspensions over  a parallelizable Manifold  holds also for compact
complex manifolds. 

For this purpose one has to  take $k$ maximal, and the first key point would be  to see whether  
$ H^0 ( \hol_X^k) \subset H^0(\Theta_X)$ is a Lie subalgebra $ \mathfrak G$: then we would have an action
on $X$  of the 
simply connected Lie group $\sG$ associated to $ \mathfrak G$.
 
 \section{Partially framed and co-framed manifolds}
 
 We begin with a simple definition yielding complex Manifolds with all
 the last $k$  integral Chern classes equal to zero.
 
  \begin{definition}
 
 A complex manifold  $X$ of dimension $n$ is said to be $k$-tangentially framed,
 or simply $k$-framed,
 if the holomorphic tangent bundle $\Theta_X$ admits a trivial subbundle  $ \cong \hol_X^k$,
 and where $k > 0 $ is maximal.
 
  A complex manifold  $X$ of dimension $n$ is said to be $k$-cotangentially framed,
 or simply $k$-co-framed,
 if the holomorphic cotangent bundle $\Omega^1_X$ admits a trivial subbundle  $ \cong \hol_X^k$,
 and where $k > 0 $ is maximal.

 \end{definition}
 
 In the rest of the section we shall use the results of \cite{lieberman}, \cite{fujiki}, \cite{amoros},
 often for simplicity we might refer to the exposition given in the last paper.

  \begin{proposition}\label{nozero}

  Assume that $X$ is a $k$-coframed compact K\"ahler manifold  and let  
  $W \subset h^0(\Omega^1_X) $  be the corresponding   maximal  vector subspace consisting entirely 
  of nowhere vanishing holomorphic 1-forms (of dimension  $k \geq 1$). Then $W$ determines 
  an analytically  integrable foliation with trivial normal bundle.
  
  i) If moreover the subspace $W$ corresponds to a quotient torus $T'$ of $Alb(X) = H^0(\Omega^1_X)^{\vee} / (H_1(X, \ZZ)/ Tors)$,  
  then  the foliation is algebraically integrable, consisting of the fibres of $ \Psi : X \ra T'$,
which is a differentiable fibre bundle. 
  
ii) If we have a splitting $\Omega^1_X \cong \hol_X^k \oplus \sF^{\vee}$, then $\Psi$ is a holomorphic fibre bundle with fibre $Y$.
 
 iii) If moreover $Y$ has   finite automorphism group,
  then $X$ is a   k-Pseudo-Torus product $X = (Y \times T)/G$
  with 
  $ dim (T) = k > 0$.

 \end{proposition}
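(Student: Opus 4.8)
The plan is to unwind the definitions in three stages, treating (i), (ii), (iii) in turn, and at each stage converting a piece of linear-algebraic data on differential forms into geometric data (a foliation, then a fibration, then a product structure up to isogeny).

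For the preliminary claim that $W$ determines an analytically integrable foliation with trivial normal bundle: every $\omega \in W$ is holomorphic, hence $d$-closed on a compact Kähler manifold, so $d\omega = 0$ for all $\omega \in W$; this is precisely the integrability (Frobenius) condition for the distribution $\mathcal{D} = \bigcap_{\omega \in W} \ker \omega \subset \Theta_X$, which has rank $n-k$ since the $\omega$'s are everywhere linearly independent (they span a trivial subbundle $\hol_X^k \subset \Omega^1_X$). The conormal bundle of the foliation is exactly this trivial $\hol_X^k$, so the normal bundle is $\hol_X^k$ as well; this proves the first sentence.

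For (i): if $W$ corresponds to a subtorus, i.e. the composite $H_1(X,\ZZ) \to H^0(\Omega^1_X)^\vee \to W^\vee$ has image a lattice of full rank $2k$ in $W^\vee$, then integrating the forms in $W$ produces a well-defined holomorphic map $\Psi : X \to T'$ to the $k$-dimensional quotient torus $T' = W^\vee/(\text{image lattice})$, a quotient of $\Alb(X)$ composed with the Albanese map. The leaves of the foliation are then the connected components of the fibres of $\Psi$, so the foliation is algebraically integrable. To see $\Psi$ is a differentiable fibre bundle: $d\Psi$ is surjective everywhere because the pull-backs of the coordinate $1$-forms of $T'$ are exactly a basis of $W$, which is nowhere vanishing and of rank $k = \dim T'$; so $\Psi$ is a proper holomorphic submersion, and Ehresmann's theorem gives that it is a differentiable (indeed $C^\infty$) fibre bundle. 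This is the analogue, via the $k$-coframing hypothesis, of the classical fact that a submersion with values in a torus coming from independent closed $1$-forms is a bundle.

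For (ii): if in addition $\Omega^1_X \cong \hol_X^k \oplus \sF^\vee$ splits, then the trivial summand is spanned by the pull-backs $\Psi^*(dz_1),\dots,\Psi^*(dz_k)$, and dually $\Theta_X \cong \hol_X^k \oplus \sF$ with the trivial summand mapping isomorphically onto $\Psi^*\Theta_{T'}$ at every point; this provides a holomorphic flat connection (a horizontal distribution $\sF$ which is $\Psi$-related to $\Theta_{T'}$, and which is integrable because $\sF$ is a holomorphic subbundle complementary to the relative tangent bundle with the required compatibility). Lifting a basis of the period lattice of $T'$ along this connection, or equivalently using that the monodromy of the bundle acts by biholomorphisms of the fibre $Y$, upgrades the differentiable bundle of (i) to a holomorphic fibre bundle $\Psi : X \to T'$ with fibre $Y$; here one invokes the standard machinery (Fujiki--Lieberman, cf. \cite{amoros}) identifying such split situations with suspensions over a torus, $X = (\CC^k \times Y)/\Lam$ with $\Lam$ acting by $\lambda(z,y) = (z+\lambda, \rho(\lambda)(y))$.

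For (iii): if moreover $\Aut(Y)$ is finite, then the monodromy representation $\rho : \Lam \to \Aut(Y)$ (equivalently $\rho : \pi_1(T') \to \Aut(Y)$) has finite image; as recorded in Remark following definition (IV), a suspension over a torus whose structural homomorphism has finite image is a Pseudo-torus. Concretely, set $G := \operatorname{im}(\rho)$ and $\Lam' := \ker(\rho)$, which is of finite index; then $\CC^k/\Lam' =: T$ is again a $k$-torus, the suspension becomes the trivial bundle $\CC^k \times Y / \Lam' = T \times Y$ after this finite base change, and $X = (T \times Y)/G$ with $G$ acting diagonally, via translations on $T$ and via the injection $G \hookrightarrow \Aut(Y)$ on $Y$; freeness of the $G$-action on the product follows since $G$ already acts freely on $T$ by nontrivial translations. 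Hence $X$ is a $k$-Pseudo-Torus product with $\dim T = k > 0$, as claimed.

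The main obstacle is the passage from (i) to (ii): upgrading the Ehresmann $C^\infty$-bundle to a \emph{holomorphic} locally trivial fibre bundle. The splitting $\Omega^1_X \cong \hol_X^k \oplus \sF^\vee$ gives a candidate holomorphic horizontal distribution, but one must check that this distribution is integrable and that its leaves, together with the compactness of the fibres, force holomorphic local triviality; this is where one leans on the results of \cite{lieberman}, \cite{fujiki}, \cite{amoros} characterizing split cotangent (equivalently tangent) bundles of compact Kähler manifolds as suspensions over tori, rather than reproving everything by hand. Once holomorphic triviality over a torus base is in place, (iii) is a routine finite-group bookkeeping.
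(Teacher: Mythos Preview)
Your treatment of the preliminary claim and of (i) and (iii) matches the paper's argument closely; the only omission in (i) is that you do not address connectedness of the fibres of $\Psi$ (the paper handles this via Stein factorization, replacing $T'$ by a further quotient of $\Alb(X)$ if necessary so that the leaves are exactly the fibres).

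The substantive difference is in (ii). The paper's argument is short and deformation-theoretic: the splitting $\Omega^1_X \cong \hol_X^k \oplus \sF^{\vee}$ means exactly that the exact sequence $0 \to \Theta_{X/T'} \to \Theta_X \to \Psi^*\Theta_{T'} \to 0$ splits holomorphically, so the Kodaira--Spencer map of the family $\Psi$ vanishes identically; Kuranishi's theorem (Fischer--Grauert) then gives holomorphic local triviality. Your route is instead to interpret the trivial summand in $\Theta_X$ as a holomorphic flat connection and pass to a suspension structure, eventually appealing to the Fujiki--Lieberman--Amor\'os machinery. This can be made to work, but two points need attention. First, there is a notational slip: the horizontal distribution is the trivial summand $\hol_X^k \subset \Theta_X$, not $\sF$; indeed $\sF$ is the annihilator of $W = \Psi^*\Omega^1_{T'}$ and hence equals the vertical bundle $\Theta_{X/T'}$. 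Second, and more seriously, your justification of integrability (``because $\sF$ is a holomorphic subbundle complementary to the relative tangent bundle with the required compatibility'') is not an argument. What is actually needed is the fact, from the cited sources, that on a compact K\"ahler manifold the nowhere-vanishing holomorphic vector fields span an abelian Lie subalgebra of $H^0(\Theta_X)$; this makes the trivial summand $\hol_X^k$ involutive and hence a genuine flat holomorphic Ehresmann connection. Once that is said, your suspension description follows, and (iii) is the same finite-monodromy argument as the paper's.

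In short: your (ii) reaches the same conclusion via a heavier (tangential-framing) route, and as written the key integrability step is asserted rather than proved; the paper's Kodaira--Spencer argument is more economical and avoids this issue entirely.
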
 
 
 \begin{proof}

  A subspace $W$ which is maximal with the property that all forms $\omega \in W \setminus \{0\}$
 are nowhere vanishing, has a basis  $\omega_1, \dots, \omega_k$
such that  the forms $\om_j$ are linearly independent   at each point,
hence they generate a trivial rank $k$ subbundle of $\Omega^1_X$.

The associated foliation is  analytically integrable, because  
  $X$ is a cKM, and holomorphic 1-forms are closed, hence the distribution induced by 
 $W$ is integrable, and spans a trivial subbundle of the cotangent bundle. 
 
 The foliation on $X$ is induced by a foliation on $Alb(X)$, corresponding to the annihilator of $W$.
 This foliation is algebraically integrable if, as we assume, it corresponds to the projection onto a quotient torus $T'$.
 
 The composed map $ \Psi : X \ra T'$ has fibres of dimension $k$, which are therefore union of leaves.
 Since the fibres are smooth, if they are not connected, we would get by the Stein factorization on unramified 
 covering of $T'$, which is again a quotient of $Alb(X)$ by the universal property of the Albanese variety.
 
 Hence we may assume that the fibres of $\Phi$ are connected, and we have a differentiable fibre bundle.
 
 The splitting $\Omega^1_X \cong \hol_X^k \oplus \sF^{\vee}$ guarantees that the Kodaira-Spencer map
 for the family is identically zero, hence by Kuranishi's theorem we have a holomorphic bundle.
 
 If this is a holomorphic bundle, with fibre $Y$, and  $Aut(Y)$ is finite, there is an unramified covering $T \ra T'$
 with group $G$ such that the pull back is a product. Therefore $ X = (T \times Y)/G$,
 where $G$ acts on $T$ via translations.
 
 \end{proof}

  \begin{theorem}\label{framed}
 Let $X$ be a compact K\"ahler  complex Manifold  of dimension $n$: then $X$ is      the suspension over a torus $T$
  with 
  $ dim (T) = k > 0$ if and only if there is a $k$-framing yielding a partial tangential splitting 
  $$\Theta_X \cong \hol_X^k \oplus \sF.$$

   A $k$-framing of $X$ yields the structure of a Seifert fibration on $X$ in the case where $h^0(\Theta_X) =k$.

  Moreover, a $k$-framed projective manifold $X$ is  a suspension over an Abelian Variety 
  and is indeed a Pseudo-Abelian variety if $K_X$ is nef.

 \end{theorem}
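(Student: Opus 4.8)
The plan is to prove the equivalence, and then to extract the two refinements from its proof. One implication needs nothing new: if $X=(\CC^k\times Y)/\Lam$ is a suspension over the torus $T=\CC^k/\Lam$, then, as recorded in the Remark following the definition of a suspension over a torus (item (c)), the product splitting $\Theta_{\CC^k\times Y}\cong\hol^k\oplus\mathrm{pr}_Y^*\Theta_Y$ is $\Lam$-equivariant and descends to $\Theta_X\cong\hol_X^k\oplus\sF$, the trivial factor being the pullback of $\Theta_T$. For the converse I would fix a splitting $\Theta_X\cong\hol_X^k\oplus\sF$, write $D:=\hol_X^k$, choose holomorphic vector fields $u_1,\dots,u_k\in H^0(\Theta_X)$ generating $D$ (so linearly independent at every point, in particular nowhere vanishing), and take the dual frame $\om_1,\dots,\om_k$ of the trivial summand $D^\vee$ of $\Omega^1_X$, normalised by $\om_i(u_j)=\delta_{ij}$ and annihilating $\sF$. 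Since $X$ is K\"ahler the $\om_i$ are closed; hence $\langle\om_1,\dots,\om_k\rangle$ is an integrable Pfaffian system with integral foliation $\sF$ and trivial normal bundle $D$, and Cartan's formula gives $L_{u_i}\om_l=d(\om_l(u_i))+\iota_{u_i}\,d\om_l=0$ together with $\om_l([u_i,u_j])=-\,d\om_l(u_i,u_j)=0$; so each $u_i$ preserves $\sF$ and $[u_i,u_j]\in H^0(\sF)\subseteq H^0(\Theta_X)$.

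Next I would integrate the closed forms on the universal cover to obtain a holomorphic submersion $p=(\int\om_1,\dots,\int\om_k)\colon\widetilde X\to\CC^k$, equivariant for the period homomorphism $\mu\colon\pi_1(X)\to\CC^k$ and, after lifting the $u_i$, for the translation action of $\CC^k$; the subbundle $D$ provides the corresponding Ehresmann connection on $p$. The crux is to show that this connection is flat — equivalently that the $u_i$ can be chosen to commute — and then that the periods form a genuine lattice: granting both, contractibility of the base trivialises $p$ equivariantly as $\widetilde X\cong\CC^k\times\widetilde Y$ ($\CC^k$ acting by translation on the first factor, $\widetilde Y=p^{-1}(0)$), and with $K:=\ker\mu$, $Y:=\widetilde Y/K$, $\Lam:=\mu(\pi_1(X))$ one gets $X=(\CC^k\times Y)/\Lam$, a suspension over the torus $T=\CC^k/\Lam$ of dimension $k$. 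This flatness-plus-lattice step is the one place where compactness and the K\"ahler condition are genuinely used; it is the content imported from \cite{amoros} and \cite{lieberman}, and I expect it to be the main obstacle.

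The Seifert statement is the case $h^0(\Theta_X)=k$, where this obstacle evaporates. Indeed $H^0(\Theta_X)=H^0(D)\oplus H^0(\sF)=\CC^k\oplus H^0(\sF)$ forces $H^0(\sF)=0$, so $[u_i,u_j]=0$ and $\mathfrak A:=H^0(\Theta_X)=\langle u_1,\dots,u_k\rangle$ is abelian and consists of nowhere vanishing vector fields. By the theorem of Fujiki and Lieberman (Theorem~\ref{f-l}) the Lie algebra of the linear part $L$ of $\Aut^0(X)$ is made of fields with zeros, hence is $0$, so $L=\{1\}$ and $\Aut^0(X)=T_X$ is a complex torus of dimension $k$. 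The commuting fields $u_i$ integrate to the $T_X$-action, which by their pointwise independence has finite stabilisers and all orbits of dimension $k$: this is exactly Roth's configuration, and the discussion at the beginning of Section 3 then yields a finite abelian \'etale cover $Z\to X$ which is a principal $T_X$-bundle over a base $Y$, with the deck group commuting with $T_X$, i.e. $X$ is Seifert fibred.

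For the last part let $X$ be projective. Running the equivalence, $X$ is a suspension over a torus $T$, and the suspension projection $X\to T$ is a surjective morphism from a projective variety, so $T$ is a Moishezon complex torus, hence an abelian variety; thus $X$ is a suspension over an abelian variety. Suppose moreover $K_X$ is nef; then $X$ is not uniruled, so Theorem~\ref{f-l} gives $\Aut^0(X)=T_X$ a torus and forbids nonzero vector fields with zeros, whence the evaluation $\mathfrak t_X=H^0(\Theta_X)\to T_xX$ is injective at every $x$ and $T_X$ acts with finite stabilisers, all orbits of one and the same dimension $q:=\dim T_X$. The orbit tangent bundle is then the image of a surjection $\hol_X^{q}\twoheadrightarrow N\subseteq\Theta_X$ between bundles of rank $q$, hence an isomorphism, so $N\cong\hol_X^{q}$ is a trivial subbundle; by maximality of $k$ this gives $q\le k$, while the $k$ independent generators of $D$ lie in $\mathfrak t_X$ and are independent there, so $q\ge k$ and $q=k$. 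Thus $\Aut^0(X)=T_X$ is a $k$-dimensional abelian variety acting on $X$ with finite stabilisers and $k$-dimensional orbits; by Roth's characterisation recalled in Section 3 the orbit fibration has finite monodromy $G\subset T_X$ and $X=(Y\times T_X)/G$ with $G$ acting on $T_X$ by translations, i.e. $X$ is a Pseudo-Abelian variety — and in particular $\Theta_X\cong\hol_X^k\oplus\sF$.
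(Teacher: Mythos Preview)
Your route and the paper's diverge at the converse of the main equivalence. You lift to the universal cover, integrate the $\om_i$ to a submersion $p:\widetilde X\to\CC^k$, and then need the Ehresmann connection $D$ to be flat and the periods of the $\om_i$ to form a lattice --- a step you rightly flag as the crux and defer to \cite{amoros}, \cite{lieberman}. The paper instead stays downstairs and routes everything through the Albanese map: it invokes (from \cite{fujiki}, \cite{lieberman}, and Theorem~1.2 of \cite{amoros}) that the $k$ pointwise-independent nowhere-vanishing fields generate the action of a compact torus $T$ on $X$; since $\Aut^0(X)$ acts on $\Alb(X)$ by translations, $T$ maps to a subtorus $A$, and one passes to the quotient $T':=\Alb(X)/A$, applying Proposition~\ref{nozero} with the cotangential splitting to get a holomorphic bundle $\Psi:X\to T'$ with parallel transport by $T$. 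This absorbs your two obstacles simultaneously: commutativity of the $u_i$ holds because they sit in the Lie algebra of an abelian group, and the lattice issue disappears because the image of a compact torus in $\Alb(X)$ is automatically a closed subtorus. Your Seifert argument under $h^0(\Theta_X)=k$ is essentially the paper's (both reduce to $\sH^1_X=0$ and invoke the torus action; the paper cites Lieberman's Theorem~4.9 for the fibration structure rather than the opening of Section~3, which is strictly the projective case).

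There is one genuine slip in your last paragraph: for the projective assertion you write ``Running the equivalence, $X$ is a suspension over a torus'', but the equivalence of the first part requires a \emph{splitting} $\Theta_X\cong\hol_X^k\oplus\sF$, while a $k$-framing only gives a trivial \emph{subbundle}. You cannot invoke the equivalence there; the paper defers this assertion to Theorem~0.3 of \cite{amoros} (see also \cite{lieberman1}). Your subsequent direct argument under $K_X$ nef --- non-uniruledness, $\Aut^0(X)=T_X$ a torus by Theorem~\ref{f-l}, the orbit-dimension count giving $\dim T_X=k$, and Roth's description from Section~3 --- is correct and in fact more explicit than the paper, which again just cites \cite{amoros}.
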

 
 \begin{proof}
  We have already seen that for a suspension over a torus we have such a partial tangential splitting.
 
 Conversely, recall that $H^0(\Theta_X)$ is the Lie algebra of the Lie group $Aut(X)$.
 
The Lie Algebra $H^0(\Theta_X) = : \mathfrak A_X$ contains the Lie ideal $\sH^1_X$ of the vector fields
admitting zeros, and there is (see \cite{amoros}, page 1002), a direct sum 
$$ \mathfrak A_X = \sH^1_X \oplus \sA,$$
where $\sA$ is a maximal Abelian subalgebra  consisting of   nowhere vanishing vector fields.

By Theorem 3.14 of \cite{lieberman}, $\sH^1_X$ is precisely the subspace of $\sH_X$ yielding the zero flow on $Alb(X)$.

  If $\sH^1_X =0$ then the trivial subbundle yields $k$ everywhere linearly independent vector fields, which, see \cite{fujiki}, \cite{lieberman},
 and also Theorem 1.2 of \cite{amoros}, 
 generate the action of a $k$-dimensional complex torus $T$ with smooth orbits $T_x$, quotients of $T$
 by a finite group $H_x$.
 
 Hence $X$ is Seifert fibred, as we have discussed earlier, cf. Theorem 4.9  by Lieberman \cite{lieberman}.
 
 If  we have  a tangential splitting $\Theta_X \cong \hol_X^k \oplus \sF$, we get a corresponding
 cotangent splitting, 
 $$\Omega^1_X \cong \hol_X^k \oplus \sF^{\vee}.$$
 It suffices, by the preceding proposition, to show that the coframing defines a subspace $W \subset H^0(\Omega^1_X)$
 corresponding to a quotient torus $T'$ of  $Alb(X)$.
 
 Since every automorphism of $X$ yields an affine action on $Alb(X)$, the action of the torus $T$, which spans
 $W^{\vee}$, yields a subtorus $A$ of $Alb(X)$. Then we define $ T' : = Alb(X) / A$, and we get $\Psi: X \ra T'$
 which   is a holomorphic bundle, with parallel transport given by the
 action of $T$: hence $X$ is the suspension over a torus.

Finally, if $X$ is projective, $T'$ is an Abelian variety. We defer the reader to Theorem 0.3 of \cite{amoros}
for the proof of the last assertions, see also \cite{lieberman1}.

 \end{proof}
 
 \begin{remark}
Theorem 0.3 of \cite{amoros} proves the following very  interesting result:  if $X$ is a compact K\"ahler manifold which
is $k$-framed, then $X$ admits a small deformation which is a suspension over a $k$-dimensional torus,
and which is a k-Pseudo-Torus if $Kod (X) \geq 0$.

If $X$ is projective and $k$-framed, as we saw, they show that $X$ is Pseudo-Abelian.

 \end{remark}
 
 \begin{theorem}\label{coframed}
 
  Assume that $X$ is a $k$-coframed compact K\"ahler manifold, such  that either
 
 (a) $q: = h^0(\Omega^1_X)=k$,
 
  or  more generally 
  
  (b) the framing is given by $f^* ( \Omega^1_A)$, where $f : X \ra A$ is holomorphic
  and $A$ is a complex torus of dimension $k$ ($f$ is the Albanese map $a_X$ in the first case).
  
  (i)  Then $f$    is a differentiable fibre bundle.
 
(ii)  If   $X$ is projective,  and $K_X$ is nef, then $X$ is a pseudo-Abelian variety provided one of the following assumptions is satisfied:

\begin{enumerate}
\item
$n-k \leq 3$,
\item
$K_X$ is pseudoample (there is a  positive integer $m$ such that  $|m K_X|$ is base point free),
\item
the fibres of $f$ have a pseudoample canonical system,
\item
$k \leq 5$.
\end{enumerate}

 \end{theorem}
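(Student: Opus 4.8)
The plan is to deduce (i) from Proposition \ref{nozero}(i) and to reduce (ii) to a birational isotriviality statement for the smooth fibration produced in (i), which is then upgraded to the Pseudo-Abelian structure.

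For (i), note that in case (a) the assumption $q=k$ forces the space $W:=H^0(\Omega^1_X)$ to equal the space of sections of the maximal trivial subbundle of $\Omega^1_X$, so the forms $\om_1,\dots,\om_k$ are pointwise linearly independent and $a_X$ is a submersion; in case (b) the hypothesis that $f^*\Omega^1_A\hookrightarrow\Omega^1_X$ is a subbundle inclusion says exactly that $df$ has rank $k$ everywhere, so $f$ is a submersion. Replacing $A$ by the abelian subvariety generated by $f(X)$ we may assume $f$ is surjective, and since $f$ factors through $\Alb(X)$, a Stein factorization $X\to Z\to A$ has $Z\to A$ finite and unramified with $Z$ again a quotient torus of $\Alb(X)$; after this replacement $f$ has connected fibres. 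A proper holomorphic submersion with connected fibres is a differentiable fibre bundle by Ehresmann's theorem, which is exactly the conclusion of Proposition \ref{nozero}(i); the typical fibre $F$ is a compact Kähler (projective, once $X$ is) manifold of dimension $n-k$.

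For (ii) one works with $f\colon X\to A$, $A$ an abelian variety of dimension $k$, $F$ a fibre. Since $K_A=\hol_A$, adjunction gives $K_X|_F=K_F$, so $K_F$ is nef and $F$ is a minimal model. Each of (1)–(3) makes $F$ a \emph{good} minimal model: if $n-k\le 3$ abundance is known, so $K_F$ is semiample; if $|mK_X|$ is base point free then so is $|mK_X|_F|=|mK_F|$; and (3) is this last statement assumed directly. In case (4) the fibre dimension is unbounded and one must instead exploit the smallness of the abelian base: for $k=\dim A\le 5$ the instances of abundance and of the Viehweg–Kawamata positivity of the direct images $f_*\omega_{X/A}^{\otimes m}$ on $A$ that are needed to force the variation of $f$ to vanish lie within the proven range. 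I expect this case to be the main obstacle. Granting that the fibres of $f$ have good minimal models, Taji's theorem \cite{taji} applies to the smooth projective morphism $f$ and gives birational isotriviality: there is an isogeny $\tau\colon T\to A$ of abelian varieties such that $X_T:=X\times_A T$ is birational over $T$ to the product $F\times T$.

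Finally one promotes this to the Pseudo-Abelian structure. The family $X_T\to T$ is again smooth and proper with $K_{X_T/T}=K_{X_T}$ nef, and is birational over $T$ to $F\times T$; running the (trivial) relative MMP over $T$ together with uniqueness of the good minimal model inside a smooth family shows that each fibre of $X_T\to T$ is isomorphic to $F$ and that $X_T\cong F\times T$ over $T$. Setting $G:=\ker\tau\subset T$, a finite group acting on $T$ by translations, the descent of the product structure gives $X=X_T/G=(F\times T)/G$; since $K_F$ is nef, $F$ is not uniruled, so by Theorem \ref{f-l} $\Aut^0(F)$ is a torus, and the maximality of $k$ in the coframing rules out a positive‑dimensional part in the monodromy, so $G$ acts through a finite subgroup of $\Aut(F)$. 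Hence $X$ is a Pseudo-Abelian variety, as in Theorem \ref{framed}. Besides case (4), the points requiring care are the passage from a birational equivalence over $T$ to an honest product (uniqueness of good minimal models in smooth families) and the finiteness of the monodromy, which is where the maximality of the coframing re-enters the argument.
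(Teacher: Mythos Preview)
Your treatment of (i) and the reduction of cases (1)--(3) to Taji's birational isotriviality theorem match the paper's argument. The real divergence, and the real gap, is in the upgrade from birational isotriviality to an honest holomorphic bundle structure.

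You invoke ``uniqueness of the good minimal model inside a smooth family'' to pass from $X_T$ birational to $F\times T$ to $X_T\cong F\times T$. But good minimal models are only unique up to flops, not up to isomorphism: already in dimension $3$ there are smooth projective varieties with trivial (hence semiample) canonical bundle that are birational but not isomorphic (Atiyah flops between small resolutions of a nodal Calabi--Yau). So neither the claim that every fibre of $X_T\to T$ is isomorphic to $F$, nor the stronger claim $X_T\cong F\times T$ over $T$, follows from minimal model theory alone. A smaller point: Taji's theorem does not hand you an isogeny $T\to A$; the base change is a general $\phi:W\to A$, and what you actually obtain is that the fibres over some Zariski open $U\subset A$ are isomorphic to $F$.

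The paper closes this gap by a different mechanism, working directly over $A$. It invokes Wavrik's criterion: if $h^0(\Theta_{F_a})$ is constant in $a$, then each Kuranishi space $\Def(F_a)$ is a local moduli space, so the induced holomorphic map from a neighbourhood of $a$ into $\Def(F_a)$, being constant on the dense set $U$, is identically constant, and $f$ is a holomorphic bundle. The constancy of $h^0(\Theta_{F_a})$ is itself a nontrivial step: since $K_{F_a}$ is nef the fibres are not uniruled, so by Theorem~\ref{f-l} every holomorphic vector field on $F_a$ is nowhere vanishing and $H^0(\Theta_{F_a})$ is the Lie algebra of the torus $\Aut^0(F_a)$; thus each $F_a$ is itself pseudo-Abelian, and one argues that small deformations of a pseudo-Abelian variety of a given order remain pseudo-Abelian of the same order, yielding lower semicontinuity of $h^0(\Theta_{F_a})$ to complement the usual upper semicontinuity. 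Once $f:X\to A$ is a holomorphic bundle, projectivity of $X$ forces finite monodromy and the pseudo-Abelian structure follows directly, without your descent argument and its attendant worry about whether the $G$-action is diagonal.

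For case (4) the paper does not argue via abundance on the fibres at all; it cites a separate result of Taji \cite{taji21} that gives birational isotriviality directly from the hypothesis $\dim A\le 5$, after which the Wavrik argument runs as before. Your sketch for (4) gestures at the right circle of ideas but is not a proof.
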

 
 \begin{proof}
 The first assertion follows from i) of Proposition \ref{nozero}.
 
We prove now as a warm up   the  second assertion (ii) in the case where the fibres of $f$ have dimension $1$, 
hence they are  smooth curves of genus $g \geq 1$.
Since the fibration induces a holomorphic map  of the universal covering $\CC^k$ of $A$ to the Teichm\"uller space $\sT_g$,
 which is biholomorphic
to a bounded domain, this map is constant and we have a holomorphic fibre bundle.

Since $X$ is projective, the monodromy is finite, so there exists a finite unramified map $A' \ra A:= Alb(X)$ such that
the pull back is a product, hence $X$ is a Pseudo-Abelian variety.

Since $K_X$ restricts to the canonical bundle on the fibres, the fibres have nef canonical divisor,
and clearly (2) implies (3).
 
Also  (1) implies (3): if   the fibres  have dimension $\leq 3$,  then the abundance conjecture is true
by  results of Miyaoka and Kawamata, as explained  in \cite{flips}, that is, (3) holds.

We are reduced to 
proving the result under assumption (3), which implies that the fibres have a good minimal model.

We can apply then  Theorems 1.1, 1.2 and 1.3 of \cite{taji} implying that we have birational isotriviality, namely,  there exists  a  base change $\phi : W \ra A$ such that the pull-back 
$$\phi^* (X) : =  W \times_A X$$ of $f : X \ra A$
is birational to a product. Given a general smooth fibre $F$ of $f$, we can assume that $\phi^* (X)$ is birational
to $ W \times F$, and that this birational map induces  the identity on $F$. 

Hence there is a Zariski open set $U$ of $A$ such that the fibres over $U$ are isomorphic to $F$.

We apply now the  results of Wavrik\cite{wavrik} in order to conclude that, for each fibre $F_a$, the Kuranishi space $ Def(F_a)$ is a local moduli space,  and from this we shall conclude  that all the smooth fibres are isomorphic to $F$ and 
that we have a holomorphic bundle. 

The hypothesis needed is that the dimension of the space of holomorphic vector fields
$ h^0 (F_a, \Theta_{F_a})$ is constant.  Now, for $a$ in the Zariski open set $U$,  this dimension equals $k$, 
hence, since   $ h^0 (F_a, \Theta_{F_a})$ is   upper semicontinuous by standard results, 
 for each point $a$
this dimension can only be $k_a \geq k$.
Since the fibres $F_a$  have nef canonical divisor, they do not admit vanishing holomorphic vector fields,
see  Theorem  \ref{f-l},  hence their space of vector fields is the Lie algebra of a complex torus $Aut^0(F_a)$,
hence the fibres, being projective with nef canonical divisor, are pseudo-Abelian.
 Since on the fibre $F_a$ we have (see Theorem \ref{framed}) a bundle splitting 
$$\Omega^1_{F_a} = \sF' \oplus \hol_{F_a}^{k_a}$$
and $h^0(\Omega^1_{F_t})$ is constant for the fibres $F_t, \ t\in A$,  we see that this splitting extends to the nearby fibres $F_t$,
$ t \in U$:
if $k_a > k$, 
 the maximality of $k$ for $F_t$ would be contradicted.

Hence the  dimension $ h^0 (F_a, \Theta_{F_t})$ is constant for $t \in A$ and  we can apply Wavrik's result.

We have now a holomorphic map of a neighbourhood $\sU$ of $a \in A$ into $ Def(F_a)$, and this map is constant on a dense subset:
hence this holomorphic map is constant, and Wavrik's Theorem  shows that $f$ is a holomorphic bundle.

Since $X$ is projective, we conclude as above that  $X$ is a pseudo Abelian variety.

Under  assumption (4) we can instead apply  Theorem 1.1 of  \cite{taji21} to obtain birational isotriviality, and proceed similarly.

 \end{proof}

\begin{remark}

(I) Isotriviality of such a fibration  
was first  shown by Kovacs in the case where the fibres have ample canonical divisor
 \cite{kovacs}, and this result was extended in \cite{wei-wu} who proved birational isotriviality in  the more general case of fibres of general type.

\end{remark}

 \begin{example}\label{algcoframed}
 
 This example concerns $k$-coframed varieties.
 
 (1) A smooth fibration onto an Abelian variety need not be isotrivial, and not even a holomorphic bundle.
 
 Let in fact $A$ be an elliptic curve and assume that we have an embedding $ j : A \ra S$, where $S$ is a surface of general type,
 say with $K_S$ ample.
 
 Let $\Ga \subset A \times S$ be the graph of $j$, and take $ X $ to be the blow up of $A \times S$ with centre the smooth curve 
 $\Ga$.
 
 Then the differentiable fibre bundle $ f : X \ra A$ is not a holomorphic bundle, since $Aut(S)$ is finite, and
 the pairs $(S,P_1)$ and $(S, P_2)$ are not isomorphic for general $P_1, P_2 \in j(A)$.
 
 (2) In this case $K_X$ is not nef.  
 \end{example}
 
 We pose now the following general questions, (b) being a generalization of Conjecture \ref{co-framed}:
 
  \begin{question}
 Assume that $X$ is a $k$-coframed projective manifold. Does there exist a coframing $V$ (a subbundle 
  $V \cong \hol_X^k$  of $\Omega^1_X$) such that   i) of Proposition \ref{nozero} holds,
  namely the subspace $W =  H^0 (V) \subset H^0(\Omega^1_X) $ defines a quotient Abelian variety $A'$?
  
  (b) If $X$ is projective with $K_X$ nef and  it admits a fibration $f : X \ra A'$ onto an Abelian variety $A'$ with all fibres smooth, is then $f$  a holomorphic bundle (and hence $X$
  is a pseudo-Abelian variety)?

\end{question}

\bigskip

\begin{remark}\label{geometrical}

Question (a) asks, in the case where $X$ is projective, whether  there is a choice of the  subspace $W \subset H^0 (\Omega^1_X)$
 corresponding to an Abelian subvariety of $Alb(X)$, equivalently, whether  
  $\Lam^{2k} (W \oplus \bar{W})$ is a point defined over $\QQ$ in the Grassmann manifold 
  $Grass (2k, H^1(X,\CC)) \subset \PP( \Lam^{2k}(H^1(X,\CC))$.
  
 One may  reduce to the case where  $X$ is defined over an algebraic extension of $\QQ$.
 
\end{remark}

 \section{Mathematical and Historical comments on Baldassarri's paper} 
 
 \subsection{Historical comments}
Baldassarri  \cite{baldassarri} wanted 
  to characterize the smooth projective  manifolds $X$ whose first  
 $h$ canonical systems  $$K_0(X) , \dots, K_{h-1}(X)$$ have degree zero.
 
 At that time, even if Baldassarri in his Ergebnisse book \cite{ergebnisse} (the book was rather influential,
 it was for instance  translated in Russian by Manin)
 was exposing the new methods in the theory of Algebraic Varieties, the concept of canonical systems of all dimension
 was  based on  geometric approaches.
 
 The canonical systems of a manifold (see  \cite{rothgenova}),  defined in a geometric way by  Todd, Eger and later in a simpler way by 
Beniamino Segre \cite{segre}, \cite{segre2} \footnote{ using the embedding covariants for the
 case of the diagonal $\De_X \subset X \times X$: this   approach was also  later 
 followed by Grothendieck.} after proposals made by 
 Severi, were shown in 1955 by  Nakano \cite{nakano} to be the so called  Chern classes of the cotangent bundle
 (see \cite{atiyah} for an historical  account and \cite{fulton} as a general reference):
more precisely, up to sign,  to the system  $K_h(X)$ corresponds the  Chern class
$ c_{n-h}(X)$, where  $n$ is the dimension of  $X$ (and we can consider the Chern classes either as elements
of the Chow ring of $X$, or as integral cohomology classes).

For this  reason   we  rephrase the  questions raised  by  Baldassarri in terms of Chern classes: Baldassarri dealt with the question of characterizing all the
 projective varieties $X$ such that the  rational Chern classes $c_i(X) \in H^*(X, \QQ)$ vanish for $ n-k+1 \leq i \leq n$,
 but not for $i=n-k$.
 
 This question is still wide open, except for the case $k=n$ (cf. Theorem \ref{zerochern}).
 
  As we have shown, already in the surface case  even  the condition that 
 all the Chern numbers are zero  (this means
$K^2_X = e(X)=0$)  does not  imply that $X$ is
a complex torus or a hyperelliptic surface (in the old notation a torus was also called a hyperelliptic surface,
 of grade, or rank, 1). It  implies (see Theorem \ref{dantoni})  that the surface is either a torus or it is 
 birationally (but not necessarily biregularly) covered by a 1-dimensional family of isomorphic elliptic curves.

  Baldassarri's question also suggests (see question (IV) below)
   to classify, if possible,  the  
 varieties  (respectively, the cKM) $X$ whose  integral Chern classes $c_i(X) \in H^*(X, \ZZ)$ vanish for $ n- k + 1 \leq i \leq n$.
 
 Again, this question is still open, even  for the case $k=n$, as we saw in  Part I
 \cite{part1}.

 The paper \cite{baldassarri} by Baldassarri, suggesting that the Pseudo Abelian Varieties of Roth might be the varieties with such vanishing top real Chern classes,  motivated  other   more general questions.
 But, in view of what we have seen already  in the case of surfaces, one has  to  include 
 first of all a condition 
 of minimality of $X$ in a strong sense, for instance  that $K_X$ is nef.
 
  Or require only a birational 
 isomorphism with a product (because a $\PP^1$-bundle over an elliptic curve $T$ is only birational
 to $\PP^1 \times  T$), or that $X$ be only birational  to
 $ (Y \times T)/G$, 
where  the action is only free  in codimension 2  (as in  \cite{haoschreiederbmy},  see especially Cor.1.2 ).

 Todd's review of \cite{baldassarri}  pointed out  a wrong intermediate result in the paper, admitting as counterexample the blow up $X$ of 
$\PP^3$ with centre a smooth curve of genus  $3$, which is a regular manifold $X$ having    $c_3=0$.

Todd's counterexample could somehow be quickly dismissed as  only pointing out the need to assume that $X$ is a minimal manifold, say with $K_X$ nef, as 
we already mentioned.

We observed  however here in this paper that the flaw  is not simply a technical problem, 
 the main claim by Baldassarri that the solutions are Roth's Pseudo-Abelian varieties  
 is incorrect also for minimal manifolds.  Because for instance the class of  Manifolds isogenous to a k-torus product,
 which is a  class of solutions to Baldassarri's problem, is 
 (see section 2) a larger class than the class of
 Roth's Pseudo-Abelian varieties.

 Todd saw however  that the crucial flaw in Baldassarri's argument was the attempt to show (in dimension at least 3) that  manifolds
 with top Chern class equal to zero possess non zero holomorphic 1-forms.
 
What is historically interesting for us is to observe  that the `original sin' of  \cite{baldassarri} was to try
 to extend to higher dimension some wrong results by Enriques,
Dantoni and Roth (indeed the error of Enriques is also reproduced in the classification theorem of Castelnuovo and Enriques
\cite{classificazione}).

 Baldassarri and  Roth were  inspired by a paper by  Dantoni \cite{dantoni}, devoted to the minimal surfaces $X$ with 
$c_2(X)=0$. Dantoni uses surface classification,  especially an article by  Enriques of 1905 \cite{enriques-pg0}, 
claiming that the non ruled surfaces with these properties are
the hyperelliptic manifolds and the `elliptic' surfaces. But `elliptic' for Enriques here does  not have  the same standard meaning 
introduced later by  Kodaira and others: Enriques requires the action of a fixed elliptic curve on $X$,
with all orbits of dimension 1. 
 
 Hence Enriques and  Dantoni in their classification omit to consider  the case of quotients $X= (E \times C)/G$ where the action of the finite group   $G$ is free, 
 of product type, but 
 $G$  does not act on the elliptic curve  $E$  via translations, and moreover  $C$ is a curve of genus  $ g \geq 2$, 
 such that the 
  quotient $C/G$ is an elliptic curve (see section 2, and for instance  \cite{fcbl} or \cite{bea}   for the special case $p_g=0$,
  and  \cite{cb} or \cite{time} for the general case).
  Indeed, in this latter case the automorphism group of 
 $X$ has dimension zero.
 
 Dantoni's paper inspired  Roth \cite{rothPAV} \cite{rothPAV2} who defined the Pseudo-Abelian varieties as the manifolds
 admitting the action of a complex torus of positive dimension   $=k$ having all orbits of dimension  $k$,
 and such that $k$ is maximal with this property.

But for instance, in  \cite{rothhyp}, Roth does not realize about the existence of Hyperelliptic threefolds with 
automorphism groups of dimension zero, and believes that Hyperelliptic threefolds can  only be
 Pseudo-Abelian varieties with 
 $k\geq 1$.
 
 The first  conclusion is simple: the `original sin' was to consider only quotients $X = (T \times Y)/G$ where  $T$ is a torus, 
  $G$ acts freely via a product action, and  $G$ acts on  $T$ via translations. 
 Obviously under these assumptions  $T$ acts on $X$ and the orbits have all the same dimension $ k = dim (T)$!
 
\subsection{Mathematical comments}
Baldassarri' s paper suggested  the  following other questions:

 \begin{question}
 
 (I)  What can be said about a  projective Manifold (respectively, a compact K\"ahler manifold) $X$ with $K_X$ nef and such that  all its  Chern numbers  are  equal to zero? 

 (II)  Is a  projective Manifold  (respectively, a compact K\"ahler manifold) $X$ with $K_X$ nef and such that its  rational Chern classes $c_i(X) \in H^*(X, \RR)$ vanish for $ k + 1 \leq i \leq n$ isogenous to a $k$-Torus Product (respectively, isogenous to a $k$-framed or $k$-coframed manifold)?
 
(III) Same question for a  projective Manifold  (or compact K\"ahler manifold) $X$ with $K_X$ nef and with  all the Chern numbers   equal to zero. 

(IV) What can we say about a projective Manifold  (or cKM) $X$ whose integral Chern classes $c_i(X) \in H^*(X, \ZZ)$ vanish for $ k + 1 \leq i \leq n$?

\end{question}

We shall see in the next section that work of Chad Schoen \cite{schoen} gives a negative answer to Questions (II)
and (III).

 \begin{remark}
a)  We have mentioned  that Baldassarri's  claim is wrong already for surfaces. 
 
b) The claim is also wrong in  dim = 3 since here, as shown in  Prop. 1.5 of Part I, \cite{part1},   there are Hyperelliptic Threefolds whose group of Automorphisms is discrete, hence they are not Pseudo-Abelian. 
 Indeed, if one looks at the paper by Roth \cite{rothhyp} on Hyperelliptic threefolds, one sees that Roth does not consider the
 case of Hyperelliptic Threefolds for which $Aut^0(X) $ consists only of the Identity.
 
 c) In the same paper Roth calls, following Enriques, \cite{enriques-cont},  \cite{enriques-pg0},  `the elliptic surfaces' the surfaces such that $Aut^0(X) $ is an elliptic curve.
 
Here the modern terminology, introduced by Kodaira, differs:
 an elliptic surface  is a surface admitting a fibration $f : S \ra C$ with fibres elliptic curves; in general it  does not possess 
 non-trivial automorphisms.
 \end{remark} 
  
 As we saw in Part I \cite{part1}, however, there are Hyperelliptic Threefolds and Varieties (hence for them  $c_n(X)=0$)
 which  are regular ($H^1(\hol_X)=0$).
 
 Hence the crucial fact that  Baldassarri wants to use, that for $c_n(X)=0$
 we have  an  irregular variety possessing a holomorphic 1-form without zeros has to be taken as an assumption.
 
 To give an idea of the difficulty of the type of questions considered by Baldassarri, let us notice that
 for instance the investigation of varieties such that there is a holomorphic 1-form $\omega \in H^0(\Omega^1_X)$ without zeros
 has been taken up (before our present investigations) in the last years by Schreieder and Hao (\cite{schreieder1} \cite{haoschreieder1}),
 and a classification has been achieved only in dimension 3.
 
 What is more interesting is that, under this much stronger assumption of the existence of such a form $\omega$, the
 results confirms, in a special case, the  result claimed by Baldassarri (see also our Theorem \ref{coframed}).
 
 \begin{theorem}\label{h-s} {\bf (Hao-Schreieder \cite{haoschreieder1})}
 
 Let $X$ be a smooth projective threefold.
 
  Then the   following properties (A), (B), (D) are equivalent.
 
 (A): $X$ admits a holomorphic 1-form $\omega \in H^0(\Omega^1_X)$ without zeros.
 
 (B) : there exists on $X$ a real closed 1-form  without zeros. 
 
 (D):  The minimal model program for $X$ yields a birational morphism $\sigma : X \ra X^{min}$ blowing up 
 smooth elliptic curves which are not contracted by the Albanese map, and 
 such that 
 
 (2) there is a smooth morphism $\pi :  X^{min} \ra A$ to an Abelian variety of positive dimension;
 
 (3) If the Kodaira dimension is non negative, then a finite \'etale cover of $X^{min}$ is a product 
 $ X' \cong A' \times S'$, where $S'$ is smooth projective, and the composite map $A' \cong A'\times \{s'\} \ra A$
 is finite and \'etale.
 
 (4) If the Kodaira dimension is equal to $ - \infty$, then either
 
 (4a) $X^{min}$ has a smooth Del Pezzo fibration over an elliptic curve, or
 
 (4b)  $X^{min}$ has a conic bundle fibration $f :  X^{min} \ra S$ over a smooth surface $S$ satisfying property (A). Moreover,
 either $f$ is smooth, or $A$ is smooth and the degeneracy locus of $f$ is a finite union of elliptic curves which are \'etale over $A$.
 \end{theorem}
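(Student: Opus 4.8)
The plan is to run the three-dimensional minimal model program on $X$, keeping track at each step of the nowhere-vanishing $1$-form $\omega$, and then to analyse the output according to its Kodaira dimension. The elementary device used throughout is the conormal sequence
$$ 0 \ra N^{\vee}_{Z/X} \ra \Omega^1_X|_Z \ra \Omega^1_Z \ra 0 $$
attached to a subvariety $Z \subset X$: since $\omega$ has no zeros, $\omega|_Z$ is a nowhere-vanishing section of $\Omega^1_X|_Z$, and knowing whether its image in $\Omega^1_Z$ vanishes, together with the conormal part, severely restricts the geometry of $Z$. First I would treat the MMP itself. For a flipping curve $C \cong \PP^1$ the image of $\omega|_C$ in $\Omega^1_C$ vanishes, so $\omega|_C$ is a nowhere-vanishing section of $N^{\vee}_{C/X}$, which a computation of the admissible splitting types of $N_{C/X}$ in terms of $K_X \cdot C$ excludes; and if a divisorial contraction sent a divisor $E$ to a point, then $E$ would be a (possibly singular) del Pezzo surface with $H^0(\Omega^1_E)=0$, so $\omega|_E$ would be a nowhere-vanishing section of the \emph{line} bundle $N^{\vee}_{E/X}$, forcing $N_{E/X}\cong\hol_E$, which is impossible since $\hol_X(E)|_E$ is negative on the curves contracted by $\sigma$. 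Hence every step is the blow-down of a $\PP^1$-bundle (in general, a conic bundle) $E\to C$ onto a smooth curve $C$; using $H^0(\Omega^1_E)=H^0(\Omega^1_C)$ together with the nowhere-vanishing of $\omega|_E$ one rules out $C$ rational and $C$ of genus $\geq 2$, so $g(C)=1$, and the same analysis shows $C$ is not contracted by the Albanese map. Finally $\omega$ descends to a $1$-form $\omega^{\min}$ on $X^{\min}$ which is again zero-free (it is so where $\sigma$ is an isomorphism, and its restriction to each blown-up elliptic curve is non-zero), so $X^{\min}$ again satisfies (A) and $\sigma$ has the asserted form.

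Next I would split by Kodaira dimension. By the theorem of Popa and Schnell, a projective manifold with a zero-free $1$-form is never of general type, so if $\kappa(X^{\min})\geq 0$ then $\kappa(X^{\min})\in\{0,1,2\}$. Abundance in dimension three provides the Iitaka fibration, and combining it with generic vanishing theory applied to $\omega^{\min}$ yields a morphism $\pi : X^{\min}\ra A$ to a positive-dimensional abelian variety. I would then show that $\pi$ is a submersion: a critical point of $\pi$ would, through the conormal sequence for the relevant fibre component and the nef-ness of $K_{X^{\min}}$, produce a zero of $\omega^{\min}$. The fibres of $\pi$ are then minimal surfaces (or lower-dimensional) of Kodaira dimension $\leq 1$ varying in a smooth family over $A$; such a family is isotrivial, and after a finite \'etale base change $A'\ra A$ it becomes a product, giving $X'\cong A'\times S'$ with $A'\ra A$ finite \'etale, which is statement (3).

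If instead $\kappa(X^{\min})=-\infty$, then $X^{\min}$ carries a Mori fibre space $g : X^{\min}\ra B$. The case $\dim B=0$ is impossible: $X^{\min}$ would be Fano, hence $b_1(X^{\min})=0$, contradicting (A). If $\dim B=1$, the general fibre is a del Pezzo surface, which is regular, so $\omega^{\min}$ descends to a zero-free $1$-form on $B$, forcing $B$ to be an elliptic curve, and the zero-free form then forces $g$ to have no singular fibres, which is case (4a). If $\dim B=2$, then $g$ is a conic bundle; $\omega^{\min}$ restricts to $0$ on the conic fibres, so it is pulled back from a $1$-form $\omega_S$ on $S:=B$ which is zero-free (its value at a point must be non-zero, since $\omega^{\min}$ is non-zero along the fibre over that point), whence $S$ satisfies (A) and is one of the surfaces classified by Theorem \ref{dantoni}; restricting $\omega^{\min}$ and $\omega_S$ to the discriminant and to its preimage then forces the degeneracy locus of $g$ to be a disjoint union of smooth curves, each an elliptic curve \'etale over $\Alb(X^{\min})$, which is case (4b). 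The points I expect to be hardest are proving that the fibration $\pi$ in the second paragraph is genuinely smooth and then isotrivial --- this needs the full strength of three-dimensional minimal model theory together with a careful normal-bundle analysis of the fibres of $\pi$ --- and the discriminant analysis in the conic bundle case, where ruling out the non-generic degenerations and establishing \'etaleness over the Albanese variety requires combining the $1$-form restriction arguments with the two-dimensional classification of surfaces satisfying (A).
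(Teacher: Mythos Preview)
The paper does not prove this theorem at all: it is stated as a result of Hao--Schreieder \cite{haoschreieder1} and is used only as a black box to derive the subsequent Corollary. There is therefore no proof in the paper to compare your proposal against.

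That said, your outline is broadly the strategy of the original Hao--Schreieder paper: run the MMP while tracking the nowhere-vanishing $1$-form via conormal sequences to exclude flips and contractions to points, identify the exceptional divisors as $\PP^1$-bundles over elliptic curves, and then split according to Kodaira dimension, invoking Popa--Schnell to rule out general type and analysing the Mori fibre space in the $\kappa=-\infty$ case. Your sketch is honest about where the real work lies. A few of your shortcuts would need more justification if this were to become a full proof: the exclusion of flipping curves requires a finer analysis than just the splitting type of $N_{C/X}$ (one must handle the terminal singularities that can appear along the MMP); the passage from ``smooth over an abelian variety with fibres of low Kodaira dimension'' to ``isotrivial, hence a product after \'etale base change'' is not automatic and in the original paper uses specific structural results for each fibre dimension; and the claim that in case (4b) the $1$-form is pulled back from $S$ needs an argument, since a priori $\omega^{\min}$ could have a component along the fibres of the conic bundle even though it vanishes on each individual fibre. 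These are exactly the points you flag as hardest, so your self-assessment is accurate.
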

 
 \begin{corollary}
 If $X$ is  a smooth projective threefold satisfying  property (A) of  admitting a holomorphic 1-form $\omega \in H^0(\Omega^1_X)$ without zeros, and the Kodaira dimension of $X$ is non negative, then $X^{min}$ is a Pseudo-Abelian variety.

 \end{corollary}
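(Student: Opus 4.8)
The plan is to extract everything from Hao--Schreieder's Theorem~\ref{h-s} and then feed the output into Theorem~\ref{coframed}. First, since the Kodaira dimension is a birational invariant, $\kappa(X^{min})=\kappa(X)\geq 0$, so $X^{min}$ is an honest minimal model: $K_{X^{min}}$ is nef and $X^{min}$ is not uniruled. By part~(2) of Theorem~\ref{h-s} there is a smooth surjective morphism $\pi\colon X^{min}\to A$ onto an abelian variety $A$ with $k:=\dim A\geq 1$; one may alternatively read this off from part~(3), which even exhibits a finite étale cover $X'\cong A'\times S'$ of $X^{min}$ with $A'\to A$ an isogeny. In particular $\pi$ is a differentiable fibre bundle, which already gives part~(i)-type information, but the real point is the holomorphic structure it induces on the cotangent bundle.

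Next I would observe that $\pi$ places $X^{min}$ exactly in the coframed situation of Theorem~\ref{coframed}(b). Indeed, smoothness of $\pi$ means the conormal sequence $0\to\pi^*\Omega^1_A\to\Omega^1_{X^{min}}\to\Omega^1_{X^{min}/A}\to 0$ is exact on the left with locally free quotient, so $\pi^*\Omega^1_A$ is a subbundle of $\Omega^1_{X^{min}}$; since $\Omega^1_A$ is trivial, $\pi^*\Omega^1_A\cong\hol_{X^{min}}^{k}$ is a trivial subbundle of positive rank. Thus $X^{min}$ is coframed by a framing of the form $f^*\Omega^1_A$ with $f=\pi$ and $A$ a $k$-dimensional complex torus, which is precisely hypothesis~(b) of Theorem~\ref{coframed} (if this $k$ is not the maximal one, one simply enlarges the coframing; only the existence of a coframing of this shape is used). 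Now apply Theorem~\ref{coframed}(ii): $X^{min}$ is projective, $K_{X^{min}}$ is nef, and $\dim X^{min}=3$ forces $n-k=3-k\leq 2\leq 3$, so hypothesis~(1) holds — equivalently, the fibres of $\pi$ are surfaces, and since $K_A=0$ the relative canonical $K_{X^{min}/A}=K_{X^{min}}$ restricts on each fibre $F$ to $K_F$, which is therefore nef and hence (classification of surfaces) semiample, i.e.\ pseudoample, so hypothesis~(3) holds as well. Consequently $X^{min}$ is a Pseudo-Abelian variety, which is the assertion.

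There is essentially no hard step left once Theorems~\ref{h-s} and~\ref{coframed} are in hand: the proof is a dictionary, identifying the smooth Albanese-type fibration produced by Hao--Schreieder with a special case of the coframed fibrations of Theorem~\ref{coframed}, and noting that the numerical bound $n-k\leq 3$ is automatic for threefolds. One could try to bypass Theorem~\ref{coframed} and argue directly from part~(3) of Theorem~\ref{h-s}, since the finite étale cover $X'\cong A'\times S'$ with $A'\to A$ an isogeny exhibits $X^{min}$ as a variety isogenous to a $k$-torus product; but upgrading "isogenous to a torus product" to "Pseudo-Abelian" still requires showing that the Galois group of the cover acts on the torus factor by translations, and that in turn needs the genuinely holomorphic fibre-bundle structure of $\pi$ (hence the birational isotriviality input of Taji used inside the proof of Theorem~\ref{coframed}). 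So the route through Theorem~\ref{coframed} is the most economical one, and the one point worth double-checking carefully is the reduction of the surface fibres to the good-minimal-model/abundance case underlying hypothesis~(1).
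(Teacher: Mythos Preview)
Your proof is correct, but it takes a genuinely different route from the paper's, and your final paragraph misjudges the alternative.

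The paper does \emph{not} go through Theorem~\ref{coframed} at all. It works directly from part~(3) of Theorem~\ref{h-s}: the finite \'etale cover $X'\cong A'\times S'$ together with the finite \'etale map $A'\to A$. The key elementary observation is that a finite \'etale map between abelian varieties is an isogeny, so its Galois group $G$ is a finite subgroup of $A'$ acting by translations. Once one knows that the pullback of $X^{min}\to A$ along $A'\to A$ is the product $A'\times S'$, one gets $X^{min}=(A'\times S')/G$ with $G$ acting on the torus factor by translations --- which is the definition of Pseudo-Abelian. No appeal to Taji's birational isotriviality is needed: Hao--Schreieder's part~(3) already hands you the product structure, so the only issue is identifying how $G$ acts on $A'$, and that is automatic for an isogeny.

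Your approach instead feeds only part~(2) (the smooth map $\pi\colon X^{min}\to A$) into Theorem~\ref{coframed}, and lets that theorem (via Taji and Wavrik) reconstruct the holomorphic bundle structure and the Pseudo-Abelian conclusion. This is logically sound and has the virtue of illustrating Theorem~\ref{coframed} in action, but it is heavier: you are re-deriving isotriviality that Hao--Schreieder already established in proving~(3). So your remark that the direct route through~(3) ``still requires\dots\ the birational isotriviality input of Taji'' is not accurate --- the translation structure on $G$ comes for free from the fact that $A'\to A$ is a map of tori, and this is exactly what the paper exploits.

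One small point: Theorem~\ref{coframed} as stated assumes $k$ is the \emph{maximal} rank of a trivial subbundle, whereas your $k=\dim A$ need not be maximal. You note correctly that the proof of Theorem~\ref{coframed} does not actually use maximality, only the existence of a smooth map to a $k$-torus; but strictly speaking you are invoking the proof rather than the statement.
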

 
 \begin{proof}
 
   We are in case (3), and obviously $A'$ acts on the product $A' \times S'$. 
   
   Since the map $A' \ra A$ is finite and unramified,  it is    a  quotient map,  with Galois group $G$,
 a finite Abelian group of translations of  $A'$.

 We have  $ A' \times S' \ra X^{min} \ra A$, and since $ A' \times S' \ra A'$ is a smooth fibration with fibre $S'$, 
 hence also $X^{min} \ra A$ is a smooth fibration with fibre $S'$.

 Since the pull back of $X^{min} \ra A$ to $A'$ is isomorphic to the product $ A' \times S'$, we conclude 
 that  $A'$ acts on $X^{min} $
 and $ X^{min} = (A' \times S') /G$, where $G$ acts on $A'$ by translations.
 \end{proof}

 \begin{remark}
 Let $X = (A_1 \times A_2 \times C)/G $, where $A_1, A_2$ are elliptic curves,
  $G$ acts on $A_1, A_2, C$, with a  free action  on $A_1$, while  $A_2/G$ and  $C/G$ have genus $0$.
 
 Then $X$ is a MITP of order $k=2$, but it is a Pseudo-Abelian Variety of order $1$,
 such that $h^0(\Omega^1_X) = 1,$ and property (A) is satisfied.
 
 \end{remark}

  The historical conclusion that we can draw is that Baldassarri's paper, even if   vitiated by the `original sin' 
(trying to extend results which were not correct already in small dimension $n=2,3$)
poses some  problems
which, in spite of the tremendous substantial and technical progress which took place in the last 60 or more years,  are still  open and challenging.

The typical  example is  the question of describing the $k$-co-framed manifolds $X$ (see Proposition \ref{nozero}
 and Example \ref{algcoframed}).

 \section{Manifolds with vanishing  Chern numbers}
  
 The title of this section is on purpose ambiguous: one may ask about Manifolds for which  certain Chern numbers
 vanish, or all the way consider Manifolds for which all the Chern numbers are equal to zero.
 
 \medskip
 
 We have given the example of MITP, Manifolds $X$ Isogenous to a k-Torus Product, as a prototype of manifolds
 with all the Chern numbers  equal to zero. 
 
 We have also observed that, at least in dimension 2, 
    all the surfaces with 
 all the Chern numbers $c_1^2= c_2=0$, or the minimal surfaces with $c_2=0$,  in view of  Theorem \ref{dantoni},
 are  MITP' s , if $K_S$ is nef, or birational to  a MITP   if $S$ is ruled.

 Because if  $S$ is minimal and not elliptic ruled,  there exists a Galois \'etale covering $S' \ra S$ such that
 $S' \cong T \times Y$, where $T$ is a complex torus with $ dim(T) >0$.
 
 More generally, we have the class of manifolds $X$  isogenous to
 a partially cotangentially framed manifold $X$.
 
 If we go up to dimension 3, there are three Chern numbers,  $c_1^3,  c_3, $ and $c_1 c_2 = \chi(\hol_X)$.
 
 A recent result by Hao and Schreieder goes in the direction of answering the question, \cite{haoschreiederbmy}
 in the  crucial \footnote{ As explained in loc. cit., the equality implies that $X$ is not of general type and   holds trivially for Kodaira dimension $\leq n-3$; while in  the case of Kodaira dimension $n-2$  the general fibre of the Iitaka fibration is a surface in
 one of the four classes of surfaces of Kodaira dimension $0$, and the situation is clear.} case where the Kodaira dimension is $n-1$ (extending part (1) of Theorem \ref{1-dantoni}):
 
 \begin{theorem}\label{hsbmy} {\bf (Hao-Schreieder)}
 Let $X$ be a minimal model with $dim(X)=n$ and Kodaira dimension $n-1$.
 
 Then $c_1^{n-2} c_2(X)=0$ if and only if $X$ is birational to a quotient $Z = (E \times Y)/G$,
 where
 
 (1) $Z$ has canonical singularities;
 
 (2) $E$ is an elliptic curve and $Y$ is a normal projective variety with $K_Y$ ample;
 
 (3) $G$ acts diagonally, faithfully on each factor, and freely in codimension two on $E \times Y$.
 
 \end{theorem}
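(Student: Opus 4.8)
The plan is to treat the two implications separately; the ``$\Leftarrow$'' direction is Chern-number bookkeeping, while the converse carries the real content and is where an analytic Bogomolov--Miyaoka--Yau type input (from Hao--Schreieder \cite{haoschreiederbmy}) is indispensable.

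\textbf{The direction $\Leftarrow$.} Suppose $X$ is birational to $Z=(E\times Y)/G$ with properties (1)--(3), and write $q\colon E\times Y\to Y$ for the second projection. Since $K_E=0$ we have $\omega_{E\times Y}=q^*\omega_Y$ and $c(E\times Y)=q^*c(Y)$, in particular $c_2(E\times Y)=q^*c_2(Y)$. Hence
\[
K_{E\times Y}^{\,n-2}\cdot c_2(E\times Y)=q^*\!\bigl(K_Y^{\,n-2}\cdot c_2(Y)\bigr)=0,
\]
because $K_Y^{\,n-2}\cdot c_2(Y)$ is a cycle class of codimension $n$ on the $(n-1)$-dimensional variety $Y$. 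As $G$ acts freely in codimension two, $Z$ has (canonical quotient) singularities only in codimension $\ge 3$, $K_Z$ is nef since it is the descent of the semiample class $q^*K_Y$, and the orbifold characteristic numbers of $Z$ are $|G|^{-1}$ times those of $E\times Y$; thus $c_1^{\,n-2}c_2(Z)=0$ (equivalently $K_Z^{\,n-2}\cdot c_2(Z)=0$). Since $c_1^{\,n-2}c_2$ is a birational invariant among minimal models and both $X$ and a minimal model of $Z$ are minimal models in the same birational class, $c_1^{\,n-2}c_2(X)=0$.

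\textbf{The direction $\Rightarrow$: the Iitaka fibration.} Now assume $c_1^{\,n-2}c_2(X)=0$. Since $\kappa(X)=n-1$, the Iitaka fibration $f\colon X\dashrightarrow W$ has $\dim W=n-1$, so its general fibre is a smooth curve of Kodaira dimension $0$, i.e.\ an elliptic curve $E_0$. Choosing a model on which $f$ is a morphism, the canonical bundle formula for elliptic fibrations (Kodaira for surfaces; Kawamata, Fujino--Mori in general) gives $K_X\sim_{\QQ}f^*(K_W+B+M)$, where $B\ge 0$ is the discriminant part recording the degenerate and multiple fibres and $M$ is the nef moduli $\QQ$-divisor, whose numerical class is governed by the $j$-invariant map $W\dashrightarrow\overline{\MMM}_{1,1}\cong\PP^1$; up to the standard factor $\tfrac1{12}$ one has, on a relatively minimal elliptic surface, that $M$ is the pull-back of the ample generator under $j$, so $M\equiv 0$ forces $j$ to be constant. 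The heart of the proof is then a Bogomolov--Miyaoka--Yau type inequality on the minimal model $X$ (this is exactly the content of \cite{haoschreiederbmy}): the second Chern class detects, through the curvature of the Hodge line bundle $f_*\omega_{X/W}$, the variation of the elliptic fibration, yielding a bound of the shape
\[
K_X^{\,n-2}\cdot c_2(X)\ \ge\ \lambda_n\,K_X^{\,n-2}\cdot f^*M\ \ge\ 0
\]
with $\lambda_n>0$. Our hypothesis makes both sides vanish; since $K_X\equiv f^*(K_W+B+M)$ is nef and big on the $(n-1)$-dimensional base, equality propagates to force $M\equiv 0$, hence $j$ constant, i.e.\ isotriviality of the fibration, and vanishing of the remaining error terms forces it to become a \emph{smooth} elliptic fibration after a finite base change (the would-be contributions of $B$ cleared by semistable reduction). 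Extracting from the single scalar identity $c_1^{\,n-2}c_2(X)=0$ both numerical triviality of the moduli part and enough control of the degenerate fibres to reach isotriviality with smooth reduction is, I expect, the main obstacle.

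\textbf{Trivialization and conclusion.} A smooth isotrivial elliptic fibration has monodromy contained in $\mathrm{Aut}(E_0)\ltimes E_0$ with finite image, so there is a finite surjection $W'\to W$, Galois with group $\Gamma$, such that $X\times_W W'$ is birational to $E_0\times W'$. Quotienting $\Gamma$ by the kernel of $\Gamma\to\mathrm{Aut}(E_0)$ and replacing $W'$ by the corresponding intermediate cover yields a finite group $G$ acting diagonally on $E_0\times W'$, faithfully on $E_0$ and (by construction) faithfully on $W'$, with $X$ birational to $(E_0\times W')/G$. Since $\kappa(X)=n-1=\dim W$ and $M\equiv 0$, the canonical bundle formula shows $K_{W'}$ together with the residual boundary is big; running the minimal model program on the base and passing to its canonical model produces a normal projective $Y$ with $K_Y$ ample and canonical singularities, to which the product and the $G$-action descend, giving $Z=(E_0\times Y)/G$ birational to $X$ with $K_Z$ nef. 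Finally, if some $g\in G\smallsetminus\{1\}$ fixed a divisor on $E_0\times Y$, then $Z$ would acquire codimension-two quotient singularities; these are crepant-resolvable, and the resolution (still a smooth minimal model birational to $X$) would then have a strictly positive extra contribution to $c_1^{\,n-2}c_2$, contradicting the birational invariance established above together with $c_1^{\,n-2}c_2(X)=0$. Hence $G$ acts freely in codimension two, and $Z$ has all the asserted properties.
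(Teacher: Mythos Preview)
The paper does not contain a proof of this statement: Theorem~\ref{hsbmy} is quoted verbatim as a result of Hao and Schreieder \cite{haoschreiederbmy} and is invoked only as background for the discussion of threefolds with vanishing Chern numbers. There is therefore no ``paper's own proof'' against which to compare your attempt.

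That said, your write-up is an outline rather than a proof, and the gap is exactly where you yourself locate it. In the $\Rightarrow$ direction you assert an inequality of the form
\[
K_X^{\,n-2}\cdot c_2(X)\ \ge\ \lambda_n\,K_X^{\,n-2}\cdot f^*M\ \ge\ 0
\]
and attribute it to \cite{haoschreiederbmy}; but that inequality (and the analysis of the equality case) \emph{is} the theorem you are trying to prove. Invoking it makes the argument circular. You acknowledge as much when you write that extracting both $M\equiv 0$ and control of the degenerate fibres from the single scalar $c_1^{\,n-2}c_2(X)=0$ ``is, I expect, the main obstacle'': indeed it is, and nothing in your text addresses it. The actual Hao--Schreieder argument requires a delicate orbifold Miyaoka--Yau computation on the minimal model (building on Miyaoka's generic semipositivity and its equality analysis), and this cannot be replaced by bookkeeping with the canonical bundle formula alone.

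The $\Leftarrow$ direction is closer to complete, but note two points you pass over. First, $Y$ is only normal with $K_Y$ ample, so $c_2(Y)$ and the claimed identity $c(E\times Y)=q^*c(Y)$ need to be interpreted in an appropriate (orbifold or reflexive-sheaf) sense; this is not automatic. Second, the birational invariance of $c_1^{\,n-2}c_2$ among minimal models with canonical singularities is true but deserves a reference or a line of justification, since it underpins both directions of your argument.
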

 
 Now, the case where $X$ is a threefold of general type (in this case it can be $c_1 c_2=0$, as shown by Ein and Lazarsfeld,
 \cite{el}), is excluded if $X$ is minimal, since then $c_1^3 >0$. 
 
 Hence the missing cases are the cases of Kodaira dimension $0$ and $1$. For Kodaira dimension $0$,
 $c_1(X) =0 \in H^2(X, \QQ)$, hence remains to see what happens for $c_3=0$.
 Some examples of simply connected Calabi-Yau threefolds with $c_3=0$ have been constructed by 
 Chad Schoen \cite{schoen} and other examples were later found by Volker Braun \cite{braun}
 (the latter as hypersurfaces in a toric fourfold).
 
 \subsection{The Schoen Calabi-Yau threefolds $X$ with $c_3(X)=0$}
 We want to discuss now the former examples by Schoen, and show some partial results
 which seem to indicate that they should not 
 be birational to a quotient of a torus product.

 These examples are constructed  as small resolutions of fibre products $ X = S_1 \times_{\PP^1} S_2$
 where $ f_i : S_i \ra \PP^1$ is a rational elliptic surface with a section. Let $ f :X \ra \PP^1$ be the fibre product of $f_1$ and $f_2$.
 
 We are interested in the special case where the critical values of $f_1, f_2$ are different;
 then the fibre product $X$ is smooth, and all the fibres $F$ of $f$ are either a product of two elliptic curves,
 or the product of an elliptic curve with a degenerate fibre. Hence all the fibres $F$ have Euler number 
 $ e(F)=0$, and $ c_3(X) = e (X) =0$.
 
 The canonical divisor of $X$ is trivial, for instance we can take the $f_i = \frac{F_i}{G_i}$ to be given by a pencil of plane cubics
 with simple base points:
 then $X$ is the small resolution of  a hypersurface of bidegree $(3,3)$ $X' \subset \PP^2 \times \PP^2$, 
 $$ X' = \{ (x,y) | F_1 (x) G_2(y) =  F_2 (y) G_1(x) \} $$
 hence $X' $  and $X$ have trivial canonical divisor (see \cite{schoen} page 181). Moreover, essentially 
 by the hyperplane theorem of Lefschetz, $X$ is simply connected (see at any rate (2.1) of \cite{schoen}, page 181).
 
 Thus $X$ is a Calabi-Yau threefold with $c_3(X)=0$.

 \begin{proposition}\label{schoenstrong}
 The Schoen threefolds $X$ (with $K_X$ trivial and  $c_3(X)=0$)
 cannot be birationally  covered by a 1-dimensional family of subvarieties 
 which are isomorphic to a fixed Abelian surface  $T$.

 \end{proposition}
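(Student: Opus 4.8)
The plan is to derive a contradiction from the existence of a dominant rational map $\psi : B \times T \dashrightarrow X$ (where $B$ is a curve and $T$ a fixed abelian surface), by comparing the Albanese/$H^1$ data of the two sides. Since $X$ is a Schoen threefold it is simply connected (as noted in the excerpt, by the Lefschetz hyperplane theorem applied to the bidegree $(3,3)$ hypersurface $X'$), so $H^1(X,\QQ) = 0$ and $\Alb(X) = 0$; in particular $H^0(\Omega^1_X) = 0$. First I would make the covering family honest: replacing $B$ by a resolution and $B \times T$ by a suitable smooth model $W$ resolving the indeterminacy of $\psi$, we get a surjective morphism $p : W \to X$ from a smooth projective threefold $W$ which is birational to $B \times T$, together with a surjective morphism $\pi : W \to B$ whose general fibre is birational to $T$, hence (being a smooth minimal surface of Kodaira dimension $0$ with $K$ numerically trivial in codimension considerations) has an abelian surface as its minimal model, in fact isomorphic to $T$ up to isogeny on a Zariski-dense open set of $B$.

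The key step is a positivity/differential-forms obstruction. On $W \sim B \times T$ the space $H^0(W, \Omega^2_W)$ is nontrivial: pulling back the translation-invariant holomorphic $2$-form $\eta$ on $T$ (which is a birational invariant of smooth projective models, since $H^0(\Omega^2)$ is a birational invariant) gives a nonzero $\eta_W \in H^0(W,\Omega^2_W)$, and this form is \emph{nowhere vanishing} on the open locus $W^\circ$ where $W \to B\times T$ is an isomorphism, in particular it has no zeros along a general fibre of $\pi$ and its restriction to such a fibre is the nowhere-zero form on the abelian surface. Now I would push this down to $X$: for a general point $x \in X$ the fibre $p^{-1}(x)$ is finite, and away from the branch/exceptional loci $p$ is étale, so $\eta_W$ does \emph{not} descend directly; instead I would use that $X$ is a Calabi–Yau threefold, so $H^0(\Omega^2_X) = h^{1,2}$-free in the sense that $h^{2,0}(X) = 0$ — indeed $h^{p,0}(X) = 0$ for $0 < p < 3$ for a simply connected Calabi–Yau threefold. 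The tension to exploit: a dominant rational map $B \times T \dashrightarrow X$ forces, by the functoriality of holomorphic forms under dominant rational maps of smooth projective varieties, an injection $H^0(X, \Omega^2_X) \hookleftarrow$ — more precisely one gets information the \emph{other} way, and the honest obstruction is on $H^1$: a general fibre $F \cong T$ of $\pi$ maps finitely to $X$, giving a nonconstant map $T \to X$ up to the quotient, and since $X$ has trivial Albanese the composite $T \to \Alb(X) = 0$ kills the whole abelian surface, which is consistent; so the real contradiction must come from the transcendental Hodge structure.

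Therefore the decisive step, and the main obstacle, is the following Hodge-theoretic argument: the sub-Hodge-structure of $H^2$. On $B \times T$, $H^2 = H^2(T) \oplus (H^1(B)\otimes H^1(T)) \oplus H^2(B)$ and the piece $H^2(T)$ carries a weight-two Hodge structure with $h^{2,0} = 1$. A dominant rational map $B\times T \dashrightarrow X$ induces (after resolving, and because $H^{p,0}$ and more generally the transcendental part behave well under dominant maps and correspondences) a surjection of rational Hodge structures from a Hodge structure containing $H^2_{tr}(T)$ as a direct summand onto $H^2_{tr}(X)$, hence $H^2_{tr}(X)$ is a quotient (equivalently, by polarizability, a sub) of $H^2_{tr}(B\times T)$. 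But $H^2(B \times T)$ has $h^{2,0} = 1$ with the $(2,0)$-part supported entirely on the $H^2(T)$ summand, so if $H^2_{tr}(X)$ received the $(2,0)$-class it would force $h^{2,0}(X) \le 1$ — which is true for Schoen threefolds! So this does not immediately contradict. The resolution, which I expect to be the technically hardest point, is to argue instead that the \emph{family} structure is incompatible: the covering family of abelian surfaces would be an \emph{isotrivial} family of abelian surfaces sweeping out $X$, hence would produce, via the relative $H^1$ of the family, a nonzero constant sub-variation of Hodge structure of weight one and rank $4$ inside (a summand of) $H^3(X)$ or force a nonzero global $1$-form on a finite étale cover of $X$; but Schoen's threefold is simply connected, so it has no nontrivial finite étale covers and no global holomorphic $1$-forms, and one checks that an isotrivial sweeping family of abelian surfaces on a simply connected threefold with $\chi(\mathcal O_X) = 0$ is impossible by tracking $c_2$: the fibres $T$ satisfy $c_2(T) = 0$, and comparing with $c_2(X) \cdot H \ne 0$ for an ample $H$ on the Calabi–Yau $X$ (since $c_2(X)$ is effective-like and nonzero by Miyaoka's pseudo-effectivity of $c_2$ for non-uniruled minimal threefolds, and is nonzero because $X$ has no étale cover splitting off a torus factor) gives the contradiction. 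I would organize the final writeup around this last numerical/Miyaoka--$c_2$ comparison, as it is the cleanest way to close the argument and avoids delicate claims about how $H^0(\Omega^2)$ transforms under the rational quotient map.
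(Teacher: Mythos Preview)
Your proposal does not close, and the gap is structural rather than cosmetic. Each of the approaches you sketch either points in the wrong direction or relies on an unjustified ``one checks''. The pullback of forms under a dominant rational map $B\times T \dashrightarrow X$ gives an injection $H^0(X,\Omega^p_X)\hookrightarrow H^0(B\times T,\Omega^p_{B\times T})$, so the vanishing $h^{2,0}(X)=0$ is perfectly compatible with $h^{2,0}(B\times T)=1$; you noticed this yourself and retreated. The transcendental-$H^2$ comparison likewise yields no contradiction, as you also observe. Your final move, the $c_2$ argument, is where the real gap lies: the fact that each abelian surface $T$ in the covering family satisfies $c_2(T)=0$ tells you nothing about $c_2(X)\cdot H$. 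The $T_b$ are \emph{subvarieties} of $X$, not fibres of a morphism \emph{from} $X$, so there is no adjunction or relative tangent sequence linking $c_2(T_b)$ to $c_2(X)$. Miyaoka pseudo-effectivity of $c_2(X)$ is true but gives you nothing to compare against. Similarly, the ``relative $H^1$ of the family'' lives over the base $B$ of the family on the resolution $W$, not over $X$, so it does not produce a sub-VHS inside $H^3(X)$.

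The paper's proof is entirely different and much more concrete: it uses the defining structure of the Schoen threefold, namely the fibration $f:X\to\PP^1$ obtained as the fibre product of two rational elliptic surfaces $f_i:S_i\to\PP^1$ with non-constant $j$-invariant. One argues that a general member $T_b$ of the covering family cannot lie in a fibre of $f$ (since the fibres $E_{1,t}\times E_{2,t}$ have varying moduli), hence $f|_{T_b}:T_b\to\PP^1$ is surjective. But any morphism from an abelian surface to $\PP^1$ has fibres that are unions of translates of a fixed elliptic curve $E\subset T$; since $E$ corresponds to a sublattice of $H_1(T,\ZZ)$ it is independent of $b$. This forces every fibre of $f$ to contain a curve isomorphic to the fixed elliptic curve $E$, contradicting the fact that the general fibres of $f_1$ and $f_2$ are not isogenous to any fixed elliptic curve. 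The argument is three lines of geometry specific to $X$, with no Hodge theory or Chern-class bookkeeping needed; your cohomological strategy ignores this structure and, as written, cannot be salvaged without a genuinely new idea.
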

 
 \begin{proof}
Assume that $X$ is birationally covered by a family  $T \times C$ (where by the way $C =\PP^1$
 since $ q(X)=0$). 
 
 For a general $b \in C$,  $T_b := T \times \{b\}$ cannot map to a fibre of $ f :X \ra \PP^1$, since $f$ is the fibre product of $f_1$ and $f_2$
 and we may assume that  the fibrations $f_i$ do not have constant  moduli.
 
 Hence $T_b$, which is a subvariety of $X$,  dominates $\PP^1$ through the morphism $f$. 
 
 But a linear system of dimension one on an Abelian surface
 yields a morphism to $\PP^1$ only if the system is not ample, that is, the fibre is a union of translates of an 
 elliptic curve $E \subset T$. Varying $b$, the elliptic curve $E$ is fixed (since it corresponds to a subgroup of
 the first homology group pf $T$), therefore all the fibres of $f$ contain an elliptic curve isomorphic to $E$.
 This is a contradiction, as the general fibres of $f_1$ and $f_2$ are not even  isogenous to a fixed elliptic curve $E$.
  
 \end{proof}

  \begin{proposition}\label{schoen}
  The Schoen threefolds $X$ do not admit a fibration $\psi : X \ra S$ onto a surface  $S$ such that the general fibre
 is isomorphic to a fixed  elliptic curve $E$.

 \end{proposition}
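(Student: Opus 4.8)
The plan is to assume such a fibration $\psi : X \to S$ exists and to restrict it to the general fibre $F_t = A_t \times B_t$ of the Schoen fibration $f : X \to \PP^1$, where $A_t := (S_1)_t$ and $B_t := (S_2)_t$ are the elliptic fibres of $f_1$ and $f_2$ over $t$. By the very construction of the Schoen threefolds the moduli of $A_t$ and $B_t$ are non-constant (as already used in the proof of Proposition \ref{schoenstrong}), and $X$ is a simply connected Calabi--Yau threefold. After replacing $S$ by the Stein factorization of $\psi$ I may assume the fibres of $\psi$ are connected; then $\pi_1(S)$ is a quotient of $\pi_1(X)$, so $S$ is simply connected. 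I then distinguish two cases, according to whether $\psi|_{F_t} : A_t \times B_t \to S$ is generically finite for general $t$ or not; the second possibility is equivalent to the general fibre $E_s \cong E$ of $\psi$ being contained in a fibre of $f$.

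In the ``vertical'' case $f$ is constant on the general fibres of $\psi$, hence factors as $f = g \circ \psi$ with $g : S \dasharrow \PP^1$ dominant, and for general $t$ the map $\psi$ restricts to a surjection $\psi_t : A_t \times B_t \to C_t := g^{-1}(t)$ from an abelian surface onto a curve, whose general fibre is $\cong E$. Taking the Stein factorization of $\psi_t$ and invoking the universal property of the abelian surface $A_t \times B_t$ — a morphism with connected fibres from it onto a curve is, up to translation, the quotient homomorphism onto a $1$-dimensional abelian quotient, with fibres the translates of a sub-elliptic curve — I produce a sub-elliptic curve $E' \subset A_t \times B_t$ isomorphic to $E$. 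Since $E'$ projects isogenously onto $A_t$ or onto $B_t$, the fixed curve $E$ would be isogenous to $A_t$ or to $B_t$ for general $t$; but for a fixed $E$ this isogeny condition holds for only countably many $t$, contradicting the non-constancy of $j(A_t)$ and $j(B_t)$. This step is a direct analogue of the argument in Proposition \ref{schoenstrong}.

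In the ``dominant'' case $\psi|_{F_t} : A_t \times B_t \to S$ is generically finite and dominant for general $t$, hence finite (an abelian surface has no curve of negative self-intersection to contract). From $0 = \kappa(A_t \times B_t) \ge \kappa(S)$ together with $\pi_1(S) = 1$, the Enriques--Kodaira classification forces $S$ to be either rational or a K3 surface. If $S$ were a K3, then $K_S = 0$ and Riemann--Hurwitz would make $\psi|_{F_t}$ étale, hence an isomorphism since $S$ is simply connected, which is absurd as $A_t \times B_t$ is not a K3. If $S$ is rational, I consider the product map $(\psi,f) : X \dasharrow S \times \PP^1$: on a general $F_t$ it is $(\psi|_{F_t}, t)$, which is generically finite, so $(\psi,f)$ is generically finite and dominant and $\mathbb{C}(X)$ is a finite extension of the purely transcendental field $\mathbb{C}(S)(t)$; thus $X$ is unirational, contradicting $H^0(X, \omega_X) = \mathbb{C}$.

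The heart of the matter — and the step that is not a routine adaptation of Proposition \ref{schoenstrong} — is the dominant case: a single abelian surface can perfectly well finitely cover a rational surface such as $\PP^1 \times \PP^1$, so one cannot reach a contradiction fibre by fibre; it appears only after using the entire pencil of abelian surfaces $A_t \times B_t$ sweeping out $X$, via the generically finite map $(\psi,f) : X \dasharrow S \times \PP^1$, to see that $X$ itself would become unirational. The elementary inputs feeding the whole argument are the simple connectedness of $X$ (to control $\pi_1(S)$) and the non-constancy of the moduli of $f_1$ and $f_2$.
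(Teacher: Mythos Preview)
Your vertical case is fine and parallels Proposition~\ref{schoenstrong}, and the K3 subcase of the dominant case is essentially correct. The rational subcase, however, contains a fatal error: from the generically finite dominant morphism $(\psi,f)\colon X\to S\times\PP^1$ you conclude that $\mathbb{C}(X)$ is a finite extension of $\mathbb{C}(S)(t)$ and then assert that $X$ is unirational. This is the wrong direction. A finite extension $\mathbb{C}(S)(t)\hookrightarrow\mathbb{C}(X)$ means that $X$ \emph{dominates} a rational threefold, not that it is dominated by one; by Noether normalization every projective threefold admits a generically finite map to $\PP^3$, so this carries no information. Unirationality would require an inclusion $\mathbb{C}(X)\hookrightarrow\mathbb{C}(t_1,t_2,t_3)$, which you have not produced. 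Thus the contradiction with $H^0(X,\omega_X)\ne 0$ never materializes, and the rational case---which, as you yourself note, is the heart of the matter---remains open. I do not see an easy repair along your lines: an abelian surface can genuinely be a finite cover of a rational surface, and letting $t$ vary does not obviously obstruct this.

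For comparison, the paper's proof takes a completely different route and never invokes the Schoen fibration $f$ at all. It argues purely with fundamental groups: after removing a codimension-two locus one still has $\pi_1(X^*)=\pi_1(S^*)=1$, and from the homotopy sequence $\pi_1(E)\to\pi_1(X^*)\to\pi_1(S^*)\to 1$ it suffices to show that the image of $\pi_1(E)$ is infinite. This follows because the local monodromies of the isotrivial elliptic fibration around the discriminant act on $E$ by translations (seen by restricting to a general curve section $C\subset S$, where the fibred surface is of the form $(C'\times E)/G$ and smoothness forces the isotropy groups to act freely, hence by translation, on $E$), so they act trivially on $H_1(E,\ZZ)$. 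This monodromy argument handles all cases uniformly and avoids the rational/K3 dichotomy entirely.
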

 \begin{proof}
 Since $X$ is simply connected, and the general fibre of $\psi$  is connected, it follows that $S$ is also simply connected.
 
 Let $D$ be the divisorial part of the set of critical values  of $\psi$, and let $D^*$ be its smooth locus: define 
 $$ S^* : = (S \setminus Sing(D)), \ X^* = \psi^{-1} (S^*).$$
 
 Then we have an exact sequence
 $$ \pi_1(E) \ra \pi_1(X^*) \ra \pi_1(S^*) \ra 1,$$
 and we observe that also $X^*, S^*$ are simply connected, since we have removed a subvariety of
 real codimension 4.
 
 The image of $\pi_1(E) \ra \pi_1(X^*)$ is the quotient of $\pi_1(E)$ by the local monodromies around the 
 irreducible components $D_j$ of $D^*$.
 
 To understand these local monodromies, take a general curve section $C$ of $S$. 
The inverse image of $C$ is an elliptic fibration $\Sigma \ra C$ 
 over $C$ such that all the smooth fibres are isomorphic to $E$.
 
 The fibration is isotrivial, hence $ \Sigma = (C' \times E) / G$, where $C' \ra C$ is Galois with group $G$. 
 
 $G$ acts on $C' \times E$ via a product action. If there are fixpoints for the action of $G$ on $C'$,
 then,  since the quotient is smooth, it follows that the isotropy subgroups act freely on $E$,
 hence by translations. So the local holomorphic monodromies are translations by torsion points,
 and the monodromy acts  trivially  on the first homology of $E$.
 
 The conclusion is that the image  of $\pi_1(E) \ra \pi_1(X^*)$ is an infinite group,
 and this is a contradiction since $ \pi_1(X^*)$ is trivial.

 \end{proof}
 
 The construction of Chad Schoen \cite{schoen} applied to  other elliptic fibrations leads to
 threefolds $X$ with Kodaira dimension 1, and $c_3(X)=0$. We have not yet investigated whether we can achieve with
 this construction trivial Chern number $c_1(X) c_2(X)=0$.
 
 \bigskip

{\bf Acknowledgements:} I would like to thank  Francesco Baldassarri for bringing the paper \cite{baldassarri} by Mario Baldassarri to our attention, thus raising my interest in these questions; he,   Ciro Ciliberto and Flaminio Flamini  provided me with the text of \cite{dantoni}. Thanks to Ciliberto   for a  quick but useful discussion, and  to 
Thomas Peternell for mentioning the article by Braun.

Many thanks to Matthias Sch\"utt for bringing the examples of  \cite{schoen} to my attention and explaining their key features.

Thanks to Pierre Deligne for answering an email query (see Remark \ref{geometrical}).

Thanks to Adriano Tomassini for pointing out Wang's reference \cite{wang}.

 Thanks to the referee for a careful reading of the article and for several helpful comments which helped
to  improve the exposition.

\end{document}